\newtheorem{theorem}{Theorem}[section]
\newtheorem{corollary}[theorem]{Corollary}
\newtheorem{lemma}[theorem]{Lemma}
\newtheorem{remark}[theorem]{Remark}
\def\R{\mathbb{R}}
\def\N{\mathbb{N}}
\def\NT{\mathcal N_T}
\newcommand{\Th}{\mathcal{T}_h}
\newcommand{\Nin}{\mathcal{N}_{\mathrm{in}}}
\newcommand{\Nc}{\mathcal{N}_{\mathrm{c}}}
\newcommand{\Ns}{\mathcal{N}_{\mathrm{s}}}
\newcommand{\NN}{\mathcal{N}}
\newcommand{\node}{n}
\begin{document}
\title{Anisotropic mesh refinement in polyhedral domains:
error estimates with data in $L^2(\Omega)$}

\author{Thomas Apel\thanks{Institut f\"ur Mathematik und Bauinformatik,
    Universit\"at der Bundeswehr M\"unchen, Germany.
    \texttt{thomas.apel@unibw.de}} \and Ariel L.
  Lombardi\thanks{Departamento de Matem\'atica, Universidad de Buenos
    Aires, and Instituto de Ciencias, Universidad Nacional de General
    Sarmiento. Member of CONICET Argentina. \texttt{aldoc7@dm.uba.ar}}
  \and Max Winkler\thanks{Institut f\"ur Mathematik und Bauinformatik,
    Universit\"at der Bundeswehr M\"unchen, Germany.
    \texttt{max.winkler@unibw.de}}}

\maketitle

\begin{footnotesize}
\noindent{{\bf Abstract.} The paper is concerned with the
finite element solution of the Poisson equation with homogeneous
Dirichlet boundary condition in a three-dimensional domain.
Anisotropic, graded meshes from a former paper are reused for dealing
with the singular behaviour of the solution in the vicinity of the
non-smooth parts of the boundary. The discretization error is analyzed
for the piecewise linear approximation in the $H^1(\Omega)$- and
$L^2(\Omega)$-norms by using a new quasi-interpolation operator. This
new interpolant is introduced in order to prove the estimates for
$L^2(\Omega)$-data in the differential equation which is not possible
for the standard nodal interpolant. These new estimates allow for the
extension of certain error estimates for optimal control problems with
elliptic partial differential equation and for a simpler proof of the discrete
compactness property for edge elements of any order on this kind of
finite element meshes.}

\medskip

\noindent{{\bf Key words.} Elliptic boundary value problem,
edge and vertex singularities, finite element method, anisotropic mesh
grading, optimal control problem, discrete compactness property.}

\medskip

\noindent{{\bf AMS subject classifications.} 65N30.}
\end{footnotesize}

\section{Introduction}

We consider the homogeneous Dirichlet problem for the Laplace equation,
\begin{equation}\label{eq:poissonproblem}
  -\Delta u=f \quad\mbox{in }\Omega, \qquad
  u=0 \quad\mbox{on }\partial\Omega,
\end{equation}
where $\Omega$ is a polyhedral domain. Note that we could consider a
more general elliptic equation of second order. But by a linear change
of the independent variables the main part of the differential
operator could be transformed to the Laplace operator in another
polyhedral domain such that it is sufficient to consider the Laplace
operator here.

The aim of the paper is to prove the discretization error estimate
\begin{equation}\label{eq:aim}
  \|u-u_h\|_{H^1(\Omega)}\le Ch \|f\|_{L^2(\Omega)}
\end{equation}
for the finite element solution $u_h\in V_h$ which is constructed by
using piecewise linear and continuous functions on a family of
appropriate finite element meshes $\Th$. Note that we assume here not
more than $f\in L^2(\Omega)$ such that the $L^2$-error estimate
\begin{equation}\label{eq:aim2}
  \|u-u_h\|_{L^2(\Omega)}\le Ch^2 \|f\|_{L^2(\Omega)}
\end{equation}
follows by the Aubin--Nitsche method immediately. The generic constant
$C$ may have different values on each occurrence.

If the solution of the boundary value problem
\eqref{eq:poissonproblem} was in $H^2(\Omega)$ then the finite element
meshes could be chosen quasi-uniform, and the error estimates
\eqref{eq:aim} and \eqref{eq:aim2} would be standard. However, if the
domain $\Omega$ is non-convex, the solution will in general contain
vertex and edge singularities, that means $u\not\in H^2(\Omega)$. In
this case the convergence order is reduced in comparison with
\eqref{eq:aim} and \eqref{eq:aim2} when quasi-uniform meshes are used.
As a remedy, we focus here on a priori anisotropic mesh grading
techniques as they were investigated by Apel and Nicaise in
\cite{apel:97c}. In comparison with isotropic local mesh refinement,
the use of anisotropic elements avoids an unnecessary refinement along
the edges.

The estimate \eqref{eq:aim} is in general proven by using the C\'ea
lemma (or the best approximation property of the finite element
method),
\begin{equation}\label{eq:cea}
  \|u-u_h\|_{H^1(\Omega)}\le C\inf_{v_h\in V_h}\|u-v_h\|_{H^1(\Omega)},
\end{equation}
and by proving an interpolation error estimate as an upper bound for
the right-hand side of \eqref{eq:cea}. The particular difficulty is
that when the Lagrange interpolant is used together with anisotropic
mesh grading, then the local interpolation error estimate
\begin{equation}\label{eq:locestW2p}
  |u-I_hu|_{W^{1,p}(T)}\le h_T|u|_{W^{2,p}(T)}
\end{equation}
does not hold for $p=2$ but only for $p>2$, see \cite{apel:92}. Hence
the classical proof of a finite element error estimate via
\[
  \|u-u_h\|_{H^1(\Omega)}\le C \|u-I_hu\|_{H^1(\Omega)}\le C
  \left(\sum_{T\in\Th} h_T|u|_{H^2(T)}^2\right)^{1/2}
\]
does not work. This problem was overcome by Apel and Nicaise,
\cite{apel:97c}, by using \eqref{eq:locestW2p} and related estimates in
weighted spaces, as well as the H\"older inequality for the prize that
$f\in L^p(\Omega)$ with $p>2$ has to be assumed in problem
\eqref{eq:poissonproblem}. Hence estimate \eqref{eq:aim} cannot be
proved in this way.

For prismatic domains and tensor product type meshes the problem was
overcome in \cite{apel:97b,apel:08b} by proving local estimates for a certain
quasi-interpolation operator. This work cannot be easily extended to
general polyhedral domains since the orthogonality of certain edges of
the elements was used there. The aim of the current paper is to
construct a quasi-interpolation operator $D_h$ such that the error estimate
\begin{equation}\label{eq:aim3}
  \|u-D_hu\|_{H^1(\Omega)}\le Ch \|f\|_{L^2(\Omega)}
\end{equation}
can be proved for the anisotropic meshes introduced in \cite{apel:97c}.

Quasi-interpolants were introduced by Cl\'ement \cite{clement:75}. The
idea is to replace nodal values by certain averaged values such that
non-smooth functions can be interpolated. This original idea has been
modified by many authors since then. The contribution by Scott and
Zhang \cite{scott:90} was most influential to our work.

The plan of the paper is as follows. In Section \ref{sec:regularity}
we introduce notation, recall regularity results for the solution $u$
of \eqref{eq:poissonproblem} and describe the
finite element discretization. The main results are proved in Section
\ref{sec:mainresults}. The paper continues with numerical results in
Section \ref{sec:tests} and ends with two sections where we describe
applications which motivated us to improve the approximation result
from $\|u-u_h\|_{H^1(\Omega)}\le Ch \|f\|_{L^p(\Omega)}$, $p>2$, to
$\|u-u_h\|_{H^1(\Omega)}\le Ch \|f\|_{L^2(\Omega)}$. The first one is
a discretization of a distributed optimal control problem with
\eqref{eq:poissonproblem} as the state equation. The second
application consists in a simpler proof of the discrete compactness
property for edge elements of any order on this kind of finite element
meshes.

We finish this introduction by commenting on related work. The idea to
treat singularities due to a non-smooth boundary by using graded
finite element meshes is old. The two-dimensional case was
investigated by Oganesyan and Rukhovets \cite{oganesyan:68}, Babu\v
ska \cite{babuska:70a}, Raugel \cite{raugel:diss}, and Schatz and
Wahlbin \cite{schatz:79}. In three dimensions we can distinguish
isotropic mesh grading, see the papers by Apel and Heinrich
\cite{apel:94a} and Apel, S\"andig, and Whiteman \cite{apel:93b}, and
anisotropic mesh grading, see the already mentioned papers
\cite{apel:92,apel:97b,apel:08b} for the special case of prismatic
domains, and \cite{apel:97c} for general polyhedral domains. This work
has been extended by B\u{a}cu\c{t}\u{a}, Nistor, and Zikatanov
\cite{BacutaNistorZikatanov:07} to higher order finite element
approximations where naturally higher regularity of the right-hand
side $f$ has to be assumed. Boundary element methods with anisotropic,
graded meshes have been considered by von Petersdorff and Stephan
\cite{petersdorff:90a}.
The main alternative to mesh grading is augmenting the finite element
space with singular functions, see for example Strang and Fix
\cite{strang:73}, Blum and Dobrowolski \cite{blum:82}, or Assous,
Ciarlet Jr., and Segr\'e \cite{assous:00a} for various variants. It
works well in two dimensions where the coefficient in front of the
singular function is constant. In the case of edge singularities this
coefficient is a function which can be approximated, see Beagles and
Whiteman \cite{beagles:86}, or it can be treated by Fourier analysis,
see Lubuma and Nicaise \cite{lubuma:94b}.

\setcounter{equation}{0}
\section{\label{sec:regularity}Notation, regularity, discretization}

It is well known that the solution of the boundary value problem
\eqref{eq:poissonproblem} contains edge and vertex singularities which
are characterized by singular exponents. For each edge $e$, the
corresponding leading (smallest) singular exponent $\lambda_e$ is
simply defined by $\lambda_e=\pi/\omega_e$ 
where $\omega_e$ is the
interior dihedral angle at the edge $e$. For vertices $v$ of $\Omega$,
the leading singular exponent $\lambda_v>0$ has to be computed via the
eigenvalue problem of the Laplace-Beltrami operator on the
intersection of $\Omega$ and the unit sphere centered at $v$
. Note that
$\lambda_e>\frac12$ and $\lambda_v>0$. A vertex $v$ or an edge $e$
will be called \emph{singular} if $\lambda_v<\frac12$ or
$\lambda_e<1$, respectively.  We exclude the case that $\frac12$ is a
singular exponent of any vertex. For a detailed discussion of edge and 
vertex singularities we refer to \cite[Sections 2.5 and 2.6]{grisvard:92b}.

As in \cite{apel:97c} we subdivide the domain $\Omega$ into
a finite number of disjoint tetrahedral subdomains, subsequently
called \emph{macro-elements},
\[
\overline\Omega=\bigcup_{\ell=1}^L \overline{\Lambda_\ell}.
\]
We assume that each $\Lambda_\ell$ contains at most one singular edge
and at most one singular vertex. In the case that $\Lambda_\ell$
contains both a singular edge and a singular vertex, that vertex is
contained in that edge.  Note that the edges of $\Lambda_\ell$ are
considered to  have $O(1)$ length.  For $\ell_1\ne \ell_2$, the
closures of the macroelements $\Lambda_{\ell_1}$ and
$\Lambda_{\ell_2}$ may be disjoint or they intersect defining a
\emph{coupling face}, or a \emph{coupling edge}, or a \emph{coupling
  node}.  Denote by $\mathcal F_c$, $\mathcal E_c$ and $\mathcal N_c$
the sets of coupling faces, edges and nodes, respectively.

For the description of the regularity of the solution $u$ of
\eqref{eq:poissonproblem}, we set
$\lambda_{\mathrm{v}}^{(\ell)}=\lambda_v$ if the macro-element
$\Lambda_\ell$ contains the singular vertex $v$ of $\Omega$. If
$\Lambda_\ell$ does not contain any singular vertex we set
$\lambda_{\mathrm{v}}^{(\ell)}=+\infty$. Moreover, we set
$\lambda_{\mathrm{e}}^{(\ell)}=\lambda_e$ if $\Lambda_\ell$ contains
the singular edge $e$ of $\Omega$, otherwise we set
$\lambda_{\mathrm{e}}^{(\ell)}=+\infty$. Furthermore, we define in
each macro-element $\Lambda_\ell$ a Cartesian coordinate system
$x^{(\ell)}=(x_1^{(\ell)},x_2^{(\ell)},x_3^{(\ell)})$ such that the
singular vertex, if existing, is located in the origin, and the
singular edge, if existing, is contained in the $x_3^{(\ell)}$-axis.
We also introduce by 
\begin{align*}
r^{(\ell)}(x^{(\ell)}) &:= 
\left((x_1^{(\ell)})^2+(x_2^{(\ell)})^2\right)^{1/2}, \\
R^{(\ell)}(x^{(\ell)}) &:=
\left((x_1^{(\ell)})^2+(x_2^{(\ell)})^2+(x_3^{(\ell)})^2\right)^{1/2}, \\
\theta^{(\ell)}(x^{(\ell)})&:=
\frac{r^{(\ell)}(x^{(\ell)})}{R^{(\ell)}(x^{(\ell)})},
\end{align*}
the distance to the $x_3^{(\ell)}$-axis, the distance to the origin,
the angular distance from the $x_3^{(\ell)}$-axis, respectively.

For $k\in\N$ and $\beta,\delta\in\R$ we
define the weighted Sobolev space 
\[
  V^{k,2}_{\beta,\delta}(\Lambda_\ell):=\left\{ v\in \mathcal{D}'(\Lambda_\ell):
  \|v\|_{V^{k,2}_{\beta,\delta}(\Lambda_\ell)} < \infty \right\}
\]
where
\begin{align*}
  \|v\|_{V^{k,2}_{\beta,\delta}(\Lambda_\ell)}^2 &:= \sum_{|\alpha|\le k}
  \int_{\Lambda_\ell} \left|R^{\beta-k+|\alpha|} \theta^{\delta-k+|\alpha|}
  D^\alpha v \right|^2, \\
  |v|_{V^{k,2}_{\beta,\delta}(\Lambda_\ell)}^2 &:= \sum_{|\alpha|= k}
  \int_{\Lambda_\ell} \left|R^{\beta} \theta^{\delta}  D^\alpha v \right|^2
\end{align*}
Here, we have used the standard multi-index notation to describe
partial derivatives, and we have omitted the index $(\ell)$ in $R$ and
$\theta$ for simplicity.

\begin{theorem}[\textbf{Regularity}] \label{thm:regularity}
  \cite[Theorem 2.10]{apel:97c} The weak solution $u$ of the boundary
  value problem \eqref{eq:poissonproblem} admits the decomposition
  \[ u = u_{\mathrm{r}} + u_{\mathrm{s}} \]
  in $\Lambda_\ell$, $\ell=1,\ldots,L$,
  where $u_{\mathrm{r}}\in H^2(\Lambda_\ell)$ and
  \[
    \frac{\partial u_{\mathrm{s}}}{\partial x_i^{(\ell)}} 
    \in V^{1,2}_{\beta,\delta}(\Lambda_\ell), \quad i=1,2,\qquad 
    \frac{\partial u_{\mathrm{s}}}{\partial x_3^{(\ell)}} 
    \in V^{1,2}_{\beta,0}(\Lambda_\ell),
  \]
  for any $\beta,\delta\ge0$ satisfying
  $\beta>\frac12-\lambda_{\mathrm{v}}^{(\ell)}$ and
  $\delta>1-\lambda_{\mathrm{e}}^{(\ell)}$.
\end{theorem}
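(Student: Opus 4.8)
The plan is to exploit that the asserted decomposition is local in each macro-element and to reduce, inside a fixed $\Lambda_\ell$, to three model situations by a smooth partition of unity $\chi_{\mathrm{int}}+\chi_{\mathrm e}+\chi_{\mathrm v}\equiv 1$, where $\chi_{\mathrm{int}}$ is supported away from the singular edge and vertex, $\chi_{\mathrm e}$ in a neighbourhood of the singular edge away from the vertex, and $\chi_{\mathrm v}$ in a neighbourhood of the vertex. On the support of $\chi_{\mathrm{int}}$ the boundary is smooth, so classical interior and boundary $H^2$-regularity apply and this contribution is absorbed into $u_{\mathrm r}$; the cut-off commutators $[\Delta,\chi_\bullet]u$ are of lower order and cause no trouble. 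It thus remains to analyse $\chi_{\mathrm e}u$ and $\chi_{\mathrm v}u$.

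For $\chi_{\mathrm e}u$ I would flatten the two faces meeting at the edge and pass to the model Dirichlet problem for $-\Delta$ in an infinite dihedral wedge of opening $\omega_e$, with the edge variable $x_3$ entering as a parameter. The Kondrat'ev/Mellin theory for the wedge (see \cite[Sections 2.5--2.6]{grisvard:92b}) yields $u=u_{\mathrm r}+u_{\mathrm s}$ with $u_{\mathrm r}\in H^2$ and $u_{\mathrm s}$ of the form $r^{\lambda_e}\Phi(\varphi)$ times a factor smooth in $x_3$, the first singular exponent being $\lambda_e=\pi/\omega_e$. Differentiating $u_{\mathrm s}$ in $x_1$ or $x_2$ lowers the $r$-power by one, whereas differentiation in $x_3$ does not; writing $r=R\theta$ (with $R\sim1$ on the support of $\chi_{\mathrm e}$) and evaluating the weighted integrals shows $\partial_i u_{\mathrm s}\in V^{1,2}_{\beta,\delta}$, $i=1,2$, precisely for $\delta>1-\lambda_e$, and $\partial_3 u_{\mathrm s}\in V^{1,2}_{\beta,0}$ because $\lambda_e>0$. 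The hypothesis $\lambda_e>\tfrac12$ guarantees that the weaker range $\delta\in[0,\tfrac12)$ stays admissible when $e$ is not singular.

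For $\chi_{\mathrm v}u$ I would decompose the vertex neighbourhood dyadically in the distance $R$, rescale each annulus $\{2^{-j-1}<R<2^{-j}\}$ to unit size, apply the already-established interior and edge regularity on the fixed rescaled truncated cone (whose apex behaviour is governed by the spectrum of the Laplace--Beltrami operator on $\Omega\cap S^2$, i.e. by $\lambda_v$), and sum the rescaled estimates over $j$. Tuning the weight exponent to $\beta>\tfrac12-\lambda_v$ makes the resulting series converge and produces the $R$-weighted bounds, while the edge analysis within each annulus supplies the $\theta$-behaviour, so one lands exactly in the two-parameter spaces of the statement. Finally I would check compatibility of the three local splittings on the overlaps (the $H^2$-parts always going into $u_{\mathrm r}$) and transport everything back to $\Lambda_\ell$.

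The main obstacle is the edge--vertex interaction near the point where the singular edge meets the singular vertex, where $R\to0$ and $\theta\to0$ simultaneously: one cannot simply superpose an edge expansion and a vertex expansion, because the coefficient of the edge singularity is a function of the edge variable which, seen near the apex, carries its own vertex-type singularity, so the two mechanisms must be interlaced. The two-weight norm $V^{k,2}_{\beta,\delta}$ is tailored to exactly this, and the delicate anisotropic point -- that the $x_3$-derivative needs no angular weight at all ($\delta=0$) -- has to be tracked carefully through the rescaling; this bookkeeping is the technical core of \cite[Theorem 2.10]{apel:97c}.
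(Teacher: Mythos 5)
The paper offers no proof of this statement: Theorem \ref{thm:regularity} is imported verbatim from \cite[Theorem 2.10]{apel:97c}, so there is no in-paper argument to compare yours against. Judged on its own terms, your sketch is a faithful outline of the standard strategy behind such results (localization by a partition of unity, Mellin/Kondrat'ev analysis in the wedge for the edge part, dyadic rescaling in $R$ for the vertex part, and an interlacing argument where the two meet), and your exponent bookkeeping is consistent: with $r=R\theta$ and $R\sim1$ near the edge, $\partial_{ij}u_{\mathrm s}\sim r^{\lambda_e-2}$ lies in the $\theta^\delta$-weighted $L^2$ exactly for $\delta>1-\lambda_e$, and near the vertex $R^{\lambda_v-2}\in R^{-\beta}L^2$ exactly for $\beta>\frac12-\lambda_v$, matching the hypotheses.

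Two caveats are worth flagging. First, your ansatz that the edge-singular part has the separated form $r^{\lambda_e}\Phi(\varphi)$ times a factor \emph{smooth} in $x_3$ overstates what is available when $f$ is merely in $L^2$: the edge coefficient is only a Sobolev function of $x_3$ (and near the apex carries its own vertex singularity, as you note), so the splitting $u=u_{\mathrm r}+u_{\mathrm s}$ in these spaces is in practice obtained by direct a priori estimates in the weighted norms rather than by an explicit singular-function expansion. Second, the genuinely anisotropic assertion of the theorem --- that $\partial_3 u_{\mathrm s}$ needs \emph{no} angular weight, i.e.\ lands in $V^{1,2}_{\beta,0}$ rather than $V^{1,2}_{\beta,\delta}$ --- does not follow from the isotropic corner/edge asymptotics you invoke; it requires a separate tangential-regularity argument along the edge (difference quotients or partial Fourier analysis in $x_3$), and this, together with the edge--vertex interaction you correctly identify, is the actual technical content of the cited theorem. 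Your sketch names these difficulties but does not resolve them; as a reconstruction of a result the paper itself only cites, that is a reasonable level of detail, but it is a roadmap rather than a proof.
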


Following \cite{apel:97c} we consider a triangulation $\Th$ of $\Omega$,
\[
  \overline\Omega=\bigcup_{T\in\Th} \overline T,
\]
made up of tetrahedra which match the initial partition: if
$T\cap\Lambda_\ell\not=\emptyset$ then $T\subset\Lambda_\ell$.  Four
cases are considered:

\begin{enumerate}
\item If $\Lambda_\ell$ does neither contain a singular edge nor a
  singular vertex then $\Th|_{\Lambda_\ell}$ is assumed to be
  isotropic and quasi-uniform with element size $h$, see Figure
  \ref{fig:macros}, top left.

\item If $\Lambda_\ell$ contains a singular vertex but no singular
  edges then $\Th|_{\Lambda_\ell}$ is isotropic and has a singular
  vertex refinement, i.e., the mesh is graded towards the singular
  vertex with a grading parameter~$\nu_\ell\in(0,1]$. This can be achieved by
  using a coordinate transformation of the vertices from Case 1, see
  Figure \ref{fig:macros}, top right.

\item If $\Lambda_\ell$ contains a singular edge but no singular
  vertices then $\Th|_{\Lambda_\ell}$ is anisotropically graded
  towards the singular edge. The grading parameter is $\mu_\ell\in(0,1]$.  To
  this end, we introduce a family $\mathcal P_\ell$ of planes
  transversal to the singular edge and containing the opposite one.
  These planes split the macro element into strips and contain all
  nodes. In the planes the position of the nodes is achieved by
  applying a coordinate transformation to a uniform triangulation, see
  Figure \ref{fig:macros}, bottom left.

\item If $\Lambda_\ell$ contains both a singular vertex and a singular
  edge then $\Th|_{\Lambda_\ell}$ is graded towards the singular edge
  with grading parameter $\mu_\ell\in(0,1]$ and towards the singular vertex
  with grading parameter~$\nu_\ell\in(0,1]$. The mesh is topologically
  equivalent to the mesh of Case 3 but the planes of $\mathcal P_\ell$ do
  not divide the singular edge equidistantly but with a grading towards
  the singular vertex.
\end{enumerate}

\begin{figure}
  \begin{center}
    \includegraphics[width=40mm]{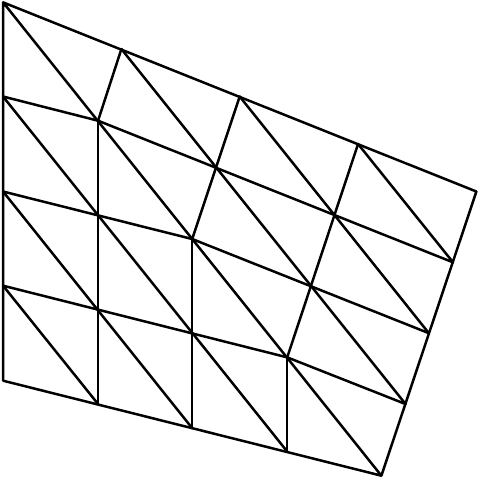} \qquad 
    \includegraphics[width=40mm]{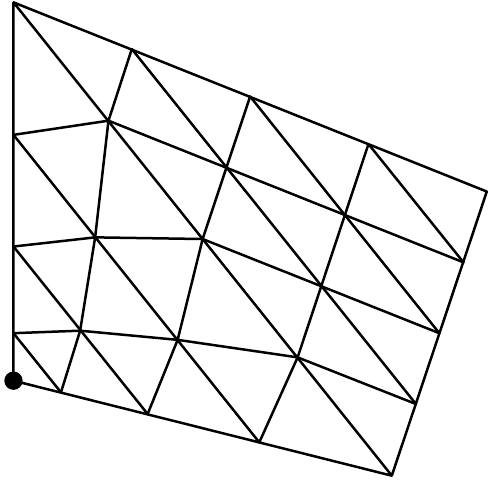} \\
    \includegraphics[width=40mm]{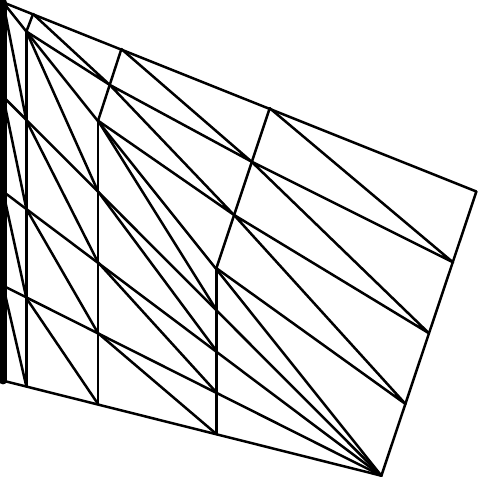} \qquad
    \includegraphics[width=40mm]{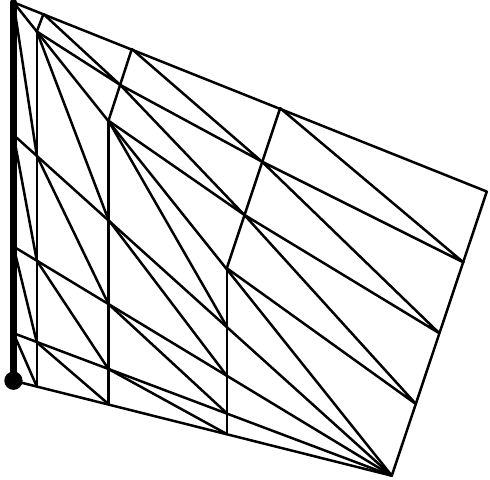}
    \caption{\label{fig:macros} Macroelements of types 1, 2, 3 and 4}
  \end{center}
\end{figure}

We point out that anisotropic elements can appear only in Cases 3 and
4, for which $\Th$ contains needle elements near the singular edge and
flat elements near the opposite one, see Figure \ref{fig:macros}.  We
further observe that if $\Lambda_\ell$ is of type 3 or 4, the elements
in $\Th|_{\Lambda_\ell}$ do not intersect any plane of $\mathcal
P_\ell$.

For each element $T$ we introduce its lengths
$h_{1,T},h_{2,T},h_{3,T}$ and $h_T$ as follows. Let $h_T$ be the
diameter of $T$. If $T\subset \Lambda_\ell$ with $\Lambda_\ell$ of
type 1 or 2, then $h_{1,T}=h_{2,T}=h_{3,T}=h_T$. If
$T\subset\Lambda_\ell$ with $\Lambda_\ell$ of type 3 or 4 then
$h_{3,T}$ is the length of the edge $e_{3,T}$ of $T$ parallel to the
singular edge, and $h_{1,T}=h_{2,T}=\frac12(|e_{1,T}|+|e_{2,T}|)$
where $e_{1,T}$ and $e_{2,T}$ are the edges of $T$ intersecting
$e_{3,T}$ and each one of them is contained in some plane of $\mathcal
P_\ell$.

By classical regularity theory, the solution $u$ of the boundary value
problem \eqref{eq:poissonproblem} is continuous, see e.g. \cite[page
page 79]{grisvard:92b}, such that the Lagrange interpolant $u_I$ with
respect to the subdivision $\{\Lambda_\ell\}$ is well defined. We
consider the decomposition
\begin{equation}\label{eq:splitting}
  u=u_I + u_R.
\end{equation}
It follows that the restriction $u_R|_{\Lambda_\ell}$ has the same
smoothness properties as~$u$, see Theorem
\ref{thm:regularity}. Furthermore, $u_R$ vanishes in coupling nodes
and on singular edges.  We construct now an interpolant $D_hu_R\in
V_h$ which also vanishes on these nodes such that $u_I+D_hu_R \in V_h$
can be used to estimate the discretization error via \eqref{eq:cea}.

To this end, let $\NN$, $\Nin$, $\Nc$ and $\Ns$ be the set of all
nodes of $\Th$, the set of all the interior nodes, the set of coupling
nodes, and the set of nodes which belong to some singular edge,
respectively. The terminal points of the singular edges are included
in $\Ns$. The piecewise linear nodal basis on $\Th$ is denoted by
$\{\phi_\node\}_{\node\in\mathcal{N}}$. We associate (as specified
below) with each $\node\in\mathcal N\setminus(\Nc\cup\Ns)$ an edge
$\sigma_\node$ with $\node$ as an endpoint. Note that
$u|_{\sigma_\node}\in L^2(\sigma_\node)$ since $u\in H^s(\Omega)$ with
$s>1$.  Hence the operator $D_h$ with
\begin{equation}\label{eq:defDh}
  D_hu=\sum_{\node\in\NN\setminus(\Nc\cup\Ns)}
  (\Pi_{\sigma_\node}u)(\node) \cdot \phi_\node(x),
\end{equation}
is well defined when $\Pi_{\sigma}: L^2(\sigma)\to
\mathcal{P}_1(\sigma)$ is the $L^2(\sigma)$-projection operator onto
the space of polynomials of degree less than or equal to one. Note
that $D_hu$ vanishes on coupling nodes and on singular edges by
construction. In order to impose the boundary conditions and to be
able to prove interpolation error estimates we need to select the
edges $\sigma_\node$ in an appropriate way, compare the illustration
in Figure \ref{fig:sigmas}.
\begin{figure}
  \begin{center}
    \includegraphics[width=40mm]{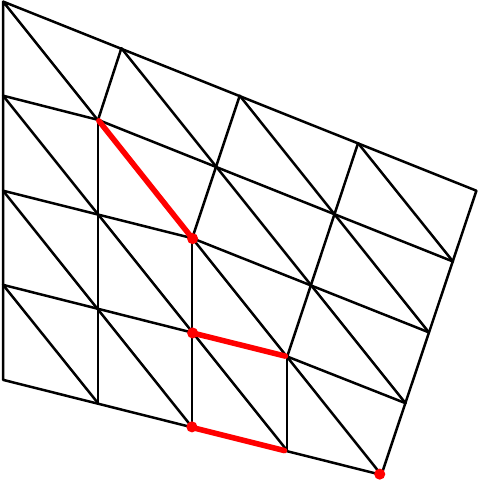} \qquad
    \includegraphics[width=40mm]{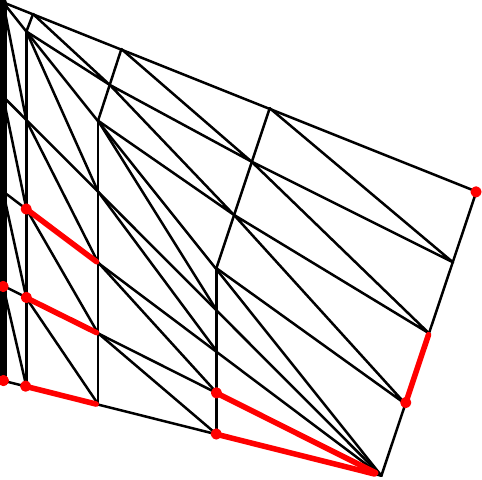}
  \end{center}
  \caption{\label{fig:sigmas}Illustration of the edges $\sigma_\node$}
\end{figure}
First, we demand that
\begin{itemize}
\item for each node $\node\in\NN\setminus(\Nc\cup\Ns)$, $\node$ and
  $\sigma_\node$ belong to the same macroelement. \pagebreak[3]

  This requires in particular the following restrictions.
  \begin{itemize}
  \item If $\node$ lays on a boundary or coupling face, then
    $\sigma_\node$ is contained in that face.
  \item If $\node$ lays on a coupling edge, then $\sigma_\node$ is
    contained in that coupling edge.
  \end{itemize}
\end{itemize}
Note that these requirements made the treatment of the coupling nodes
via the interpolation on the initial $u_I$ necessary. Note further
that this construction leads to a preservation of the homogeneous
Dirichlet boundary condition.

In order to prove the stability of $D_h$ in the anisotropic refinement
regions we also require:
\begin{itemize}\setlength{\itemsep}{0ex}
\item If $\node$ is a vertex of a tetrahedron contained in a
  macroelement $\Lambda_\ell$ of types 3 or 4, then $\sigma_\node$ is
  an edge contained on some plane of $\mathcal P_\ell$.
\item If $\node_1$ and $\node_2$ belong to a macroelement
  $\Lambda_\ell$ of types 3 or 4 and have the same orthogonal
  projection onto the $x_1^{(\ell)}x_2^{(\ell)}$-plane, then the same
  holds for $\sigma_{n_1}$ and ${\sigma_{n_2}}$.
\end{itemize}

In order to estimate the interpolation error we need to define for
each $T\in \Th$ a set $S_T$ which should satisfy the following
assumptions.
\begin{itemize}\setlength{\itemsep}{0ex}
\item The set $S_T$ is a union of elements of $\Th$ (plus some faces)
  and in particular $T\subseteq S_T$.
\item The set $S_T$ is an open connected domain, and as small as possible.
\item We have $\sigma_\node\subset \overline{S_T}$ for all nodes $\node$ of $T$.
\item If $T\subset \Lambda_\ell$, then $S_T\subset \Lambda_\ell$.
\item If $T\subset\Lambda_\ell$ with $\Lambda_\ell$ of type 3 of 4,
  then $S_T$ is a prism where the top and bottom faces are contained
  in two planes of $\mathcal P_\ell$ (and so they are not parallel)
  and the other faces are parallel to the singular edge.
\end{itemize}\pagebreak[3]

The following properties follow from the definitions of the edges
$\sigma_\node$ and the sets $S_T$.
\begin{enumerate}
\item Let $T$ be contained in a macroelement $\Lambda_\ell$ of type
  $3$ or $4$. If $\overline T$ intersects two planes $p_1$ and $p_2$ of
  $\mathcal P_\ell$, then $\overline{S_T}$ intersects exactly the same
  planes $p_1$ and $p_2$.
\item If the node $\node$, $\node\not\in\Nc\cup\Ns$, belongs to a
  coupling face, that means that there exist tetrahedra
  $T_1\subset\Lambda_{\ell_1}$ and $T_2\subset\Lambda_{\ell_2}$ with
  $\ell_1\ne\ell_2$ and $\node\in \overline{T_1}\cap\overline{T_2}$,
  then $S_{T_1}\cap S_{T_2}=\emptyset$ but $\sigma_\node\subset
  \overline{S_{T_1}}\cap \overline{S_{T_2}}$.
\item If $T$ is an isotropic element then all the elements in $S_T$
  are also isotropic and of size of the same order.
\end{enumerate}
The second point is essential for our proof of the approximation
properties. It was the target for which we made the construction as it is.


\setcounter{equation}{0}
\section{\label{sec:mainresults}Error estimates}

The aim of this section is to derive error estimates for our
discretization. They are based on local interpolation error estimates
for our interpolant~$D_h$. For proving these estimates we have to
distinguish several cases, see also Figure \ref{fig:cases} for an
illustration:
\begin{enumerate}
\setlength{\itemsep}{0ex}
\item $T$ is an isotropic element without coupling node, $u$ has full
  regularity,
\item $T$ is an isotropic element with coupling node, $u$ has full
  regularity,
\item $T$ is an isotropic element with coupling node, $u$ has reduced
  regularity,
\item $T$ is an anisotropic flat element without coupling node, $u$
  has full regularity,
\item $T$ is an anisotropic flat element with coupling node, $u$ has
  full regularity,
\item $T$ is an anisotropic needle element without node on the
  singular edge, $u$ has full regularity,
\item $T$ is an anisotropic needle element with node on the singular
  edge, $u$ has reduced regularity.
\end{enumerate}
\begin{figure}
  \begin{center}
    \includegraphics[width=40mm]{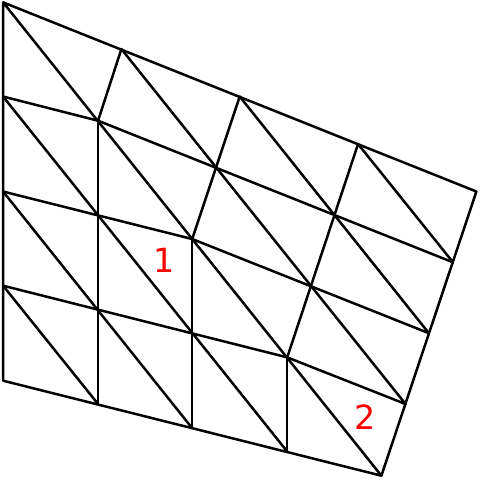} \qquad 
    \includegraphics[width=40mm]{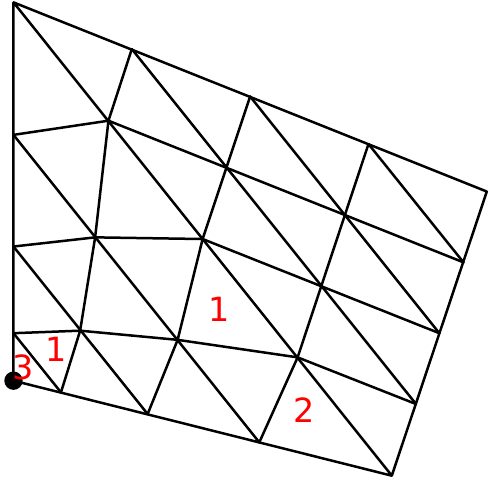} \\
    \includegraphics[width=40mm]{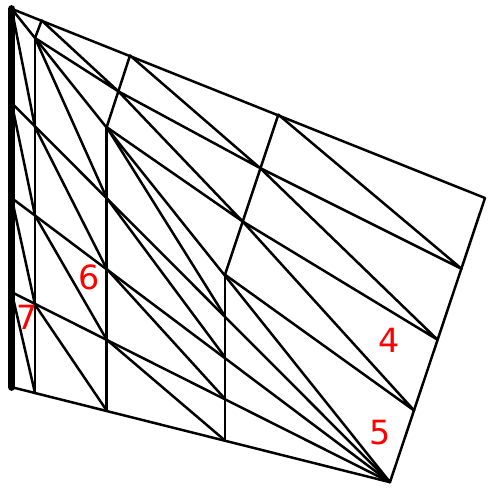} \qquad
    \includegraphics[width=40mm]{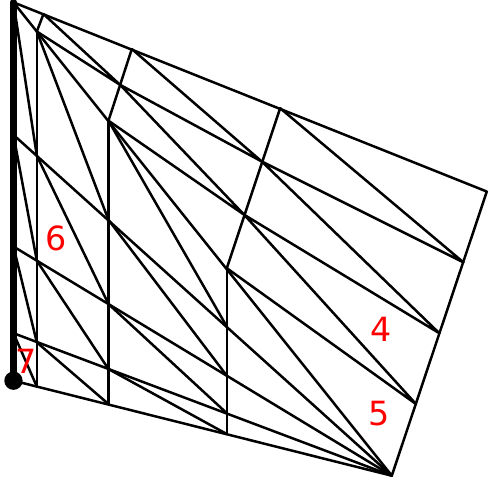}
  \end{center}
  \caption{\label{fig:cases} Illustration of the cases that have to
    considered for the interpolation error estimates}
\end{figure}

In Lemma~\ref{lem:iso_regular} we present the general approach for the
proof of the local interpolation error estimate by considering isotropic
elements with and without coupling nodes (cases 1 and 2). We proceed
with Lemmas~\ref{lem:iso_weighted} where we introduce for isotropic
elements how to cope with the weighted norms in the case of reduced
regularity (case 3). The interpolated function is only from a weighted
Sobolev space but we will see that this even simplifies some parts of
the proof.

For anisotropic elements the use of an inverse inequality (as was done
in the previous lemmas) has to be avoided; instead we use the
structure of the meshes in the macroelements of types 3 and 4. We
start with a stability estimate of $\partial_3 D_hu$ which allows
immediately the treatment of anisotropic flat elements (cases 4 and 5)
in Lemma \ref{lem:aniso_flat_regular}. Then we prove stability
estimates for the remaining derivatives and continue with the
interpolation error estimates for needle elements.
Lemma~\ref{lem:aniso_needle_regular} is devoted to case 6, and Lemma
\ref{lem:aniso_weighted} to case~7.

All these local estimates can then be combined to prove the global
interpolation error estimate, see Theorem~\ref{lem:globinterrest}, and
the finite element error estimates, see Corollary~\ref{cor:fe_errest}.

\begin{lemma}[isotropic element, full regularity]
  \label{lem:iso_regular}
  If $T$ is an isotropic element then the local interpolation error
  estimate
  \begin{align}
    \label{eq:interrest_isoH2} |u-D_hu|_{H^1(T)} &\le
    Ch_T|u|_{H^2(S_T)}
  \end{align}
  holds provided that $u\in H^2(S_T)$ and $u(\node)=0$ for all $n\in\Nc$.
\end{lemma}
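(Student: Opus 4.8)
The plan is to proceed by the standard Bramble--Hilbert / scaling argument adapted to the quasi-interpolant $D_h$, exploiting that on isotropic patches all elements of $S_T$ have diameters of the same order $h_T$ (property 3 of the sets $S_T$). First I would reduce to a reference configuration: since $S_T$ is a connected union of $O(1)$ isotropic elements whose sizes are comparable to $h_T$, there is an affine map $F_T$ from a fixed reference patch $\hat S$ (one of finitely many combinatorial types) onto $S_T$ with $\|DF_T\|\sim h_T$ and $\|DF_T^{-1}\|\sim h_T^{-1}$. Under this map the seminorms transform by the usual powers of $h_T$, so it suffices to prove $|\hat u - \widehat{D_h u}|_{H^1(\hat T)} \le C|\hat u|_{H^2(\hat S)}$ on the reference patch, where $\widehat{D_h u}$ is the pulled-back interpolant. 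The key point making this transformation legitimate is that $D_h$ commutes with affine maps in the right sense: because $\Pi_\sigma$ is the $L^2(\sigma)$-projection onto $\mathcal P_1(\sigma)$ and affine maps send segments to segments and preserve the degree-$1$ polynomial space, $(\Pi_{F_T(\hat\sigma)} u)\circ F_T = \Pi_{\hat\sigma}(u\circ F_T)$, hence $\widehat{D_h u} = D_h \hat u$ on the reference patch with the correspondingly mapped edges $\hat\sigma_{\hat\node}$.

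On the reference patch the argument is then a Bramble--Hilbert step. I would show $D_h$ reproduces linear polynomials locally: if $p\in\mathcal P_1$ then $\Pi_{\sigma_\node} p = p$ on each edge, so $(D_h p)(\node) = p(\node)$ for every node $\node$ of $T$ that is not a coupling or singular-edge node; under the hypothesis $u(\node)=0$ for $\node\in\Nc$, the only nodes of an isotropic $T$ that are excluded from the sum are coupling nodes (an isotropic element cannot touch a singular edge in the relevant configuration, or if it does the values there vanish for $u\in H^2$ too — this needs a brief check), and at those the correct value is $0$, which is exactly what the missing terms contribute. Hence $D_h$ restricted to $T$ agrees with the nodal interpolant on $\mathcal P_1$, so $D_h p|_T = p|_T$ for $p\in\mathcal P_1$. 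Combined with the local stability bound
\[
  \|D_h \hat u\|_{H^1(\hat T)} \le C\,\|\hat u\|_{H^1(\hat S)},
\]
which follows from $\|\Pi_{\hat\sigma}\hat u\|_{L^\infty(\hat\sigma)} \le C\|\hat u\|_{L^2(\hat\sigma)} \le C\|\hat u\|_{H^1(\hat S)}$ (trace theorem on the fixed patch, the edges $\hat\sigma_{\hat\node}$ being fixed segments) together with $\|\phi_{\hat\node}\|_{H^1(\hat T)} \le C$, the Bramble--Hilbert lemma applied to the operator $\hat u \mapsto \hat u - D_h\hat u$, which annihilates $\mathcal P_1$ and is bounded from $H^2(\hat S)$ to $H^1(\hat T)$, gives $|\hat u - D_h\hat u|_{H^1(\hat T)} \le C|\hat u|_{H^2(\hat S)}$. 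Scaling back with $F_T$ and using $|S_T|^{1/2}\sim h_T^{3/2}$, $\|DF_T\|\sim h_T$ then yields \eqref{eq:interrest_isoH2}.

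The main obstacle is the bookkeeping around the excluded nodes and the reference-patch setup, not any hard analysis: one must verify that omitting the terms for $\node\in\Nc\cup\Ns$ does not destroy the reproduction of linear polynomials on $T$ — this is why the hypothesis $u(\node)=0$ on $\Nc$ is imposed, and why (via the decomposition $u = u_I + u_R$ in the paper) the construction routes coupling nodes through $u_I$ — and that the finitely many combinatorial types of isotropic patches $\hat S$, each with its finitely many admissible choices of edges $\hat\sigma_{\hat\node}$, all yield uniform constants. A secondary technical point is the trace inequality $\|\hat u\|_{L^2(\hat\sigma)}\le C\|\hat u\|_{H^1(\hat S)}$: since $\hat\sigma$ is a one-dimensional segment inside the three-dimensional patch $\hat S$, one needs $H^1\hookrightarrow L^2$ on segments in $3$D, which holds (indeed $H^s\hookrightarrow C^0$ on segments for $s>1/2$, and the embedding $H^1(\hat S)\to H^{s}(\hat\sigma)$ with $s=1/2$ is continuous — or one simply uses that $u\in H^{1+\epsilon}$ here), so this causes no trouble on the fixed reference configuration.
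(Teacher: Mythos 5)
Your skeleton — scale to a reference patch, show $D_h$ is bounded and reproduces $\mathcal{P}_1$, apply Bramble--Hilbert/Deny--Lions — is essentially the paper's argument for elements \emph{without} coupling nodes. The genuine gap is your claim that $D_hp|_T=p|_T$ for all $p\in\mathcal{P}_1$ even when $T$ contains a coupling node $\node\in\Nc$. The sum \eqref{eq:defDh} simply omits the term at $\node$, so $(D_hp)(\node)=0$ while $p(\node)$ is arbitrary; the hypothesis $u(\node)=0$ constrains $u$, not the polynomial $p$ you subtract in the Bramble--Hilbert step. Hence on such elements $u\mapsto u-D_hu$ does \emph{not} annihilate $\mathcal{P}_1$, and your conclusion does not follow as written — and this is precisely the nontrivial case of the lemma, the reason the hypothesis $u(\node)=0$ appears at all. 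The paper's fix is to augment the interpolant by the missing term $(\Pi_{\sigma_\node}u)(\node)\phi_\node$ (with an ad hoc edge $\sigma_\node\subset\overline T$), so the augmented operator reproduces $\mathcal{P}_1$ and the first argument applies, and then to bound the added term separately via $(\Pi_{\sigma_\node}u)(\node)=(\Pi_{\sigma_\node}(u-I_Tu))(\node)$ — which is where $u(\node)=0$ enters, through $(\Pi_{\sigma_\node}I_Tu)(\node)=I_Tu(\node)=0$ — together with Lagrange interpolation estimates. (A repair inside your framework is possible: restrict the competitors to $\{p\in\mathcal{P}_1:\ p(\node)=0\}$ and use $u(\node)=0$ plus $H^2(\hat S)\hookrightarrow C^0(\hat S)$ to recover the full Deny--Lions bound; but some such extra step is unavoidable.)

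A secondary error: the displayed stability bound $\|D_h\hat u\|_{H^1(\hat T)}\le C\|\hat u\|_{H^1(\hat S)}$ is false, because the trace of an $H^1$ function of three variables on a one-dimensional segment is not controlled in $L^2$ — the segment has codimension $2$, so more than one derivative is needed, and there is no continuous map $H^1(\hat S)\to H^{1/2}(\hat\sigma)$ as you assert. Your argument survives only because the Bramble--Hilbert step really needs boundedness of $u\mapsto u-D_hu$ from $H^2(\hat S)$ to $H^1(\hat T)$, which does hold (via $H^2\hookrightarrow C^0$ in three dimensions); this is why the paper's trace inequality \eqref{eq:tracetheorem} carries the full second-order information on the right-hand side. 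State and use the $H^2\to H^1$ stability, not the $H^1\to H^1$ one.
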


\begin{proof} 
  Following the explanations in \cite[page 486]{scott:90} and
  \cite[page 1156]{apel:97b}, an explicit representation of $D_hu$
  from \eqref{eq:defDh} can be given by introducing the unique
  function $\psi_\node\in V_h|_{\sigma_\node}$ with
  $\int_{\sigma_\node}\psi_\node\phi_j=\delta_{\node j}$ for all
  $j\in\mathcal{N}$ such that
  \begin{equation}\label{def:PiSigmaUnode}
    (\Pi_{\sigma_\node}u)(\node)=\int_{\sigma_\node}u\psi_\node
  \end{equation}
  and
  \begin{equation}\label{eq:defDhnew}
    D_hu|_T=\sum_{\node\in \NT}
    \left(\int_{\sigma_{\node}}u\psi_\node\right)\cdot\phi_{\node}
  \end{equation}
  where we denote by $\NT$ the set of nodes of $T$ without the
  coupling nodes. Note that
  \begin{equation}\label{eq:L2sigma}
    \|\psi_\node\|_{L^\infty(\sigma_{\node})}=C\,|\sigma_{\node}|^{-1},
  \end{equation}
  compare \cite[page 1157]{apel:97b}. (By some calculation one can
  even specify that $C=4$.) With \eqref{eq:defDhnew}, the direct
  computation
  \begin{equation}\label{eq:normPhi}
    |\phi_{\node}|_{H^1(T)} \le C h_T^{-1} |T|^{1/2},
  \end{equation}
  the trace theorem
  \begin{equation}\label{eq:tracetheorem}
    \|u\|_{L^1(\sigma_{\node})} \le C |\sigma_{\node}| |S_T|^{-1/2}
    (\|u\|_{L^2(S_T)} + h_T |u|_{H^1(S_T)} + h_T^2 |u|_{H^2(S_T)}),
  \end{equation}
  and $|S_T|\le C|T|$   we obtain
  \begin{align}\nonumber
    |D_hu|_{H^1(T)} &\le C\sum_{\node\in \NT} \|u\|_{L^1(\sigma_{\node})}
    \|\psi_\node\|_{L^\infty(\sigma_{\node})} |\phi_{\node}|_{H^1(T)} \\
    &\le \label{eq:stab_iso}
    Ch_T^{-1}(\|u\|_{L^2(S_T)} + h_T |u|_{H^1(S_T)} + h_T^2 |u|_{H^2(S_T)}).
  \end{align}

  If $\NT$ does not contain a node $\node\in\Nc$ we find that $D_hw=w$
  for all $w\in\mathcal{P}_1$ such that we get by using the triangle
  inequality and the stability estimate \eqref{eq:stab_iso}
  \begin{align*}
    |u-D_hu|_{H^1(T)} &= |(u-w)-D_h(u-w)|_{H^1(T)}
    \quad\forall w\in\mathcal{P}_1 \\
    &\le |u-w|_{H^1(T)} + |D_h(u-w)|_{H^1(T)} \\
    &\le C \left(h_T^{-1}\|u-w\|_{L^2(S_T)} + |u-w|_{H^1(S_T)} + h_T |u|_{H^2(S_T)}\right).
  \end{align*}
  We use now a Deny--Lions type argument (see e.g. \cite{dupont:80})
  and conclude estimate~\eqref{eq:interrest_isoH2}.

  In the case when $\NT$ contains a node $\node\in\Nc$ we do not have
  the property that $D_hw=w$ for all $w\in\mathcal{P}_1$ but we can
  use that $u(\node)=0$.  Let $\sigma_n$ be an edge contained in $T$ having
  $\node$ as an endpoint, and let $\phi_\node$ be the Lagrange basis
  function associated with $\node$.
  (Note that we deal here with nodes $\node$ which are not used in the
  definition of $D_h$. Therefore we can assume that $\sigma_{\node}$
  is local in $\Lambda_\ell$.)
  Consequently, we have with the previous argument that
  \begin{equation}\label{eq:interp_coupling_node}
    \left|u-(D_hu+(\Pi_{\sigma_\node}u)(\node)\phi_\node)\right|_{H^1(T)} \le
    C h_T|u|_{H^2(S_T)}.
  \end{equation}
  Let $I_{T}u$ be the linear Lagrange interpolation of $u$ on $T$.
  Since $I_{T}u|_{\sigma_\node}$ is linear, we have have
  $(\Pi_{\sigma_\node}I_{T}u)(n)=0$. From this fact and using
  \eqref{def:PiSigmaUnode}--\eqref{eq:tracetheorem} as in the derivation of
  \eqref{eq:stab_iso} (here with the specific $T$ instead of $S_T$
  since $\sigma_\node\subset\overline T$), we have
  \begin{eqnarray*}
    \lefteqn{|(\Pi_{\sigma_\node}u)(\node)\phi_\node|_{H^1(T)}=
    |(\Pi_{\sigma_\node}(u-I_{T}u))(\node)\phi_\node|_{H^1(T)}}
    \\&\le&
    C h_T^{-1}\left( |u-I_{T}u|_{L^2(T)} + h_T|u-I_{T}u|_{H^1(T)} +
      h_T^2|u|_{H^2(T)} \right)\\
    &\le& C h_T|u|_{H^2(T)}
  \end{eqnarray*}
  where we used standard estimates for the Lagrange interpolant in the
  last step. With \eqref{eq:interp_coupling_node} and the triangle
  inequality we conclude estimate \eqref{eq:interrest_isoH2} also in
  this case.
\end{proof}

\begin{lemma}[isotropic element, reduced regularity]
  \label{lem:iso_weighted}
  If $T$ is an isotropic element then the local interpolation error
  estimate
  \begin{align}
    \label{eq:interrest_isoV} |u-D_hu|_{H^1(T)} &\le
    Ch_T^{1-\beta}\|u\|_{V^{2,2}_{\beta,0}(S_T)}
  \end{align}
  holds provided that $u\in V^{2,2}_{\beta,0}(S_T)$, $\beta\in[0,1)$.
\end{lemma}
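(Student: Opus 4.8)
The plan is to mimic the proof of Lemma~\ref{lem:iso_regular} as closely as possible, replacing the unweighted $H^2$-seminorm by the weighted norm $\|\cdot\|_{V^{2,2}_{\beta,0}(S_T)}$ and keeping track of the fact that on an isotropic element the weight $R^{\beta}$ is essentially constant. Concretely: since $T\subset\Lambda_\ell$ is isotropic and contained in a macroelement with a singular vertex refinement, the grading guarantees that $\sup_{S_T} R \le C \inf_{S_T} R =: C R_T$ and hence $R^{\beta}\sim R_T^{\beta}$ on $S_T$ (the weight $\theta^{\delta}$ with $\delta=0$ is trivially $1$, which is why the statement only uses $V^{2,2}_{\beta,0}$). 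The diameter of an isotropic element near the singular vertex scales like $h_T \sim h^{1/\nu_\ell} R_T^{1-\nu_\ell}$, but the only thing we actually need is the comparison $h_T \le C R_T$, valid because the mesh does not refine faster than the distance to the vertex, together with the fact that $S_T$ and $T$ have comparable size and that $R$ is comparable throughout $S_T$.

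With these observations the argument runs as follows. First I would re-derive the stability estimate \eqref{eq:stab_iso} verbatim — it only used the trace theorem \eqref{eq:tracetheorem}, the bounds \eqref{eq:L2sigma}, \eqref{eq:normPhi}, and $|S_T|\le C|T|$, none of which involve the regularity of $u$ — obtaining
\[
  |D_hu|_{H^1(T)} \le C h_T^{-1}\bigl(\|u\|_{L^2(S_T)} + h_T|u|_{H^1(S_T)} + h_T^2|u|_{H^2(S_T)}\bigr).
\]
Next, since $T$ has no coupling node (a node on a singular edge is excluded by hypothesis and coupling nodes are irrelevant in a macroelement of type 1 or 2), $D_h$ reproduces linear polynomials on $T$, so for any $w\in\mathcal{P}_1$ the triangle inequality and the stability estimate give
\[
  |u-D_hu|_{H^1(T)} \le C\bigl(h_T^{-1}\|u-w\|_{L^2(S_T)} + |u-w|_{H^1(S_T)} + h_T|u|_{H^2(S_T)}\bigr).
\]
Now I would invoke a weighted Deny--Lions / Bramble--Hilbert argument on $S_T$: choosing $w$ to be an averaged Taylor polynomial of degree $1$, one has $\|u-w\|_{L^2(S_T)} \le C h_T^2 |u|_{H^2(S_T)}$ and $|u-w|_{H^1(S_T)} \le C h_T |u|_{H^2(S_T)}$, so all three terms collapse to $C h_T |u|_{H^2(S_T)}$. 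Finally, using $R^{\beta}\sim R_T^{\beta}$ on $S_T$ together with $h_T \le C R_T$, i.e. $R_T^{-\beta} \le C h_T^{-\beta}$ (note $\beta\ge 0$), I estimate
\[
  h_T |u|_{H^2(S_T)} = h_T R_T^{-\beta}\, R_T^{\beta}|u|_{H^2(S_T)} \le C h_T^{1-\beta}\, \|R^{\beta} D^2 u\|_{L^2(S_T)} \le C h_T^{1-\beta}\|u\|_{V^{2,2}_{\beta,0}(S_T)},
\]
which is \eqref{eq:interrest_isoV}.

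The main obstacle — the part requiring genuine care rather than routine imitation — is the weighted Bramble--Hilbert estimate, i.e. showing that the averaged Taylor polynomial $w$ satisfies $|u-w|_{H^k(S_T)} \le C h_T^{2-k} |u|_{H^2(S_T)}$ with a constant independent of $T$ (and in particular independent of how close $T$ sits to the singular vertex). This is where one must exploit that $S_T$ is, after the scaling $x\mapsto x/h_T$, a domain of diameter $O(1)$ lying in an annulus of fixed aspect ratio around the vertex, so the shape of the reference configurations is uniformly controlled and the standard Deny--Lions lemma applies with a uniform constant. One subtlety worth flagging: a priori $u\in V^{2,2}_{\beta,0}(S_T)$ only controls a weighted $H^2$-norm, and near the vertex this is weaker than $H^2$; however, on the fixed element $S_T$ (which stays at positive distance $\sim R_T$ from the vertex in relative terms, since $R\sim R_T$ throughout $S_T$) the weight is bounded above and below, so $u\in V^{2,2}_{\beta,0}(S_T)$ does imply $u\in H^2(S_T)$ with $|u|_{H^2(S_T)} \le C R_T^{-\beta}\|u\|_{V^{2,2}_{\beta,0}(S_T)}$, and the preceding argument is fully justified. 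Everything else — the stability estimate, the polynomial reproduction property, the triangle inequalities — transfers without change from Lemma~\ref{lem:iso_regular}.
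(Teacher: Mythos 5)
There is a genuine gap: your argument silently excludes the one element for which this lemma exists, namely the isotropic element whose patch $S_T$ contains the singular vertex. For that element $\inf_{S_T}R=0$, so your comparison $R\sim R_T$ on $S_T$ fails, and the hypothesis $u\in V^{2,2}_{\beta,0}(S_T)$ does \emph{not} imply $u\in H^2(S_T)$ --- the weight $R^{\beta}$ degenerates exactly there, and for the singular part of the solution $|u|_{H^2(S_T)}$ is in general infinite. Your entire chain (Bramble--Hilbert with $|u|_{H^2(S_T)}$, then trading $R_T^{-\beta}$ for $h_T^{-\beta}$) therefore has nothing to act on. Moreover, for that element the singular vertex \emph{is} a coupling node, so $D_h$ does not reproduce $\mathcal{P}_1$ on $T$; your parenthetical claim that coupling nodes are irrelevant in macroelements of type 1 or 2 is wrong --- the singular vertex of a type-2 macroelement is a coupling node, and this situation is precisely case 3 in the paper's enumeration. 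For isotropic elements away from the vertex one simply uses Lemma~\ref{lem:iso_regular}, so the reduced-regularity lemma is only ever invoked where your proof breaks down.

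The paper's proof is structured to avoid both problems and is genuinely different from yours: it makes no use of polynomial reproduction or a Bramble--Hilbert argument at all. It bounds $|u-D_hu|_{H^1(T)}$ by $|u|_{H^1(T)}+|D_hu|_{H^1(T)}$ together with a stability estimate obtained from a sharper $L^1$-based trace inequality, so that the second derivatives enter only through $|u|_{W^{2,1}(S_T)}$; this term is then converted into the weighted seminorm by Cauchy--Schwarz against $\|R^{-\beta}\|_{L^2(S_T)}\le C|S_T|^{1/2}h_T^{-\beta}$, which is finite even when the vertex lies in $\overline{S_T}$ (only $\beta<\tfrac32$ is needed). The apparent loss from not subtracting a polynomial is recovered because the \emph{full} norm $\|u\|_{V^{2,2}_{\beta,0}(S_T)}$ carries the weights $R^{\beta-2}$ and $R^{\beta-1}$ on the lower-order derivatives, so the terms $h_T^{-1}\|u\|_{L^2(S_T)}$ and $|u|_{H^1(S_T)}$ are absorbed directly via $1\le h_TR^{-1}$. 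To salvage your approach you would need a weighted Bramble--Hilbert lemma on a patch containing the vertex, which is considerably more delicate than the paper's short argument.
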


\begin{proof}
  We start as in the proof of Lemma \ref{lem:iso_regular} but use
  the sharper trace theorem
  \[
    \|u\|_{L^1(\sigma_{\node})} \le C |\sigma_{\node}| |S_T|^{-1}
    (\|u\|_{L^1(S_T)} + h_T |u|_{W^{1,1}(S_T)} + h_T^2 |u|_{W^{2,1}(S_T)}).
  \]
  With \eqref{eq:defDhnew}, \eqref{eq:L2sigma}, \eqref{eq:normPhi},
  and $|S_T|\le C|T|$ we obtain
  \begin{align*}\nonumber
    |D_hu|_{H^1(T)} &\le C\sum_{\node\in \NT} \|u\|_{L^1(\sigma_{\node})}
    \|\psi_\node\|_{L^\infty(\sigma_{\node})} |\phi_{\node}|_{H^1(T)} \\
    &\le  C|S_T|^{-1/2}
    (h_T^{-1}\|u\|_{L^1(S_T)} + |u|_{W^{1,1}(S_T)} + h_T |u|_{W^{2,1}(S_T)}) \\
    &\le C (h_T^{-1}\|u\|_{L^2(S_T)} +
    |u|_{H^1(S_T)} + |S_T|^{-1/2} h_T |u|_{W^{2,1}(S_T)})
  \end{align*}
  and hence via the triangle inequality
  \begin{align}\label{eq:est_iso_W11}
    |u-D_hu|_{H^1(T)}
    &\le  C (h_T^{-1}\|u\|_{L^2(S_T)} +
    |u|_{H^1(S_T)} + |S_T|^{-1/2} h_T |u|_{W^{2,1}(S_T)}).
  \end{align}
  For the first two terms we just use that $R\le h_T$, hence $1\le
  h_TR^{-1}$, to get
  \begin{align*}
    \|u\|_{L^2(S_T)} &\le h_T^{2-\beta}\|u\|_{V^{0,2}_{\beta-2,0}(S_T)}, \\
    |u|_{H^1(S_T)} &\le h_T^{1-\beta}|u|_{V^{1,2}_{\beta-1,0}(S_T)}.
  \end{align*}
  To estimate the third term we use the Cauchy--Schwarz inequality and
  again $R\le h_T$, to obtain for $|\alpha|=2$
  \begin{align*}
    |D^\alpha u|_{L^1(S_T)} &\le
    \|R^{-\beta}\|_{L^2(S_T)} \|R^{\beta}D^\alpha u\|_{L^2(S_T)} \\ &\le
    C|S_T|^{1/2}h_T^{-\beta}|u|_{V^{2,2}_{\beta,0}(S_T)}
  \end{align*}
  where $\|R^{-\beta}\|_{L^2(S_T)}\le C|S_T|^{1/2}h_T^{-\beta}$ is
  obtained by executing the integration and using that
  $\beta<\frac32$.  All these estimates imply estimate
  \eqref{eq:interrest_isoV}.
\end{proof}

In order to prove interpolation error estimates for the anisotropic
elements we derive stability estimates for $D_h$ where we avoid
the use of the inverse inequality.  
Let $x_1,x_2$ and $x_3$ be a Cartesian coordinate system with the
$x_3$-direction parallel to the singular edge of $\Lambda$.  We will
estimate separately the $L^2$-norm of the derivatives of $D_hu$.

Let $T$ be an anisotropic element
with the characteristic lengths $h_{1,T}=h_{2,T}$ and $h_{3,T}$.
We will not use that $h_{3,T}\ge h_{j,T}$, $j=1,2$, in the next lemma
in order to use this estimate both for the needle and the flat
elements. 

\begin{lemma}[Stability in direction of the singular edge]
  \label{lem:stability_x3}
  For any anisotropic element $T$ the estimate
  \[
    \|\partial_3D_hu\|_{L^2(T)} \le C|S_T|^{-1/2} \sum_{|\alpha|\le1}
    h_T^\alpha \|D^\alpha \partial_3u\|_{L^1(S_T)}
  \]
  holds provided that $\partial_3u\in W^{1,1}(S_T)$.
\end{lemma}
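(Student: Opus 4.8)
The key point is that $\partial_3 D_h u$ is a piecewise linear function whose nodal values are differences of the averaged quantities $(\Pi_{\sigma_\node}u)(\node)$, and the structure of the mesh in a type-3 or type-4 macroelement guarantees that the edges $\sigma_\node$ used at the two endpoints of any edge of $T$ parallel to the singular edge are ``translates'' of each other in the $x_3$-direction (this is precisely the second bullet in the requirements on the $\sigma_\node$ and the prism structure of $S_T$). Concretely, I would start from the representation \eqref{eq:defDhnew}, write $\partial_3 D_h u|_T = \sum_{\node\in\NT}(\int_{\sigma_\node}u\psi_\node)\,\partial_3\phi_\node$, and use that $\sum_\node \partial_3\phi_\node = 0$ on $T$ to rewrite this as a sum over edges $e = [\node_1,\node_2]$ of $T$ parallel to the $x_3$-axis, with coefficient $(\Pi_{\sigma_{\node_2}}u)(\node_2) - (\Pi_{\sigma_{\node_1}}u)(\node_1)$ and factor $\partial_3\phi$ of size $h_{3,T}^{-1}$. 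The fact that $D_h w = w$ for linear $w$ (on the relevant non-coupling nodes) means these differences annihilate constants, but more importantly I want to extract the $\partial_3$-derivative.

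The heart of the argument is to show that, since $\sigma_{\node_1}$ and $\sigma_{\node_2}$ have the same orthogonal projection onto the $x_1x_2$-plane and differ only by a shift of length $\le h_{3,T}$ in the $x_3$-direction, one has
\[
  (\Pi_{\sigma_{\node_2}}u)(\node_2) - (\Pi_{\sigma_{\node_1}}u)(\node_1)
  = \int_{\widehat\sigma} \big(\partial_3 u\big)(\cdot)\,\widehat\psi\,,
\]
i.e. this difference can be expressed as a one-dimensional integral of $\partial_3 u$ over a segment in the $x_3$-direction, against a bounded weight of the right order. This is done by parametrizing both edges over the same segment in the $x_1x_2$-plane, writing the difference of the two $L^2(\sigma)$-projected nodal values as an integral in $x_3$ of the derivative, and noting that $|\widehat\psi|\le C|\sigma_\node|^{-1}$ by \eqref{eq:L2sigma}. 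Combining $h_{3,T}^{-1}$ from $\partial_3\phi$, $|\sigma_\node|^{-1}\sim h_{3,T}^{-1}\cdot$(something) — more carefully, one keeps track that the $x_3$-integration has length $\le h_{3,T}$, so $h_{3,T}^{-1}$ cancels — and then applying a trace theorem in the transversal variables to pass from the edge integral of $\partial_3 u$ to the $W^{1,1}(S_T)$-norm of $\partial_3 u$, exactly as in \eqref{eq:tracetheorem} but applied to the function $\partial_3 u$ on the prism $S_T$, yields the claimed bound with the factor $|S_T|^{-1/2}$ coming from the trace inequality on the prism.

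I would organize the estimate as: (i) represent $\partial_3 D_h u$ via edge differences; (ii) express each edge difference as a $\partial_3$-line-integral against a weight bounded by $C|\sigma|^{-1}$; (iii) square, sum over the $O(1)$ edges of $T$, use $\|\partial_3\phi\|_{L^2(T)}\le C h_{3,T}^{-1}|T|^{1/2}$ and the length $\le h_{3,T}$ of the line integral to kill the negative powers of $h_{3,T}$; (iv) apply the (anisotropic) trace theorem on the prism $S_T$ to the function $\partial_3 u$, picking up $|S_T|^{-1/2}(\|\partial_3 u\|_{L^1(S_T)} + h_T\,|\partial_3 u|_{W^{1,1}(S_T)})$ — which is exactly $|S_T|^{-1/2}\sum_{|\alpha|\le1}h_T^{|\alpha|}\|D^\alpha\partial_3 u\|_{L^1(S_T)}$. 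The main obstacle is step (ii): one has to verify carefully that the construction of the $\sigma_\node$ (same $x_1x_2$-projection for nodes with the same projection, and $\sigma_\node$ lying in a plane of $\mathcal P_\ell$) really does make the two edges $\sigma_{\node_1},\sigma_{\node_2}$ graphs over a common transversal segment with a genuine $x_3$-shift, so that differencing produces $\partial_3 u$ and nothing else; this is where the geometry of the type-3/4 meshes enters essentially, and it is the reason the requirements on $\sigma_\node$ were imposed. Everything else is routine once that reduction is in place.
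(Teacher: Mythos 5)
Your plan is correct and follows essentially the same route as the paper: reduce $\partial_3 D_hu|_T$ to the difference of the two averaged values at the endpoints $\node_1,\node_2$ of the edge of $T$ parallel to the singular edge, rewrite that difference as $h_{3,T}^{-1}\int_{F_T}\partial_3 u\,\psi_T$ over the quadrilateral $F_T$ spanned by $\sigma_{\node_1}$ and $\sigma_{\node_2}$ (this is exactly where the same-projection requirement on the $\sigma_\node$ enters), and finish with an anisotropic trace inequality from $F_T$ into the prism $S_T$ applied to $\partial_3u$. The one point to tighten is your step (i): the reduction to the single $x_3$-parallel edge does not follow from $\sum_\node\partial_3\phi_\node=0$ alone, but from the geometric fact that for each of the two remaining nodes the opposite face of $T$ contains $e_{3,T}$ and is therefore parallel to the $x_3$-axis, so $\partial_3\phi_\node=0$ for those nodes individually.
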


\begin{proof}
  We observe that $T$ has an edge $e_T$ parallel to the singular edge,
  and so, parallel to the $x_3$-axis. Since $D_hu$ is linear on $T$,
  we have $\partial_3D_hu|_T=\partial_3D_hu|_{e_T}$. If $e_T$ is
  contained on the singular edge, then $\partial_3D_hu|_T=0$ since
  $D_hu|_{e_T}=u|_{e_T}=0$ and we are done. Now, consider the case
  that $e_T$ is not contained in a singular edge and denote its endpoints
  by $\node_1$ and $\node_2$ such that
  $\partial_3\phi_{\node_1}\big|_T=-h_{3,T}^{-1}$ and
  $\partial_3\phi_{\node_2}\big|_T=h_{3,T}^{-1}$. Then we have
  \[
    \partial_3D_hu = h_{3,T}^{-1} \left[
    \int_{\sigma_{\node_2}} u\psi_{\node_2} -
    \int_{\sigma_{\node_1}} u\psi_{\node_1} \right]
  \]
  We observe now that by our assumptions $\sigma_{\node_1}$ and
  $\sigma_{\node_2}$ have the same projection $\sigma_T$ into the
  $x_1x_2$-plane and hence form two opposite edges of a plane
  quadrilateral which is parallel to the $x_3$-axis and which we will
  denote by $F_T$. We note further that $\psi_{\node_1}$ and
  $\psi_{\node_2}$ can be considered as the same function $\psi_T$
  defined on $\sigma_T$ and
  $\|\psi_T\|_{L^\infty(\sigma_T)}=C|\sigma_T|^{-1}$.  With this
  insight we obtain
  \begin{align*}
    |\partial_3D_hu| &= h_3^{-1} \left|
    \int_{\sigma_{\node_2}} u\psi_{\node_2} -
    \int_{\sigma_{\node_1}} u\psi_{\node_1} \right| =
    h_3^{-1} \left|\int_{F_T} \partial_3u\psi_T \right| \\ &\le
    Ch_3^{-1} |\sigma_T|^{-1} \|\partial_3u\|_{L^1(F_T)} \le
    C |F_T|^{-1} \|\partial_3u\|_{L^1(F_T)}.
  \end{align*}
  We integrate this estimate over $T$, apply the standard trace theorem
  \[
    \|v\|_{L^1(F_T)} \le C|F_T||S_T|^{-1} \sum_{|\alpha|\le1}
    h_T^\alpha \|D^\alpha v\|_{L^1(S_T)}
  \]
  and obtain the desired estimate.
\end{proof}

We are now prepared to estimate the interpolation error for the flat
elements occurring far away from the singular edge in cases 3 and 4.

\begin{lemma}[anisotropic flat element, full regularity]
  \label{lem:aniso_flat_regular}
  If $T$ is an anisotropic flat element ($h_{3,T}\le h_{1,T}=
  h_{2,T}$) then the local interpolation error estimate
  \begin{align}
    \label{eq:interrest_anisoH2flat} |u-D_hu|_{H^1(T)} &\le
    Ch_T|u|_{H^2(S_T)}
  \end{align}
  holds provided that $u\in H^2(S_T)$. 
  (Remember that $h_T=\mathrm{diam}(T)$.)
\end{lemma}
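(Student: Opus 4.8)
The plan is to estimate $\|\partial_iD_hu\|_{L^2(T)}$ for $i=1,2,3$ separately, as in the discussion preceding the lemma: the component parallel to the singular edge is controlled by Lemma~\ref{lem:stability_x3}, while for the two transversal components an inverse inequality in the long directions $h_{1,T}=h_{2,T}=h_T$ is harmless. Combining both, I would first prove, for every $v\in W^{2,1}(S_T)$, the stability estimate
\[
  |D_hv|_{H^1(T)} \le C|S_T|^{-1/2}\Bigl(h_T^{-1}\|v\|_{L^1(S_T)}+|v|_{W^{1,1}(S_T)}+h_T|v|_{W^{2,1}(S_T)}\Bigr).
\]
For $\|\partial_3D_hv\|_{L^2(T)}$ this is exactly Lemma~\ref{lem:stability_x3} after bounding $\sum_{|\alpha|\le1}h_T^{|\alpha|}\|D^\alpha\partial_3v\|_{L^1(S_T)}\le|v|_{W^{1,1}(S_T)}+h_T|v|_{W^{2,1}(S_T)}$. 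For $j=1,2$ I would start from $\partial_jD_hv|_T=\sum_{\node\in\NT}\bigl(\int_{\sigma_\node}v\,\psi_\node\bigr)\partial_j\phi_\node$, use $\|\psi_\node\|_{L^\infty(\sigma_\node)}=C|\sigma_\node|^{-1}$, the anisotropic bound $|\partial_j\phi_\node|\le Ch_{j,T}^{-1}=Ch_T^{-1}$ (valid because in macroelements of type $3$ and $4$ the elements are aligned with the $x^{(\ell)}$-coordinate directions), and the anisotropic trace estimate
\[
  \|v\|_{L^1(\sigma_\node)} \le C|\sigma_\node|\,|S_T|^{-1}\sum_{|\alpha|\le2}h_T^{|\alpha|}\|D^\alpha v\|_{L^1(S_T)},
\]
which follows because $\sigma_\node$ lies in a plane of $\mathcal P_\ell$ and hence in a face $F$ of the prism $S_T$, so one composes a trace from $S_T$ onto $F$ (integrating across the short direction of $S_T$) with a two-dimensional trace within $F$ onto $\sigma_\node$. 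Since $\partial_j\phi_\node$ is constant on $T$, $\|\partial_j\phi_\node\|_{L^2(T)}=|\partial_j\phi_\node|\,|T|^{1/2}$; the factor $|\sigma_\node|^{-1}$ cancels the $|\sigma_\node|$ of the trace estimate, $|T|^{1/2}|S_T|^{-1}\le C|S_T|^{-1/2}$ by $|T|\le C|S_T|$, and summing over the $O(1)$ nodes of $T$ yields the claimed bound.

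Next I would take $w\in\mathcal P_1$ to be an averaged Taylor polynomial of $u$ on $S_T$. Since $T$ occurs far from the singular edge it has no node on a singular edge, so if it also has no coupling node then $D_hw=w$ on $T$ and, by Deny--Lions type estimates on $S_T$ (see e.g.\ \cite{dupont:80}), $\|u-w\|_{L^1(S_T)}\le Ch_T^2|u|_{W^{2,1}(S_T)}$, $|u-w|_{W^{1,1}(S_T)}\le Ch_T|u|_{W^{2,1}(S_T)}$ and $|u-w|_{H^1(S_T)}\le Ch_T|u|_{H^2(S_T)}$. Applying the stability estimate to $v=u-w$ and using $|u|_{W^{2,1}(S_T)}=\|D^2u\|_{L^1(S_T)}\le C|S_T|^{1/2}|u|_{H^2(S_T)}$ (Cauchy--Schwarz), each of the three terms is bounded by $Ch_T|u|_{H^2(S_T)}$, so with the triangle inequality $|u-D_hu|_{H^1(T)}\le|u-w|_{H^1(S_T)}+|D_h(u-w)|_{H^1(T)}$ we obtain \eqref{eq:interrest_anisoH2flat}. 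If $T$ contains a coupling node $\node$, then, exactly as at the end of the proof of Lemma~\ref{lem:iso_regular}, I would add the missing term $(\Pi_{\sigma_\node}u)(\node)\phi_\node$ so that $D_hu+(\Pi_{\sigma_\node}u)(\node)\phi_\node$ reproduces $\mathcal P_1$, apply the argument above to this modified operator, and estimate the additional term by comparison with the Lagrange interpolant $I_Tu$ on $T$, for which $(\Pi_{\sigma_\node}I_Tu)(\node)=u(\node)=0$.

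The main obstacle is establishing the two anisotropic auxiliary facts uniformly in the aspect ratio $h_T/h_{3,T}$: the directional estimate $|\partial_j\phi_\node|\le Ch_{j,T}^{-1}$ and, above all, the anisotropic trace inequality on $S_T$ together with the Deny--Lions estimates on $S_T$ with constants that do not blow up as $h_{3,T}/h_T\to0$. Both are consequences of the very construction of $S_T$ as a prism built from $O(1)$ elements that are shape-regular in the anisotropic sense and whose short direction is transversal to the face $F$ containing all the relevant edges $\sigma_\node$; once this geometry is exploited, the rest is the routine scaling bookkeeping indicated above.
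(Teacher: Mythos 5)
Your overall route coincides with the paper's: the $x_3$-derivative is controlled by Lemma \ref{lem:stability_x3}, the transversal derivatives by an isotropic-style argument (the inverse estimate only costs $h_{1,T}^{-1}=h_{2,T}^{-1}\sim h_T^{-1}$, which is harmless for a flat element), followed by a Deny--Lions argument. The composed trace $S_T\to F\to\sigma_\node$ you sketch is exactly the mechanism of \eqref{trace:edge-face}--\eqref{trace:face-element}, and the uniformity you worry about in the last paragraph is supplied by the anisotropic Deny--Lions estimate (Lemma~1 of \cite{apel:97b}) that the paper invokes elsewhere, since $h^\alpha\le h_T^{|\alpha|}$. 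The paper differs only cosmetically in the regular case: for $\partial_3$ it picks $w$ with $\int_{S_T}\partial_3(u-w)=0$ and uses Poincar\'e--Friedrichs rather than folding everything into one stability bound.

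The gap is in the coupling-node case. You propose to repeat the end of the proof of Lemma \ref{lem:iso_regular} ``exactly'', but that argument bounds the correction term by $Ch_T^{-1}\left(\|u-I_Tu\|_{L^2(T)}+h_T|u-I_Tu|_{H^1(T)}+h_T^2|u|_{H^2(T)}\right)$ and then uses $|u-I_Tu|_{H^1(T)}\le Ch_T|u|_{H^2(T)}$ --- precisely the Lagrange estimate that fails on anisotropic elements (the failure of \eqref{eq:locestW2p} for $p=2$ is the whole reason a quasi-interpolant is needed). The paper flags this explicitly and repairs it in two steps that are missing from your proposal: (i) the $\partial_3$-component is immune to the coupling node, because a coupling node of a flat element is not an endpoint of the edge $e_{3,T}$ parallel to the singular edge, so $\partial_3\phi_\node=0$ for that node (equivalently, the paper's $w$ can be chosen to vanish at the three nodes off $e_{3,T}$); and (ii) for the remaining directions only the componentwise estimates $\|\partial_i(u-I_Tu)\|_{L^2(T)}\le Ch_T|u|_{H^2(T)}$, $i=1,2$, which do hold anisotropically \cite{apel:92}, may be used. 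Without (i) your correction term would even pick up a factor $\|\partial_3\phi_\node\|_{L^2(T)}\sim h_{3,T}^{-1}|T|^{1/2}$ and produce $h_T^2/h_{3,T}$ instead of $h_T$. The gap is localized and repairable, but as written the coupling-node step does not go through.
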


\begin{proof}
  The proof for $\partial_3(u-D_hu)$ can be done on the basis of Lemma
  \ref{lem:stability_x3}. Assume for the moment that the element $T$
  does not contain a coupling node. Similar to the proof of Lemma
  \ref{lem:iso_regular} we obtain for any $w\in\mathcal P_1$
  \begin{align*}
    \|\partial_3(u-D_hu)\|_{L^2(T)} &=
    \|\partial_3(u-w)-\partial_3D_h(u-w)\|_{L^2(T)} \\ &\le
    C\|\partial_3(u-w)\|_{L^2(S_T)} + C\sum_{|\alpha|=1} h_T^\alpha
    \|D^\alpha\partial_3 u\|_{L^2(S_T)}.
  \end{align*}
  We choose now $w\in\mathcal P_1$ such that the constant
  $\partial_3w$ satisfies $\int_{S_T}\partial_3(u-w)=0$ and such that we can
  conclude by using the Poincar\'e--Friedrichs inequality (or again a
  Deny--Lions type argument)
  \[
    \|\partial_3(u-w)\|_{L^2(S_T)} \le C\sum_{|\alpha|=1} h_T^\alpha
    \|D^\alpha\partial_3 u\|_{L^2(S_T)}
  \]
  and hence
  \[
    \|\partial_3(u-D_hu)\|_{L^2(T)} \le C\sum_{|\alpha|=1} h_T^\alpha
    \|D^\alpha\partial_3 u\|_{L^2(S_T)} \le Ch_T|u|_{H^2(S_T)}.
  \]
  Note that the polynomial $w$ can be chosen such that it vanishes in
  three nodes of $T$. It is completely described by choosing the
  appropriate value at one endpoint of the edge of $T$ which is
  parallel to the $x_3$-axis. Since a possible coupling node is not an
  endpoint of this edge, the argument above can also be used in the
  case of coupling nodes.

  For the other directions we can proceed as in the proof of Lemma
  \ref{lem:iso_regular}.
  In the case of coupling nodes the interpolation error estimate
  $|u-I_{T}u|_{H^1(T)} \le C h_T|u|_{H^2(T)}$ is used there which does
  not hold for anisotropic elements. However, the estimate
  $\|\partial_i (u-I_{T}u)\|_{L^2(T)} \le C h_T|u|_{H^2(T)}$, $i=1,2$,
  does hold, see for example \cite{apel:92}.
\end{proof}

It remains to prove interpolation error estimates for needle elements
such that we will assume $h_{1,T}= h_{2,T}\le Ch_{3,T}$ for the next
lemmas.

\begin{lemma}[Stability in direction perpendicular to singular edge,
  an\-iso\-tropic needle element away from singular edge]
  \label{lem:stability_st1} Assume that the element $T$ does not
  contain a node $\node\in\Ns$ and that $h_{1,T}= h_{2,T}\le
  Ch_{3,T}$. Then for $i=1,2$ we have
  \begin{align} \label{4.37}
    \|\partial_i D_hu\|_{L^2(T)} &\le C  \left( |u|_{H^1(S_T)} +
      h_{3,T}|\partial_3 u|_{H^1(S_T)}\right)
  \end{align}
  provided that $u\in H^1(S_T)$ and $\partial_3u\in H^1(S_T)$.
\end{lemma}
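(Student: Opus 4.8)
The strategy is to mimic the treatment of the isotropic case in Lemma~\ref{lem:iso_regular}, but to exploit the prismatic structure of $S_T$ guaranteed by the construction of the edges $\sigma_\node$ and the sets $S_T$ for macroelements of types 3 and 4, so that no inverse inequality is invoked. First I would fix a direction $i\in\{1,2\}$ and write $\partial_i D_h u|_T$ in terms of the representation \eqref{eq:defDhnew}, namely $\partial_i D_h u|_T=\sum_{\node\in\NT}\bigl(\int_{\sigma_\node}u\psi_\node\bigr)\partial_i\phi_\node$. Since $T$ contains no node of $\Ns$, the coefficients $\int_{\sigma_\node}u\psi_\node$ are all of the form $(\Pi_{\sigma_\node}u)(\node)$ with $\sigma_\node$ an edge lying in some plane of $\mathcal P_\ell$, and the key point is that $D_h$ reproduces linear polynomials on $T$ (there is no coupling node here by assumption, since nodes on coupling faces are excluded from $\NT$ — wait, actually $T$ may still have a coupling node; if so one uses the same trick as in Lemma~\ref{lem:iso_regular}, subtracting $(\Pi_{\sigma_\node}u)(\node)\phi_\node$ and estimating it separately with the anisotropic Lagrange interpolation bound $\|\partial_i(u-I_Tu)\|_{L^2(T)}\le Ch_T|u|_{H^2(T)}$, but for the stability estimate as stated we only need the raw bound).

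Next I would derive the basic stability estimate. Using \eqref{eq:L2sigma}, the bound $|\phi_\node|_{H^1(T)}\le Ch_T^{-1}|T|^{1/2}$ is too crude for anisotropic elements; instead I would use that $|\partial_i\phi_\node|\le Ch_{i,T}^{-1}$ on $T$, hence $\|\partial_i\phi_\node\|_{L^2(T)}\le Ch_{i,T}^{-1}|T|^{1/2}$. Combining with $\|u\|_{L^1(\sigma_\node)}\|\psi_\node\|_{L^\infty(\sigma_\node)}\le C|\sigma_\node|^{-1}\|u\|_{L^1(\sigma_\node)}$ and an anisotropic trace theorem transporting $\|u\|_{L^1(\sigma_\node)}$ to the prism $S_T$ — of the form $\|u\|_{L^1(\sigma_\node)}\le C|\sigma_\node|\,|S_T|^{-1}\sum_{|\alpha|\le1}h_T^\alpha\|D^\alpha u\|_{L^1(S_T)}$ (with $h_T^\alpha$ the anisotropic multi-index power, so that the contribution of a derivative in the $x_3$-direction carries $h_{3,T}$ and those in $x_1,x_2$ carry $h_{1,T}=h_{2,T}$) — together with $|S_T|\le C h_{1,T}^2 h_{3,T}$ and $h_{i,T}=h_{1,T}\le Ch_{3,T}$, I would get
\[
  \|\partial_i D_h u\|_{L^2(T)}\le C\Bigl(h_{1,T}^{-1}\|u\|_{L^1(S_T)}|S_T|^{-1/2}+\cdots\Bigr).
\]
Then, since $D_h$ reproduces $\mathcal P_1$ on $T$ (modulo the coupling-node correction), I would replace $u$ by $u-w$ for a well-chosen linear $w$ and apply a Deny--Lions/Poincaré--Friedrichs argument \emph{on the prism $S_T$} to absorb the $L^1$-norms of $u-w$ and its first derivatives into the second-order seminorm. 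The subtle point here is which second derivatives survive: because $w$ is linear we control $\|u-w\|_{L^1(S_T)}$ and $|u-w|_{W^{1,1}(S_T)}$ by the full Hessian, but the stated right-hand side only involves $|u|_{H^1(S_T)}$ and $h_{3,T}|\partial_3u|_{H^1(S_T)}$ — so in fact one should \emph{not} subtract a full linear polynomial but only adjust $u$ by a constant (or by a function linear in $x_3$ alone), keeping $|u-c|_{W^{1,1}}=|u|_{W^{1,1}}$ and then Poincaré in the long direction to bound $\|u-c\|_{L^1(S_T)}\le Ch_{3,T}|u|_{W^{1,1}(S_T)}$, which combined with $h_{1,T}^{-1}\cdot|S_T|^{-1/2}\cdot h_{3,T}\cdot h_{1,T}^2h_{3,T}\le C$ and the conversion $\|\cdot\|_{L^1(S_T)}\le C|S_T|^{1/2}\|\cdot\|_{L^2(S_T)}$ yields the two terms $|u|_{H^1(S_T)}$ and $h_{3,T}|\partial_3u|_{H^1(S_T)}$ on the right, because the derivative $\partial_3(u-w)$ picks up an extra $h_{3,T}$ relative to $\partial_1,\partial_2$ in the anisotropic trace bound.

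The main obstacle I anticipate is bookkeeping the anisotropic scaling correctly: one must track separately the powers of $h_{1,T}$ and $h_{3,T}$ through the trace theorem and the volume estimate $|S_T|\sim h_{1,T}^2h_{3,T}$, and choose the subtracted polynomial $w$ (a function of $x_3$ only, so that $\partial_i(u-w)=\partial_i u$ for $i=1,2$ and $\partial_3(u-w)$ has mean zero over $S_T$) so that exactly the combination $|u|_{H^1(S_T)}+h_{3,T}|\partial_3u|_{H^1(S_T)}$ appears and nothing worse. A secondary technical point is justifying the anisotropic trace inequality on the prism $S_T$ uniformly in the mesh: since $S_T$ is a prism with non-parallel top and bottom faces (lying in two planes of $\mathcal P_\ell$) but all lateral faces parallel to the singular edge, one verifies it by an affine map to a reference prism whose geometry is controlled by the shape-regularity assumptions within each plane of $\mathcal P_\ell$, exactly as in \cite{apel:92,apel:97c}. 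Once the anisotropic trace estimate and the anisotropic Deny--Lions argument are in place, the stability bound \eqref{4.37} follows by collecting terms.
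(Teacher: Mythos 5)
There is a genuine gap, and it sits exactly at the point where the lemma is delicate. Your plan estimates $\partial_i D_hu$ node by node, transports $\|u\|_{L^1(\sigma_\node)}$ to $S_T$ by a single edge-to-volume trace inequality involving only first derivatives, and then subtracts a constant using a Poincar\'e inequality ``in the long direction'', $\|u-c\|_{L^1(S_T)}\le Ch_{3,T}|u|_{W^{1,1}(S_T)}$. Two things go wrong. First, an $L^1$-trace on an edge (codimension two in $\R^3$) controlled by $\sum_{|\alpha|\le1}h_T^\alpha\|D^\alpha u\|_{L^1(S_T)}$ is not available: $W^{1,1}$-functions have no trace on lines, which is precisely why the paper performs the trace in two steps (edge to face via \eqref{trace:edge-face}, face to prism via \eqref{trace:face-element}), at the cost of second derivatives of the form $\partial_3 D^\alpha u$ --- consistent with the term $h_{3,T}|\partial_3u|_{H^1(S_T)}$ in \eqref{4.37}. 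Second, and more seriously, your scaling does not close: the factor coming from $\|\partial_i\phi_\node\|_{L^2(T)}\le Ch_{1,T}^{-1}|T|^{1/2}$ paired with $\|u-c\|_{L^1(S_T)}\le Ch_{3,T}|u|_{W^{1,1}(S_T)}$ leaves the ratio $h_{3,T}/h_{1,T}$ in front of $|u|_{H^1(S_T)}$, which is unbounded on needle elements; the product you invoke, $h_{1,T}^{-1}\cdot|S_T|^{-1/2}\cdot h_{3,T}\cdot h_{1,T}^2h_{3,T}$, is not dimensionless and is not bounded by a constant. (The ratio $h_{3,T}/h_{1,T}$ is exactly the term that does appear in Lemma \ref{lem:stability_st2} for elements touching the singular edge and must be absorbed there by weighted norms; the whole point of Lemma \ref{lem:stability_st1} is that it can be avoided away from that edge.)

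The missing idea is structural. One should first estimate the directional derivatives $\partial_{s_T}D_hu$ along the short edges $s_{1,T},s_{2,T}$ of $T$: then $\partial_{s_T}\phi_\node=0$ except for the two endpoints $\node^1,\node^2$ of $s_T$, and --- by the construction of the edges $\sigma_\node$ and of the prismatic set $S_T$ --- the edges $\sigma_{\node^1}$ and $\sigma_{\node^2}$ lie on one and the same top or bottom face $F_T$ of $S_T$, a face which is isotropic of diameter $\sim h_{1,T}$ and transversal to the $x_3$-axis. Subtracting the average $w$ of $u$ over $F_T$ and applying the Poincar\'e inequality \emph{on that two-dimensional face} gives $\|u-w\|_{L^1(\sigma_\node)}\le C|\sigma_\node|\,|F_T|^{-1}h_{1,T}|u|_{W^{1,1}(F_T)}$; the factor $h_{1,T}$ produced here cancels the $h_{1,T}^{-1}$ from $\|\partial_{s_T}\phi_\node\|_{L^\infty(T)}$, and the subsequent face-to-prism trace \eqref{trace:face-element}, applied to the first derivatives of $u$, yields exactly $|u|_{W^{1,1}(S_T)}+h_{3,T}|\partial_3u|_{W^{1,1}(S_T)}$. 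Finally $\partial_1,\partial_2$ are recovered as bounded linear combinations of $\partial_{s_{1,T}},\partial_{s_{2,T}},\partial_3$ via the maximal angle condition, the $\partial_3$ contribution being covered by Lemma \ref{lem:stability_x3}. Without passing to the short-edge directions (so that only two nodes sharing a single transversal face contribute) and without performing the Poincar\'e step on that face rather than in the volume, the bad ratio $h_{3,T}/h_{1,T}$ cannot be eliminated.
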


\begin{proof}
  For each node $\node\in\NT$ we denote by $F_{\node,T}$ the top or
  bottom face of the prismatic domain $S_T$ such that $\node\in\overline
  F_{\node,T}$.  Observe that we have $\sigma_\node\subset\overline
  F_{\node,T} \subset \overline S_T$ for all $\node\in \NT$. Observe
  further that $F_{\node,T}$ is isotropic with diameter of order
  $h_{1,T}$ and recall the standard trace inequality
  \begin{equation}\label{trace:edge-face}
    \|v\|_{L^1(\sigma_\node)}\le C |\sigma_\node|
    |F_{\node,T}|^{-1} \left(\|v\|_{L^1(F_{\node,T})} +
      h_{1,T} |v|_{W^{1,1}(F_{\node,T})}\right)
  \end{equation}
  for all $v\in W^{1,1}(F_{\node,T})$. We need also the trace
  inequality
  \begin{equation}\label{trace:face-element}
    \|v\|_{L^1(F_{\node,T})}\le
    C|F_{\node,T}||S_T|^{-1}\left(\|v\|_{L^1(S_T)} +
      h_{3,T}\|\partial_3 v\|_{L^1(S_T)}\right)
  \end{equation}
  which can be proved by using Lemma \ref{lem:trace:prism} from page
  \pageref{lem:trace:prism} and the facts that $S_T$ is a union of
  prisms, and $F_{\node,T}$ is a face of $S_T$.

  Let $s_T$ be one of the short edges of~$T$ and denote its endpoints
  by $\node^1$ and $\node^2$. We use the same notation $s_T$ for the direction
  of this edge in order to denote by $\partial_{s_T}v=\nabla v\cdot
  s_T/|s_T|$ the directional derivative. In the following we first
  estimate $\|\partial_{s_T}D_hu\|_{L^2(T)}$. After that, the desired
  estimates \eqref{4.37} easily follow as we will show.

  Notice that if $\node\in\NT\setminus \{\node^1,\node^2\}$ we have
  $\partial_{s_T}\phi_\node=0$, and if $\node\in \{\node^1,\node^2\}$ then
  $\|\partial_{s_T}\phi_\node\|_{L^\infty(T)}= |s_T|^{-1}\le
  Ch_{1,T}^{-1}$. For all $w\in P_0(S_T)$ we have (and here we use
  that the element does not contain a node
  $\node\in\Nc\cup\Ns$)
  \begin{align}
    \|\partial_{s_T} D_hu\|_{L^2(T)} & = \|\partial_{s_T}
    D_h(u-w)\|_{L^2(T)}\nonumber \\ \nonumber &\le \sum_{\node\in
      \NT\cap s_T} \left|\int_{\sigma_{\node}}(u-w)\psi_{\node}\right|
    \|\partial_{s_T}\phi_{\node}\|_{L^2(T)}\\ \nonumber &\le C
    h_{1,T}^{-1}|T|^{1/2} \sum_{\node\in\NT\cap s_T}
    \|u-w\|_{L^1(\sigma_{\node})}\|\psi_{\node}\|_{L^\infty(\sigma_{\node})} \\
    &\le C h_{1,T}^{-1}|T|^{1/2} \sum_{\node\in\NT\cap s_T}
    |\sigma_\node|^{-1} \|u-w\|_{L^1(\sigma_\node)}. \label{eq:oben}
  \end{align}
  From the trace inequality \eqref{trace:edge-face} we have for each
  $\node\in\NT\cap s_T$
  \[
  \|u-w\|_{L^1(\sigma_\node)}\le C|\sigma_\node|
  |F_{\node,T}|^{-1}\left(\|u-w\|_{L^1(F_{\node,T})} +
    h_{1,T}|u|_{W^{1,1}(F_{\node,T})}\right).
  \]
  Since the definition of $F_{\node,T}$ implies
  $F_{\node^1}=F_{\node^2}=:F_T$, we have
  \begin{align*}
    \|u-w\|_{L^1(\sigma_\node)} \le C|\sigma_\node| |F_T|^{-1} \left(
      \|u-w\|_{L^1(F_T)} + h_{1,T} |u|_{W^{1,1}(F_T)} \right).
  \end{align*}
  Now we choose $w$ as the average of $u$ on $F_T$ and use
  a Poincar\'e type inequality on $F_T$ to get
  \begin{align*}
    \|u-w\|_{L^1(\sigma_\node)} \le C|\sigma_\node| |F_T|^{-1} h_{1,T}
    |u|_{W^{1,1}(F_T)}.
  \end{align*}
  Therefore we arrive at
  \begin{align}\nonumber 
    \|\partial_{s_T} D_hu\|_{L^2(T)} &\le C |T|^{1/2} |F_T|^{-1}
    |u|_{W^{1,1}(F_T)} \\\nonumber &\le C |T|^{1/2} |S_T|^{-1} \left(
      |u|_{W^{1,1}(S_T)} + h_{3,T}|\partial_3 u|_{W^{1,1}(S_T)}\right)
    \\ \label{eq:dstDhu_aniso} &\le C |S_T|^{-1/2} \left( |u|_{W^{1,1}(S_T)} +
      h_{3,T}|\partial_3 u|_{W^{1,1}(S_T)}\right)
    \\ \nonumber &\le C  \left( |u|_{H^1(S_T)} +
      h_{3,T}|\partial_3 u|_{H^1(S_T)}\right)
  \end{align}
  where we used again the trace inequality \eqref{trace:face-element}.
  \enlargethispage*{3ex}

  Now, let $s_{1,T}$ and $s_{2,T}$ be two different short edges (edge
  vectors) of $T$ such that the determinant of the matrix made up of
  $\frac{s_{1,T}}{|s_{1,T}|}$, $\frac{s_{2,T}}{|s_{2,T}|}$ and ${\bf
    e_3}$ as columns is greater than a constant depending only the
  maximum angle of $T$. Note that this is possible due to the maximal
  angle condition, see \cite{jamet:76}. Then, if the canonical vector
  $\mathrm{e}_i$, $i=1,2$, is expressed as
  \[
    \mathrm{e}_i = c_{1,i} \frac{s_{1,T}}{|s_{1,T}|} + c_{2,i}
    \frac{s_{2,T}}{|s_{2,T}|} + c_{3,i} \mathrm{e}_3,
  \]
  it follows that $c_{1,i}$, $c_{2,i}$ and $c_{3,i}$ are bounded by
  above by a constant depending only on the maximum angle condition.
  Since
  \[
    \partial_i = c_{1,i} \partial_{s_{1,T}} + c_{2,i}
    \partial_{s_{2,T}} + c_{3,i} \partial_3
  \]
  we obtain \eqref{4.37} from \eqref{eq:dstDhu_aniso} with
  $s_T=s_{1,T}$ and $s_T=s_{2,T}$, Lemma \ref{lem:stability_x3}, and
  recalling that $h_{1,T}= h_{2,T}\le C h_{3,T}$.
\end{proof}

\begin{lemma}[Stability in direction perpendicular to singular edge,
  an\-iso\-tropic needle element at the singular edge]
  \label{lem:stability_st2} Assume that the element $T$ contains at
  least one node  $n\in\Ns$ and that $h_{1,T}= h_{2,T}\le C
  h_{3,T}$.  Then we have for $i=1,2$
  \begin{align} \nonumber
    \lefteqn{\|\partial_i D_hu\|_{L^2(T)}}  \\ \label{eq:stab_aniso_part2}
    & \le C|S_T|^{-1/2}\left(|u|_{W^{1,1}(S_T)} +
      \frac{h_{3,T}}{h_{i,T}} \|\partial_3u\|_{L^1(S_T)} +
      \sum_{|\alpha|=1} h_T^\alpha |D^\alpha u|_{W^{1,1}(S_T)}
    \right)
  \end{align}
  provided that $u\in W^{2,1}(S_T)$.
\end{lemma}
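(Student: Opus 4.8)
The plan is to follow the arguments of Lemmas~\ref{lem:stability_x3} and~\ref{lem:stability_st1}; the only genuinely new point is that a constant can no longer be subtracted freely, since $D_h$ does not reproduce constants on an element $T$ that carries a node of $\Ns$ (the corresponding nodal basis functions are absent from \eqref{eq:defDhnew}). The remedy is to subtract instead a \emph{linear} polynomial that vanishes on the singular edge. Fixing the coordinates so that the singular edge lies on the $x_3$-axis, set $w(x):=\overline{\partial_1 u}\,x_1+\overline{\partial_2 u}\,x_2$, where $\overline{\partial_i u}$ denotes the mean of $\partial_i u$ over $S_T$. As $w$ is linear and vanishes on the singular edge, it vanishes at every node of $T$ in $\Ns$; the computation that gives $D_h v=v$ on elements without excluded nodes then shows, through \eqref{eq:defDhnew}, that $D_h w|_T=w|_T$, hence $\partial_i D_h u=\partial_i w+\partial_i D_h\tilde u$ on $T$ with $\tilde u:=u-w$. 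The term $\partial_i w$ is the constant $\overline{\partial_i u}$, and $\|\partial_i w\|_{L^2(T)}\le|S_T|^{-1}\|\partial_i u\|_{L^1(S_T)}|T|^{1/2}\le C|S_T|^{-1/2}|u|_{W^{1,1}(S_T)}$, which yields the first term of \eqref{eq:stab_aniso_part2}.

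For $\partial_i D_h\tilde u$ I would proceed exactly as in Lemma~\ref{lem:stability_st1}: using the maximal angle condition, write $\partial_i=c_{1,i}\partial_{s_{1,T}}+c_{2,i}\partial_{s_{2,T}}+c_{3,i}\partial_3$ with bounded coefficients and short edge directions $s_{1,T},s_{2,T}$, so that only $\partial_3 D_h\tilde u$ and the $\partial_{s_{j,T}}D_h\tilde u$ remain. The first is controlled by Lemma~\ref{lem:stability_x3} (applied to $\tilde u$, with $\partial_3\tilde u=\partial_3 u$ and $D^\alpha\partial_3\tilde u=D^\alpha\partial_3 u$ for $|\alpha|=1$) by $C|S_T|^{-1/2}(\|\partial_3 u\|_{L^1(S_T)}+\sum_{|\alpha|=1}h_T^\alpha|D^\alpha u|_{W^{1,1}(S_T)})$, which, using $h_{3,T}\ge h_{i,T}$, fits into the right-hand side of \eqref{eq:stab_aniso_part2}. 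Since a short edge of $T$ meets the singular edge in at most one endpoint, two cases remain for $s_{j,T}=\node\node'$. If neither endpoint is in $\Ns$, then $\partial_{s_{j,T}}D_h(\mathrm{const})=0$, and Lemma~\ref{lem:stability_st1} applied to $\tilde u$ (subtracting the mean of $\tilde u$ over the appropriate transversal face of $S_T$) gives $\|\partial_{s_{j,T}}D_h\tilde u\|_{L^2(T)}\le C|S_T|^{-1/2}(|\tilde u|_{W^{1,1}(S_T)}+h_{3,T}|\partial_3\tilde u|_{W^{1,1}(S_T)})$. If $\node\in\Ns$, then $D_h\tilde u(\node)=0$, hence $\partial_{s_{j,T}}D_h\tilde u|_T=(\Pi_{\sigma_{\node'}}\tilde u)(\node')/|s_{j,T}|$; estimating $|(\Pi_{\sigma_{\node'}}\tilde u)(\node')|\le C|\sigma_{\node'}|^{-1}\|\tilde u\|_{L^1(\sigma_{\node'})}$, using $|s_{j,T}|\ge c\,h_{1,T}$, the trace inequalities \eqref{trace:edge-face}, \eqref{trace:face-element} (noting that $\sigma_{\node'}$ lies in a transversal face of $S_T$) and $|T|^{1/2}|S_T|^{-1}\le|S_T|^{-1/2}$ brings $\|\partial_{s_{j,T}}D_h\tilde u\|_{L^2(T)}$ down to $C|S_T|^{-1/2}$ times $h_{1,T}^{-1}\|\tilde u\|_{L^1(S_T)}$ plus terms of the types $h_{3,T}h_{1,T}^{-1}\|\partial_3 u\|_{L^1(S_T)}$, $|\tilde u|_{W^{1,1}(S_T)}$, and $h_{3,T}\sum_{|\alpha|=1}|D^\alpha u|_{W^{1,1}(S_T)}$.

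It remains to insert two estimates for $\tilde u$. The routine one, $|\tilde u|_{W^{1,1}(S_T)}\le C(h_{3,T}|u|_{W^{2,1}(S_T)}+\|\partial_3 u\|_{L^1(S_T)})$, follows from Poincar\'e's inequality on $S_T$ (of diameter $\sim h_{3,T}$) applied to $\partial_i u-\overline{\partial_i u}$, $i=1,2$, since $\partial_3\tilde u=\partial_3 u$. The decisive one --- the step I expect to be the main obstacle --- is $\|\tilde u\|_{L^1(S_T)}\le C\,h_{1,T}h_{3,T}\,|u|_{W^{2,1}(S_T)}$, which must use that $\tilde u$ vanishes on the singular edge (recall that $D_h$ is in fact applied to the remainder $u_R$ of \eqref{eq:splitting}, which vanishes there). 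A plain Poincar\'e or Hardy bound is not available, because the singular edge has codimension two and first derivatives do not control $W^{2,1}$-functions on such a set. Instead I would slice $S_T$ by the planes $x_3=\mathrm{const}$; on each two-dimensional section $\Sigma$ (of diameter $\sim h_{1,T}$, having the singular-edge point $o$ as a corner, where $\tilde u=0$), combine $\|\eta-\overline\eta\|_{L^1(\Sigma)}\le C h_{1,T}|\eta|_{W^{1,1}(\Sigma)}$ with the Bramble--Hilbert bound $|\overline\eta-\eta(o)|\le C(h_{1,T}^{-1}|\eta|_{W^{1,1}(\Sigma)}+|\eta|_{W^{2,1}(\Sigma)})$ for the functional $\eta\mapsto\overline\eta-\eta(o)$, which annihilates constants and is bounded on $W^{2,1}(\Sigma)$ by the borderline two-dimensional Sobolev embedding $W^{2,1}(\Sigma)\hookrightarrow C(\overline\Sigma)$. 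This gives $\|\eta\|_{L^1(\Sigma)}\le C(h_{1,T}|\eta|_{W^{1,1}(\Sigma)}+h_{1,T}^2|\eta|_{W^{2,1}(\Sigma)})$ for $\eta=\tilde u(\cdot,x_3)$, and integrating in $x_3$ (together with the Poincar\'e estimate for the transversal first derivatives of $\tilde u$) yields the claim. Feeding both estimates back in turns the $h_{1,T}^{-1}\|\tilde u\|_{L^1(S_T)}$ contribution into $C|S_T|^{-1/2}h_{3,T}|u|_{W^{2,1}(S_T)}$ and every other contribution into the right-hand side of \eqref{eq:stab_aniso_part2}; adding the bound for $\partial_i w$ completes the proof. (If $T$ also meets a coupling face, the coupling node is handled by the extra correction argument used in Lemma~\ref{lem:iso_regular}.)
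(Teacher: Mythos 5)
Your overall strategy is viable and genuinely different from the paper's, but as written the bookkeeping does not close. The right-hand side of \eqref{eq:stab_aniso_part2} weights the purely transversal second derivatives only by $h_{1,T}$: the sum $\sum_{|\alpha|=1}h_T^\alpha|D^\alpha u|_{W^{1,1}(S_T)}$ contains $h_{1,T}\|\partial_{11}u\|_{L^1(S_T)}$ but not $h_{3,T}\|\partial_{11}u\|_{L^1(S_T)}$. Both of your auxiliary estimates, $|\tilde u|_{W^{1,1}(S_T)}\le C\bigl(h_{3,T}|u|_{W^{2,1}(S_T)}+\|\partial_3u\|_{L^1(S_T)}\bigr)$ and the conclusion that $h_{1,T}^{-1}\|\tilde u\|_{L^1(S_T)}$ becomes $Ch_{3,T}|u|_{W^{2,1}(S_T)}$, produce the term $h_{3,T}\|\partial_{11}u\|_{L^1(S_T)}$, which exceeds what the lemma allows by the unbounded factor $h_{3,T}/h_{1,T}$. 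The culprit is your use of the isotropic Poincar\'e inequality with diameter $\sim h_{3,T}$ for $\partial_iu-\overline{\partial_iu}$, $i=1,2$. The repair is easy and makes the argument simpler: use the trivial bound $\|\partial_iu-\overline{\partial_iu}\|_{L^1(S_T)}\le 2\|\partial_iu\|_{L^1(S_T)}$, so that $|\tilde u|_{W^{1,1}(S_T)}\le C|u|_{W^{1,1}(S_T)}$ (the first term of \eqref{eq:stab_aniso_part2}), and feed this into your own slicing inequality to get $\|\tilde u\|_{L^1(S_T)}\le C\bigl(h_{1,T}|u|_{W^{1,1}(S_T)}+h_{1,T}^2|u|_{W^{2,1}(S_T)}\bigr)$, whose $h_{1,T}^{-1}$-multiple is admissible since $h_{1,T}\le Ch_{3,T}$.

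A second, more structural concern is your slicing argument for $\|\tilde u\|_{L^1(S_T)}$: the cross-sections $\{x_3=c\}\cap S_T$ are not congruent, because the top and bottom faces of the prism $S_T$ lie in planes of $\mathcal P_\ell$ that are transversal to the singular edge but not perpendicular to the $x_3$-axis; near those faces the slices are truncated and their shapes vary, so the uniformity of the two-dimensional Poincar\'e and Bramble--Hilbert constants (and of the borderline embedding $W^{2,1}\hookrightarrow C^0$ you invoke for the point value at the corner) needs justification. The paper avoids both delicate points: it adds back the missing nodal contributions $(\Pi_{\sigma_n}u)(n)\phi_n$ for $n\in\Ns\cap\overline T$ so that Lemma \ref{lem:stability_st1} applies to the completed operator, and then estimates each added term by a trace inequality onto a parallelogram $P_{n,T}$ in a lateral face of $S_T$ that contains a whole segment of the singular edge; the vanishing of $u$ is then exploited through a one-dimensional Poincar\'e integration across that parallelogram (codimension one within a fixed two-dimensional set), followed by Lemma \ref{lem:trace_face}, so no borderline embedding and no slice-by-slice uniformity are needed. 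Your linear corrector $w=\overline{\partial_1u}\,x_1+\overline{\partial_2u}\,x_2$ with $D_hw=w$ on $T$ is a nice alternative to the paper's additive completion, but you should either adopt the paper's lateral-face route for $\|\tilde u\|_{L^1(\sigma_{n'})}$ or supply the missing uniformity argument for your slices.
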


\begin{proof}
  For each node $n\in \Ns$ of $T$ we select one short edge
  $\sigma_{n}$ with an endpoint at $n$ and contained in the same
  macroelement as $T$ such that we can apply Lemma
    \ref{lem:stability_st1}. We have for $i=1,2$
  \begin{align} \nonumber 
    \lefteqn{\left\|\partial_i\left(D_hu +
      \sum_{n\in \Ns\cap \overline T} (\Pi_{\sigma_{n}}u)(n)
      \,\phi_{n} \right)\right\|_{L^2(T)}} \\ \label{eq:aux3}
    & \le C |S_T|^{-1/2} \left( |u|_{W^{1,1}(S_T)} + 
      h_{3,T}|\partial_3 u|_{W^{1,1}(S_T)}\right).
  \end{align}
  Now we deal with $\|\partial_i[(\Pi_{\sigma_{n}}u)(n)
  \phi_{n}]\|_{L^2(T)}$ which is first estimated by
  \begin{equation}\label{eq:aux1}
    \|\partial_i[(\Pi_{\sigma_{n}}u)(n) \phi_{n}]\|_{L^2(T)} \le C 
    \|\partial_i\phi_{n}\|_{L^2(T)}|\sigma_{n}|^{-1}\|u\|_{L^1(\sigma_{n})}
  \end{equation}
  for each $n\in \Ns\cap \overline T$.

  Let $n\in \Ns\cap \overline T$ and be $F_{n,T}$ be the face of $S_T$
  having $\sigma_{n}$ as an edge and another edge on the singular
  edge. Let $P_{n,T}$ be the greatest parallelogram contained in
  $F_{n,T}$ and having $\sigma_{n}$ as an edge.  So, $P_{n, T}$ is
  parallel to the $x_3$-axis, and its area is comparable with the area
  of $F_{n,T}$ since opposite edges of the trapezoid $F_{n,T}$ have
  equivalent length. Using a trace inequality we have
  \[
    \|u\|_{L^1(\sigma_{n})} \le C|\sigma_{n}||F_{n,T}|^{-1} 
   (\|u\|_{L^1(P_{n,T})} + h_{3,T}\|\partial_3u\|_{L^1(P_{n,T})}).
  \]
  But, since $u=0$ on the edge of $P_{n,T}$ 
  contained on the singular edge we can use the Poincar\'e inequality
  to obtain
  \begin{equation}\label{ineq:traceedge}
    \|u\|_{L^1(\sigma_{n})} \le C|\sigma_{n}||F_{n,T}|^{-1} 
    (|\sigma_{n}|\|\partial_{\sigma_{n}}u\|_{L^1(P_{n,T})} + 
    h_{3,T}\|\partial_3u\|_{L^1(P_{n,T})}).
  \end{equation}
  From Lemma \ref{lem:trace_face} we have for all $v\in W^{1,1}(S_T)$
  \begin{align} \nonumber
    \lefteqn{\|v\|_{L^1(P_{n,T})}} \\ \label{ineq:traceface}
    & \le C |F_{n,T}||S_T|^{-1} \left(\|v\|_{L^1(S_T)} +
      |s_{1,T}|\|\partial_{s_{1,T}}v\|_{L^1(S_T)} +
      |s_{2,T}|\|\partial_{s_{2,T}}v\|_{L^1(S_T)}\right).
  \end{align}
  Using twice \eqref{ineq:traceface} we obtain from
  \eqref{ineq:traceedge}
  \begin{align}\nonumber
    \lefteqn{\|u\|_{L^1(\sigma_{n})}} \\ \nonumber &\le C
    |\sigma_{n}|^2|S_T|^{-1}
    \left(\|\partial_{\sigma_{n}}u\|_{L^1(S_T)} +
      |s_{1,T}|\|\partial_{s_{1,T}\sigma_{n}}u\|_{L^1(S_T)} +
      |s_{2,T}|\|\partial_{s_{2,T}\sigma_{n}}u\|_{L^1(S_T)}\right) +
    \\\label{eq:aux2} &\quad +C |\sigma_{n}||S_T|^{-1} h_{3,T}
    \left(\|\partial_3u\|_{L^1(S_T)} +
      |s_{1,T}|\|\partial_{s_{1,T}3}u\|_{L^1(S_T)} +
      |s_{2,T}|\|\partial_{s_{2,T}3}u\|_{L^1(S_T)}\right).
  \end{align}
  With the estimates
  \begin{eqnarray*}
    \|\partial_{\sigma_{n}}u\|_{L^1(S_T)} &\le &
    |u|_{W^{1,1}(S_T)},\\
    \|\partial_{s_{i,T}\sigma_{n}}u\|_{L^1(S_T)} &\le&
    |u|_{W^{2,1}(S_T)},\qquad i=1,2,\\
    \|\partial_{s_{i,T}3}u\|_{L^1(S_T)}&\le&
    |\partial_3u|_{W^{1,1}(S_T)},\qquad i=1,2,
  \end{eqnarray*}
  the inequality
  \[
  \|\partial_{i}\phi_{n}\|_{L^2(T)} \le C h_{i,T}^{-1}|T|^{1/2},
  \]
  and $|\sigma_{n}|\sim h_{i,T}$ ($i=1,2$) we obtain from
  \eqref{eq:aux1}
  \begin{align}\nonumber
    \|\partial_i[(\Pi_{\sigma_{n}}u)(n) 
      \phi_{n}]\|_{L^2(T)}  &\le C
    |S_T|^{-1/2}\left(|u|_{W^{1,1}(S_T)} +
    (h_{1,T}+h_{2,T})|u|_{W^{2,1}(S_T)}\right)  \\ \label{eq:aux4}
    &\quad + C |S_T|^{-1/2} 
    \frac{h_{3,T}}{h_{i,T}} \|\partial_3u\|_{L^1(S_T)}.
  \end{align}
  Finally, taking into account that, since $h_{1,T}= h_{2,T}\le
  Ch_{3,T}$, we have
  \[
  (h_{1,T}+h_{2,T})|u|_{W^{2,1}(S_T)} +
  h_{3,T}\|\partial_{3}u\|_{W^{1,1}(S_T)} \le C\sum_{|\alpha|=1}
  h_T^\alpha |D^\alpha u|_{W^{1,1}(S_T)},
  \]
  inequality \eqref{eq:stab_aniso_part2} follows from \eqref{eq:aux3}
  and \eqref{eq:aux4}.
\end{proof}

We are now prepared to estimate the interpolation error for needle elements.

\begin{lemma}[anisotropic needle element, full regularity]
  \label{lem:aniso_needle_regular}
  If $T$ is an anisotropic element with $h_{1,T}=h_{2,T}\le Ch_{3,T}$
  then the local interpolation error estimates
  \begin{align}
    \label{eq:interrest_anisoH2} |u-D_hu|_{H^1(T)} &\le
    C \sum_{|\alpha|=1} h_T^\alpha |D^\alpha u|_{H^1(S_T)}
  \end{align}
  hold provided that $u\in H^2(S_T)$.
\end{lemma}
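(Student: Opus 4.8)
The plan is to estimate the three directional derivatives $\|\partial_i(u-D_hu)\|_{L^2(T)}$, $i=1,2,3$, separately and then recombine, mirroring the split used for flat elements in Lemma~\ref{lem:aniso_flat_regular} but now also invoking Lemma~\ref{lem:stability_st1} for the two directions perpendicular to the singular edge. Since $T$ lies in a macroelement of type 3 or 4 and (case 6) carries no node of $\mathcal N_{\mathrm c}\cup\mathcal N_{\mathrm s}$, the construction of $D_h$ together with the fact that each $\Pi_{\sigma_\node}$ reproduces affine functions on $\sigma_\node$ shows that $D_h$ reproduces $\mathcal P_1$ on $T$; hence $u-D_hu=(u-w)-D_h(u-w)$ for every $w\in\mathcal P_1$. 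I would fix once and for all the affine function $w=b_1x_1+b_2x_2+b_3x_3$ whose constant partial derivatives are the averages $b_j=|S_T|^{-1}\int_{S_T}\partial_j u$, so that $\int_{S_T}\partial_j(u-w)=0$ for $j=1,2,3$.

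For the $\partial_3$-component I would argue exactly as in Lemma~\ref{lem:aniso_flat_regular}: apply Lemma~\ref{lem:stability_x3} to $u-w$, pass from the $L^1(S_T)$-norms on the right to $L^2(S_T)$-norms by Cauchy--Schwarz (absorbing $|S_T|^{-1/2}|S_T|^{1/2}$), and then, using $\int_{S_T}\partial_3(u-w)=0$, eliminate the zeroth-order term $\|\partial_3(u-w)\|_{L^2(S_T)}$ by an anisotropic Poincar\'e inequality on the prism $S_T$. This gives $\|\partial_3(u-D_hu)\|_{L^2(T)}\le C\sum_{|\alpha|=1}h_T^\alpha\|D^\alpha\partial_3 u\|_{L^2(S_T)}$, and each term here is one of the terms $h_T^\beta|D^\beta u|_{H^1(S_T)}$ appearing on the right-hand side of \eqref{eq:interrest_anisoH2}.

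For $\partial_i$ with $i=1,2$, I would apply Lemma~\ref{lem:stability_st1} to $u-w$, obtaining $\|\partial_i D_h(u-w)\|_{L^2(T)}\le C\bigl(|u-w|_{H^1(S_T)}+h_{3,T}|\partial_3(u-w)|_{H^1(S_T)}\bigr)$. Because $w$ is affine, $|\partial_3(u-w)|_{H^1(S_T)}=|\partial_3 u|_{H^1(S_T)}$, which is already the $\beta=(0,0,1)$ term; and the mean-zero choice of $w$ lets me bound $|u-w|_{H^1(S_T)}$ by applying the same anisotropic Poincar\'e inequality to each $\partial_j(u-w)$ and using that second derivatives of $w$ vanish, giving $|u-w|_{H^1(S_T)}\le C\sum_{|\beta|=1}h_T^\beta|D^\beta u|_{H^1(S_T)}$. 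The leftover term $\|\partial_i(u-w)\|_{L^2(T)}\le\|\partial_i(u-w)\|_{L^2(S_T)}$ is controlled by the very same inequality. Adding the squares of the three directional estimates yields \eqref{eq:interrest_anisoH2}.

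The main obstacle is the anisotropic Poincar\'e (Deny--Lions) inequality on the prismatic patch: one needs $\|v\|_{L^2(S_T)}\le C\sum_{k=1}^3 h_{k,T}\|\partial_k v\|_{L^2(S_T)}$ whenever $\int_{S_T}v=0$, with $C$ independent of the aspect ratio $h_{3,T}/h_{1,T}$ and depending only on the shape parameters of $S_T$ (isotropic top/bottom faces of size $\sim h_{1,T}$, lateral edges parallel to the singular edge of length $\sim h_{3,T}$). This is the anisotropic Bramble--Hilbert-type estimate of \cite{apel:92}; the point is to check that it applies to $S_T$, which is a prism with \emph{non-parallel} top and bottom faces rather than a box, and this is exactly what the careful construction of $S_T$ in Section~\ref{sec:regularity} was designed to guarantee. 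A minor secondary point: should a needle element nonetheless carry a coupling node, one would subtract the single correction $(\Pi_{\sigma_\node}u)(\node)\phi_\node$ and estimate it through anisotropic Lagrange-interpolation bounds on $T$ precisely as in Lemmas~\ref{lem:iso_regular} and \ref{lem:aniso_flat_regular}.
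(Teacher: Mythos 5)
Your proposal is correct and takes essentially the same route as the paper: reduce to $u-w$ for an affine $w$ using $\mathcal P_1$-reproduction, invoke Lemmas~\ref{lem:stability_x3} and \ref{lem:stability_st1} for the stability of $\partial_3 D_h$ and $\partial_i D_h$ respectively, and close with an anisotropic Deny--Lions/Poincar\'e argument on $S_T$ (the paper cites Lemma~1 of \cite{apel:97b} for exactly the aspect-ratio-uniform estimate you single out as the main obstacle). Your explicit choice of $w$ via averaged gradients is just an unpacking of that cited lemma, and your fallback for coupling nodes is unnecessary since, as the paper notes, needle elements in this case carry none.
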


\begin{remark}
  The estimate \eqref{eq:interrest_anisoH2} does not hold for the
  Lagrange interpolant, see \cite{apel:92}.
\end{remark}

\begin{proof} \textbf{(Lemma \ref{lem:aniso_needle_regular})}
  Since the needle elements with full regularity do not contain a
  coupling node we can apply both Lemmas \ref{lem:stability_x3} and
  \ref{lem:stability_st1}. That means we have shown that
  \begin{align*}
    |D_hu|_{H^1(T)} &\le C \sum_{|\alpha|\le1} 
     h_T^\alpha \|D^\alpha\partial_3u\|_{L^2(S_T)} +
    C \left( |u|_{H^1(S_T)} + h_{3,T}|\partial_3 u|_{H^1(S_T)} \right) \\ 
    &\le C\sum_{|\alpha|\le1} h_T^\alpha\, |D^\alpha u|_{H^1(S_T)}.
  \end{align*}

  We exploit now that $D_hw=w$ for all $w\in\mathcal{P}_1$.
  Consequently, we get
  \begin{align*}
    |u-D_hu|_{H^1(T)} &= |(u-w)-D_h(u-w)|_{H^1(T)}
    \quad\forall w\in\mathcal{P}_1 \\
    &\le |u-w|_{H^1(T)} + |D_h(u-w)|_{H^1(T)} \\
    &\le C\sum_{|\alpha|\le1} h_T^\alpha\, |D^\alpha(u-w)|_{H^1(S_T)}.
  \end{align*}
  We use now again a Deny--Lions type argument where the form of
  Lemma~1 in \cite{apel:97b} best suits our needs, and conclude the
  desired estimate.
\end{proof}

\begin{lemma}[anisotropic needle element, reduced regularity]
  \label{lem:aniso_weighted}
  Let $T$ be an an\-iso\-tropic element with $h_{1,T}= h_{2,T} \le C
  h_{3,T}$ and let $S_T$ have zero distance to the singular edge. Then
  the local interpolation error estimate
  \begin{align}
    \label{eq:interrest_aniso_edge} |u-D_hu|_{H^1(T)} &\le
    Ch_{1,T}^{1-\delta} \sum_{i=1}^2
    \|\partial_i u\|_{V^{1,2}_{\delta,\delta}(S_T)}
    + Ch_{3,T} \|\partial_3 u\|_{V^{1,2}_{0,0}(S_T)}
  \end{align}
  holds provided that $u$ has the regularity demanded by the right-hand
  sides of the estimates and $\delta\in[0,1)$.  If $T$ is an element
  with $h_{1,T}= h_{2,T} \le C h_{3,T}$ and $S_T$ has zero
  distance to both a singular vertex and a singular edge then the
  local interpolation error estimate
  \begin{align}
    \label{eq:interrest_aniso_edgecorner} |u-D_hu|_{H^1(T)} &\le
    Ch_{1,T}^{1-\beta-\delta}h_{3,T}^\delta
    \sum_{i=1}^2 \|\partial_i u\|_{V^{1,2}_{\beta,\delta}(S_T)}
    + Ch_{1,T}^{-\beta}h_{3,T} \|\partial_3 u\|_{V^{1,2}_{\beta,0}(S_T)}
  \end{align}
  hold provided that $u$ has the regularity demanded by the right-hand
  sides of the estimates and $\beta,\delta\in[0,1)$, $\beta+\delta<1$.
\end{lemma}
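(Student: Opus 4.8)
The plan is to mimic the structure of the proof of Lemma~\ref{lem:iso_weighted}, replacing the isotropic stability estimate by the anisotropic ones. Since $T$ is a needle element touching the singular edge (this is case~7 in the list above), it contains a node $\node\in\Ns$, so Lemma~\ref{lem:stability_st2} is available for $\partial_iD_hu$, $i=1,2$, while Lemma~\ref{lem:stability_x3} applies to $\partial_3D_hu$. The hypotheses $\partial_3u\in W^{1,1}(S_T)$ and $u\in W^{2,1}(S_T)$ of these lemmas follow from the weighted regularity assumed here together with the H\"older estimates below (recall also that $u$ vanishes on the singular edge). As in Lemma~\ref{lem:iso_weighted}, no polynomial has to be subtracted: I would simply use the triangle inequality
\[
  \|\partial_j(u-D_hu)\|_{L^2(T)}\le\|\partial_ju\|_{L^2(T)}+\|\partial_jD_hu\|_{L^2(T)},\qquad j=1,2,3,
\]
since the singular weights already supply the required powers of $h_{1,T}$ and $h_{3,T}$ in the low-order terms.

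The technical core is then to convert every $L^1(S_T)$-norm on the right-hand sides of Lemmas~\ref{lem:stability_x3} and~\ref{lem:stability_st2}, as well as each term $\|\partial_ju\|_{L^2(T)}\le\|\partial_ju\|_{L^2(S_T)}$, into the weighted norms appearing in~\eqref{eq:interrest_aniso_edge} and~\eqref{eq:interrest_aniso_edgecorner}. For this I would apply H\"older's inequality with the weight functions, using that on the prism $S_T$ one has $r\le Ch_{1,T}$, $R\ge r$, and, in the edge--vertex case, $R\le Ch_{3,T}$, together with the one-dimensional computation $\|r^{-s}\|_{L^2(S_T)}\le C|S_T|^{1/2}h_{1,T}^{-s}$ valid for $s<1$ (the cross section of $S_T$ is a two-dimensional region of diameter $h_{1,T}$ meeting the $x_3$-axis, and $|S_T|\sim h_{1,T}^2h_{3,T}$). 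For~\eqref{eq:interrest_aniso_edge} all relevant weights collapse to pure powers of $r$, because $R^{\delta-1}\theta^{\delta-1}=r^{\delta-1}$, $R^{\delta}\theta^{\delta}=r^{\delta}$ and $R^{-1}\theta^{-1}=r^{-1}$; one gets, for instance, $\|\partial_3u\|_{L^1(S_T)}\le\|r\|_{L^2(S_T)}\|r^{-1}\partial_3u\|_{L^2(S_T)}\le C|S_T|^{1/2}h_{1,T}\|\partial_3u\|_{V^{1,2}_{0,0}(S_T)}$ and $\|\partial_iu\|_{L^2(S_T)}\le Ch_{1,T}^{1-\delta}\|\partial_iu\|_{V^{1,2}_{\delta,\delta}(S_T)}$, and the second-order terms of Lemma~\ref{lem:stability_st2} are handled analogously with the weight $r^{\delta}$ for $\partial_k\partial_iu$, $i,k\in\{1,2\}$, and the plain $L^2$-control of $\nabla\partial_3u$ otherwise; collecting the prefactors $|S_T|^{-1/2}$, $h_T^\alpha$ and $h_{3,T}/h_{i,T}$ and using $h_{1,T}=h_{2,T}\le Ch_{3,T}$ then gives~\eqref{eq:interrest_aniso_edge}. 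For~\eqref{eq:interrest_aniso_edgecorner} the same scheme is run with the mixed weights $R^{\beta}\theta^{\delta}=R^{\beta-\delta}r^{\delta}$, $R^{\beta-1}\theta^{\delta-1}=R^{\beta-\delta}r^{\delta-1}$ and $R^{\beta-1}\theta^{-1}=R^{\beta}r^{-1}$.

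I expect the main obstacle to be matching the exponents of $h_{1,T}$ and $h_{3,T}$ exactly in the edge--vertex estimate. When $\delta\ge\beta$ the factor $R^{\delta-\beta}$ is bounded by $Ch_{3,T}^{\delta-\beta}$ on $S_T$ and the bookkeeping is routine; when $\delta<\beta$ the factor $R^{-(\beta-\delta)}$ is unbounded at the vertex, and one must use $R\ge r$ to replace it by $r^{-(\beta-\delta)}$, so that e.g.\ $R^{\delta-\beta}r^{1-\delta}\le r^{1-\beta}\le Ch_{1,T}^{1-\beta}$ and $R^{\delta-\beta}r^{-\delta}\le r^{-\beta}$; afterwards the elementary inequalities $h_{1,T}^{1-\beta}\le h_{1,T}^{1-\beta-\delta}h_{3,T}^{\delta}$ and $h_{1,T}^{1-\beta}\le h_{1,T}^{-\beta}h_{3,T}$, both consequences of $h_{1,T}\le h_{3,T}$, close the argument. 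The hypotheses $\beta,\delta\in[0,1)$ and $\beta+\delta<1$ enter precisely here: $1-\beta>0$ and $1-\delta>0$ make the positive weight powers boundable by powers of $h_{1,T}$, while $\delta<1$, $\beta<1$ and $\beta+\delta<1$ are what render the negative weight powers ($r^{-2\delta}$, $r^{-2\beta}$, $r^{-2(\beta+\delta)}$) square-integrable over the cross section of $S_T$ with the expected $h_{1,T}$-scaling.
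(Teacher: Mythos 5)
Your proposal is correct and follows essentially the same route as the paper's proof: triangle inequality without polynomial subtraction, the stability Lemmas \ref{lem:stability_x3} and \ref{lem:stability_st2}, and then H\"older estimates converting each $L^1$- or $L^2$-term into the weighted norms using $r\le Ch_{1,T}$, $R\le Ch_{3,T}$, $R\ge r$ and $\|r^{-s}\|_{L^2(S_T)}\le C|S_T|^{1/2}h_{1,T}^{-s}$ for $s<1$. The only cosmetic difference is that for the mixed weight $R^{\delta-\beta}r^{-\delta}$ you split into the cases $\delta\ge\beta$ and $\delta<\beta$, whereas the paper bounds it uniformly via $R^{-\beta}\le r^{-\beta}$ and $R^{\delta}\le h_{3,T}^{\delta}$, which is where $\beta+\delta<1$ is used; both variants are valid.
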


\begin{proof}
  As in the proof of Lemma \ref{lem:aniso_needle_regular} we
  distinguish between the derivatives $\partial_3 D_hu$ and the derivatives
  along directions perpendicular to the $x_3$-axis.  From Lemma
  \ref{lem:stability_x3} we obtain by using the triangle inequality
  and $|S_T|^{-1/2}\|\partial_3u\|_{L^1(S_T)}\le
    \|\partial_3u\|_{L^2(T)}$
  \begin{align*}
     \|\partial_3 (u-D_hu)\|_{L^2(T)}  &\le
     \|\partial_3u\|_{L^2(T)} + C|S_T|^{-1/2}
    \sum_{|\alpha|\le1}h^\alpha \|D^\alpha \partial_3u\|_{L^1(S_T)}
     \\ &\le  C  \|\partial_3u\|_{L^2(S_T)} + C|S_T|^{-1/2}
    \sum_{|\alpha|=1}h^\alpha \|D^\alpha \partial_3u\|_{L^1(S_T)}.
  \end{align*}
  For the estimate of $\partial_iD_hu$, $i=1,2$, we use Lemma
  \ref{lem:stability_st2}, from which we conclude that
  \begin{align*}
    \lefteqn{\|\partial_{i} (u-D_hu)\|_{L^2(T)}} \\ &\le C  |u|_{H^1(S_T)} +
  C|S_T|^{-1/2} \left(\frac{h_{3,T}}{h_{i,T}}\|\partial_3 u\|_{L^1(S_T)} + 
  \sum_{|\alpha|=1} h_T^\alpha|D^\alpha u|_{W^{1,1}(S_T)}\right).
  \end{align*}
  These two estimates can be summarized by using $h_{1,T}\le Ch_{3,T}$
  to
  \begin{align}\nonumber
    \lefteqn{|u-D_hu|_{H^1(T)}} \\ \label{eq:stab2}  &\le C
    |u|_{H^1(S_T)} +
    C |S_T|^{-1/2} \left(\frac{h_{3,T}}{h_{1,T}}\|\partial_3 u\|_{L^1(S_T)} +
    \sum_{|\alpha|=1} h_T^\alpha|D^\alpha u|_{W^{1,1}(S_T)}\right).
  \end{align}
  It remains to estimate the terms against the weighted norms.
  Firstly, we have
  \begin{align*}
    |u|_{H^1(S_T)} &\le
     \sum_{i=1}^2 \|R^{1-\beta}\theta^{1-\delta}\cdot
     R^{\beta-1}\theta^{\delta-1}\partial_i u\|_{L^2(T)} +
     \|R^{1-\beta}\theta\cdot
     R^{\beta-1}\theta^{-1}\partial_3 u\|_{L^2(T)} \\ &\le
    \sum_{i=1}^2     \max_{S_T} R^{1-\beta}\theta^{1-\delta}\,
    \|\partial_i u\|_{V^{1,2}_{\beta,\delta}(S_T)} +
    \max_{S_T} R^{1-\beta}\,
    \|\partial_3 u\|_{V^{1,2}_{\beta,0}(S_T)}.
  \end{align*}
  With $R^{1-\beta}\theta^{1-\delta}= r^{1-\delta}
  R^{-\beta}R^\delta\le r^{1-\beta-\delta} R^\delta \le C
  h_{1,T}^{1-\beta-\delta} h_{3,T}^\delta$ (where we used the
  assumption $\beta+\delta\le1$) and $R^{1-\beta}\le h_{3,T}^{1-\beta}
  \le h_{1,T}^{-\beta} h_{3,T}$ we derive
  \begin{align*}
    |u|_{H^1(S_T)} &\le  Ch_{1,T}^{1-\beta-\delta} h_{3,T}^{\delta}
    \sum_{i=1}^2 \|\partial_i u\|_{V^{1,2}_{\beta,\delta}(S_T)}
    + Ch_{1,T}^{-\beta} h_{3,T} \|\partial_3 u\|_{V^{1,2}_{\beta,0}(S_T)}.
  \end{align*}
  With $R^{1-\delta}\theta^{1-\delta}= r^{1-\delta} \le r^{1-\delta}
  \le C h_{1,T}^{1-\delta}$ (using that the exponent is positive) we
  derive also
  \begin{align*}
    |u|_{H^1(S_T)} &\le  Ch_{1,T}^{1-\delta}
    \sum_{i=1}^2 \|\partial_i u\|_{V^{1,2}_{\delta,\delta}(S_T)}
    + C h_{3,T} \|\partial_3 u\|_{V^{1,2}_{0,0}(S_T)}.
  \end{align*}

  Secondly, for $T$ intersecting the singular edge, but no singular
  vertices, we have
  \begin{align*}
    \frac{h_{3,T}}{h_{1,T}} \|\partial_3u\|_{L^1(S_T)}&\le
    \frac{h_{3,T}}{h_{1,T}}\|\partial_3u\|_{V^{1,2}_{0,0}(S_T)}
    \|r\|_{L^2(S_T)} 
    \le h_{3,T}|S_{T}|^{1/2} \|\partial_3u\|_{V^{1,2}_{0,0}(S_T)}.
  \end{align*}
  If $T$ has also a singular vertex, then we have with
  $R^{\beta-1}\theta^{-1}=R^\beta r^{-1}$
  \begin{align*}
    \frac{h_{3,T}}{h_{1,T}} \|\partial_3u\|_{L^1(S_T)}&\le
    \frac{h_{3,T}}{h_{1,T}} \|\partial_3u\|_{V^{1,2}_{\beta,0}(S_T)}
    \|R^{-\beta}r\|_{L^2(S_T)} 
    \le h_{3,T}h_{1,T}^{-\beta}|S_T|^{1/2}
    \|\partial_3u\|_{V^{1,2}_{\beta,0}(S_T)}
  \end{align*}
  where we used that 
  \begin{equation}\label{eq:aux5}
    \|R^{-\beta}r\|_{L^2(S_T)} \le
    \|r^{1-\beta}\|_{L^2(S_T)} \le Ch_{1,T}^{1-\beta}|S_T|^{1/2}
  \end{equation}
  which can be obtained by integration.
  The second derivatives in estimate
  \eqref{eq:stab2} are treated in a similar way.  For $i=1,2,3$ we get
  \begin{align*}
     \|\partial_{i3} u\|_{L^1(S_T)}  & \le
    \|R^{-\beta}\|_{L^2(S_T)} \|R^\beta\partial_{i3} u\|_{L^2(S_T)}  \le
    h_{1,T}^{-\beta}|S_T|^{1/2} \|\partial_3
    u\|_{V^{1,2}_{\beta,0}(S_T)}.
  \end{align*}
  For $i,j=1,2$ and supposing that $T$ does not have singular vertices we
  have
  \begin{eqnarray*}
    h_{i,T}\|\partial_{ij}u\|_{L^1(S_T)}&\le&
    h_{1,T}\|R^{-\delta}\theta^{-\delta}\|_{L^2(S_T)}
    \|R^\delta\theta^\delta\partial_{ij}u\|_{L^2(S_T)}\\ &\le&
    h_{1,T}^{1-\delta}|S_T|^{1/2}
    |\partial_iu|_{V^{1,2}_{\delta,\delta}(S_T)},
  \end{eqnarray*}
  where we used again an argument as in \eqref{eq:aux5}. If $T$ has a
  singular vertex, then
  \begin{eqnarray*}
    h_{i,T}\|\partial_{ij}u\|_{L^1(S_T)}&\le&
    h_{1,T}\|R^{-\beta}\theta^{-\delta}\|_{L^2(S_T)}
    \|R^\beta\theta^\delta\partial_{ij}u\|_{L^2(S_T)}.
  \end{eqnarray*}
  But, $R^{-\beta}\theta^{-\delta}=R^{-\beta+\delta}r^{-\delta}\le
  R^\delta r^{-\beta-\delta}\le h_{3,T}^\delta r^{-\beta-\delta}$, and
  so, since $\beta+\delta<1$, a similar argument as in \eqref{eq:aux5}
  give us
  \[
  \|R^{-\beta}\theta^{-\delta}\|_{L^2(S_T)}\le
  h_{3,T}^\delta \|r^{-\beta-\delta}\|_{L^2(S_T)}\le
  Ch_{1,T}^{-\beta-\delta} h_{3,T}^\delta |S_T|^{1/2}.
  \]
  Hence we have
  \[
  h_{i,T}\|\partial_{ij}u\|_{L^1(S_T)}\le Ch_{1,T}^{1-\beta-\delta}
  h_{3,T}^\delta |S_T|^{1/2}
  \|\partial_iu\|_{V^{1,2}_{\beta,\delta}(S_T)}.
  \]
  Therefore, the desired estimates are proved.
\end{proof}

\begin{theorem}[\textbf{global interpolation error estimate}] \label{lem:globinterrest}
  Let $u$ be the solution of the boundary value problem
  \eqref{eq:poissonproblem} with $f\in L^2(\Omega)$, and let $u_I,u_R$
  be the functions obtained from the splitting \eqref{eq:splitting}.
  Assume that the refinement parameters $\mu_\ell$ and $\nu_\ell$
  satisfy the conditions
  \begin{align}
    \label{cond:mu} \mu_\ell &< \lambda_{\mathrm{e}}^{(\ell)}, \\
    \label{cond:nu} \nu_\ell &< \lambda_{\mathrm{v}}^{(\ell)} + \frac12, \\
    \label{cond:munu} \frac1{\nu_\ell} + \frac1{\mu_\ell}
    \left(\lambda_{\mathrm{v}}^{(\ell)} - \frac12\right) &> 1,
  \end{align}
  $\ell=1,\ldots,L$.  Then the global interpolation error estimate
  \begin{equation}\label{eq:globinterrest}
    |u_R-D_hu_R|_{H^1(\Lambda_\ell)} \le Ch\|f\|_{L^2(\Lambda_\ell)}
  \end{equation}
  is satisfied.
\end{theorem}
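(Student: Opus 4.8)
The plan is to prove \eqref{eq:globinterrest} one macroelement at a time and, on each $\Lambda_\ell$, to split the contribution elementwise,
\[
  |u_R-D_hu_R|_{H^1(\Lambda_\ell)}^2
   = \sum_{T\subset\Lambda_\ell} |u_R-D_hu_R|_{H^1(T)}^2 ,
\]
bounding each summand by the local interpolation error estimate of the Lemma that matches the geometric type of $T$ (isotropic, anisotropic flat, or anisotropic needle; with or without a coupling node; full or reduced regularity). Where $u_R$ is locally $H^2$ I would apply the full-regularity Lemmas directly to $u_R|_{S_T}$; near a singular edge or vertex I would insert the decomposition $u_R=u_{\mathrm{r}}+u_{\mathrm{s}}$ of Theorem~\ref{thm:regularity} — which holds on all of $\Lambda_\ell$, since $u_R$ inherits the regularity of $u$ (the Lagrange interpolant $u_I$ being affine on each $\Lambda_\ell$) — and estimate $u_{\mathrm{r}}$ by the full-regularity Lemmas and $u_{\mathrm{s}}$ by the weighted ones. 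After substituting the mesh--grading relations of \cite{apel:97c} between the element sizes $h_T$ (resp.\ $h_{1,T}$, $h_{3,T}$), the distance $R_T$ to the singular vertex and the distance $r_T$ to the singular edge, the resulting sums over $T$ are to be bounded by $Ch^2$ times the squared weighted Sobolev norms of $u_{\mathrm{r}}$ and $u_{\mathrm{s}}$ occurring in Theorem~\ref{thm:regularity}; a final appeal to that theorem together with the a priori bound $\|u_{\mathrm{r}}\|_{H^2(\Lambda_\ell)}+\sum_i\|\partial_i u_{\mathrm{s}}\|_{V^{1,2}_{\beta,\delta}(\Lambda_\ell)}\le C\|f\|_{L^2(\Lambda_\ell)}$ (valid for all admissible $\beta,\delta$) would then give \eqref{eq:globinterrest}.

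Concretely I would go through the four macroelement types. On a type-1 macroelement the mesh is quasi-uniform, $u_R\in H^2$ there, and Lemma~\ref{lem:iso_regular} (whose coupling-node hypothesis holds because $u_R$ vanishes at coupling nodes) together with the finite overlap of the patches $S_T$ gives $\sum_T h_T^2|u_R|_{H^2(S_T)}^2\le Ch^2|u_R|_{H^2(\Lambda_\ell)}^2$. On a type-2 macroelement I would use $h_T\sim h\,R_T^{1-\nu_\ell}$ away from the vertex (and $h_T\sim h^{1/\nu_\ell}$ in the innermost layer), treat the $u_{\mathrm{r}}$-terms as in type 1, and estimate the $u_{\mathrm{s}}$-terms by Lemma~\ref{lem:iso_weighted} with an exponent $\beta\in\bigl(\max\{0,\tfrac12-\lambda_{\mathrm{v}}^{(\ell)}\},1\bigr)$; inserting the grading law, the sum over the mesh layers is a geometric series that converges precisely when $\beta$ can be chosen to balance the layer exponents, which upon optimizing reduces to \eqref{cond:nu}. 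On a type-3 macroelement I would use $h_{1,T}=h_{2,T}\sim h\,r_T^{1-\mu_\ell}$ (resp.\ $\sim h^{1/\mu_\ell}$ at the edge) and $h_{3,T}\sim h$: flat elements are handled by Lemma~\ref{lem:aniso_flat_regular} (which already covers coupling nodes), needle elements whose patch does not meet the singular edge by Lemma~\ref{lem:aniso_needle_regular} applied to $u_R|_{S_T}\in H^2(S_T)$, and needle elements whose patch meets the singular edge by estimate \eqref{eq:interrest_aniso_edge} of Lemma~\ref{lem:aniso_weighted} for $u_{\mathrm{s}}$ (with $\delta\in\bigl(\max\{0,1-\lambda_{\mathrm{e}}^{(\ell)}\},1\bigr)$) together with Lemma~\ref{lem:aniso_needle_regular} for $u_{\mathrm{r}}$; the layer sum converges exactly when $\delta$ may be taken $\le 1-\mu_\ell$, i.e.\ under \eqref{cond:mu}. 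Finally, on a type-4 macroelement the flat elements still carry full regularity, the needle elements away from the edge are treated by Lemma~\ref{lem:aniso_needle_regular}, and the needle elements touching the singular edge near the singular vertex require estimate \eqref{eq:interrest_aniso_edgecorner} of Lemma~\ref{lem:aniso_weighted} with $\beta,\delta\in[0,1)$, $\beta+\delta<1$, $\beta>\tfrac12-\lambda_{\mathrm{v}}^{(\ell)}$, $\delta>1-\lambda_{\mathrm{e}}^{(\ell)}$; here the elements form a two-parameter family of mesh layers (graded towards the vertex along the edge and towards the edge transversally) and the convergence of the associated double series is exactly what the conditions \eqref{cond:mu}, \eqref{cond:nu} and \eqref{cond:munu} encode.

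Two auxiliary facts enter throughout. First, by the ``as small as possible'' construction of the patches $S_T$ in Section~\ref{sec:regularity} — in particular the disjointness of the patches of elements on opposite sides of a coupling face — each point of $\Lambda_\ell$ lies in only boundedly many $S_T$, so $\sum_T\|v\|_{L^2(S_T)}^2\le C\|v\|_{L^2(\Lambda_\ell)}^2$ and likewise for the weighted norms. Second, the coupling-node hypotheses of Lemmas~\ref{lem:iso_regular} and~\ref{lem:aniso_flat_regular} are satisfied for $u_R$, which vanishes at the coupling nodes and on the singular edges by construction (cf.\ the discussion after \eqref{eq:splitting}); where I split $u_R=u_{\mathrm{r}}+u_{\mathrm{s}}$ I would moreover arrange the decomposition so that $u_{\mathrm{s}}$ also vanishes at the coupling nodes appearing in the patch, so that the regular part $u_{\mathrm{r}}$ keeps this property.

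The hard part is the layer-wise summation on the type-4 macroelements, where the transversal edge grading and the longitudinal vertex grading interact: substituting the grading relations into estimate \eqref{eq:interrest_aniso_edgecorner} leaves a double series whose summability is sharp and forces precisely the mixed condition \eqref{cond:munu}, while \eqref{cond:mu} and \eqref{cond:nu} come from the pure edge and pure vertex layers respectively. The remaining steps — making the grading relations $h_{i,T}\sim\ldots$ and the finite-overlap of the $S_T$ explicit, and converting the non-weighted ($u_{\mathrm{r}}$-)sums, which only uses $h_T\le Ch$ because all grading parameters are $\le 1$ — are routine given the construction of Section~\ref{sec:regularity} and the meshes of \cite{apel:97c}.
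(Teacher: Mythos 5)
Your outline is essentially the paper's own argument: the paper disposes of this theorem in one line by invoking the proof of Theorem~5.1 of \cite{apel:97c} with $p=2$ together with the finite overlap of the patches $S_T$, and what you write out — local estimates per element type, the splitting $u_R=u_{\mathrm{r}}+u_{\mathrm{s}}$ from Theorem~\ref{thm:regularity}, substitution of the grading relations, and the layer sums whose convergence encodes \eqref{cond:mu}--\eqref{cond:munu} — is a faithful expansion of exactly that argument. No gap; your version is simply more explicit than the paper's citation-style proof.
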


\pagebreak[3]

\begin{proof}
  The proof can be carried out following the lines of the proof of
  Theorem 5.1 in \cite{apel:97c} with the setting $p=2$. Note that
  only a finite number (independent of $h$) of the $S_T$ overlap at
  any point.
\end{proof}

\begin{remark}
  The refinement conditions \eqref{cond:mu}--\eqref{cond:munu} were
  discussed in \cite{apel:97c} already: The conditions \eqref{cond:mu}
  and \eqref{cond:nu} balance the edge and vertex singularities. The
  third condition, \eqref{cond:munu}, follows from \eqref{cond:nu} in
  the case $\mu_\ell=\nu_\ell$; only in the case $\mu_\ell<\nu_\ell$
  it imposes a condition between $\mu_\ell$ and $\nu_\ell$ limiting
  the anisotropy of the mesh. For the Fichera example treated in
  Section \ref{sec:tests} we have
  $\lambda_{\mathrm{v}}^{(\ell)}\approx0.454$ and
  $\lambda_{\mathrm{e}}^{(\ell)}=\frac23$. With the choice
  $\nu_\ell=0.9$ the conditions \eqref{cond:mu} and \eqref{cond:munu}
  imply the choice $0.414<\mu_\ell<\frac23$. For $\nu_\ell=0.8$ we
  would get the weaker condition $0.184<\mu_\ell<\frac23$.

  Note also that in the absence of singularities we have set
  $\lambda_{\mathrm{e}}^{(\ell)}=\infty$ and/or
  $\lambda_{\mathrm{v}}^{(\ell)}=\infty$. In these cases we can set
  $\mu_\ell=1$ and/or $\nu_\ell=1$.
\end{remark}

\begin{corollary}[\textbf{$H^1$ and $L^2$ finite element error estimate}]
  \label{cor:fe_errest}
  Let $u$ be the solution of the boundary value problem
  \eqref{eq:poissonproblem}, and let $u_h$ be the corresponding finite
  element solution on a finite element mesh as constructed in Section
  \ref{sec:regularity} with grading parameters satisfying the conditions
  \eqref{cond:mu}--\eqref{cond:munu}. Then the discretization error
  can be estimated by
  \begin{align}
    \label{est:globH1} \|u-u_h\|_{H^1(\Omega)} &\le Ch \|f\|_{L^2(\Omega)}, \\
    \label{est:globL2} \|u-u_h\|_{L^2(\Omega)} &\le Ch^2 \|f\|_{L^2(\Omega)}.
  \end{align}
\end{corollary}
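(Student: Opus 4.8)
The plan is to combine Céa's lemma with the global interpolation error estimate already obtained in Theorem~\ref{lem:globinterrest}, together with the standard estimates for the Lagrange interpolant on the macro-partition. Recall the splitting $u = u_I + u_R$ from \eqref{eq:splitting}, where $u_I$ is the Lagrange interpolant with respect to the macro-subdivision $\{\Lambda_\ell\}$ and $u_R$ vanishes in coupling nodes and on singular edges. The candidate competitor in Céa's lemma is $v_h := u_I + D_h u_R \in V_h$, which is admissible precisely because $D_h u_R$ preserves the homogeneous Dirichlet boundary condition and vanishes on the coupling nodes and singular edges, so it can be added to the nodal data of $u_I$ consistently.

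First I would write, via \eqref{eq:cea},
\[
  \|u-u_h\|_{H^1(\Omega)} \le C\,\|u-v_h\|_{H^1(\Omega)}
  = C\,\|u_R - D_h u_R\|_{H^1(\Omega)},
\]
since $u - v_h = (u_I + u_R) - (u_I + D_h u_R) = u_R - D_h u_R$. Then I would split the $H^1(\Omega)$-norm into a sum over the macro-elements $\Lambda_\ell$ and apply Theorem~\ref{lem:globinterrest} on each one, using that the $u_R$-part carries the singular behaviour described by Theorem~\ref{thm:regularity} and that the grading parameters satisfy \eqref{cond:mu}--\eqref{cond:munu}. This yields
\[
  \|u_R - D_h u_R\|_{H^1(\Omega)}^2
  = \sum_{\ell=1}^L |u_R - D_h u_R|_{H^1(\Lambda_\ell)}^2
  \le C h^2 \sum_{\ell=1}^L \|f\|_{L^2(\Lambda_\ell)}^2
  = C h^2 \|f\|_{L^2(\Omega)}^2,
\]
which already gives \eqref{est:globH1}. (One should note that the $L^2$ part of the $H^1$-norm of $u_R - D_h u_R$ is controlled by the $H^1$-seminorm through a Poincaré/Friedrichs argument on $\Omega$, since $u_R - D_h u_R$ vanishes on $\partial\Omega$; alternatively it is absorbed in the same macro-element estimates.) The $L^2$-estimate \eqref{est:globL2} then follows by the Aubin--Nitsche duality argument: for $g \in L^2(\Omega)$ let $w$ solve $-\Delta w = g$ in $\Omega$, $w = 0$ on $\partial\Omega$; applying \eqref{est:globH1} to $w$ (whose right-hand side is $g \in L^2(\Omega)$) gives $\|w - w_h\|_{H^1(\Omega)} \le C h \|g\|_{L^2(\Omega)}$, and the standard duality computation $\|u-u_h\|_{L^2(\Omega)} = \sup_g (u-u_h, g)/\|g\|_{L^2}$ combined with Galerkin orthogonality and \eqref{est:globH1} for both $u$ and $w$ yields the $h^2$ order.

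The main obstacle is essentially bookkeeping rather than a new idea: one must verify that $v_h = u_I + D_h u_R$ is genuinely in $V_h$, i.e. that the nodal prescriptions of $u_I$ (on coupling nodes and singular edges) and of $D_h u_R$ (on all other non-coupling, non-singular-edge nodes) are compatible and together respect the homogeneous Dirichlet condition. This is exactly what the careful construction of the edges $\sigma_\node$ and the sets $S_T$ in Section~\ref{sec:regularity} was designed to guarantee — in particular the second listed property of the $S_T$ (disjointness of $S_{T_1}$ and $S_{T_2}$ across a coupling face, with $\sigma_\node$ shared on the common boundary) ensures there is no conflict. Given Theorem~\ref{lem:globinterrest}, no further local analysis is needed; the remaining work is the routine Aubin--Nitsche argument, which presents no difficulty since the dual problem again has only $L^2$ data, precisely the regularity class for which Theorem~\ref{lem:globinterrest} was proved.
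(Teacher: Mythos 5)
Your proposal is correct and follows essentially the same route as the paper: choose $v_h=u_I+D_hu_R$ in C\'ea's lemma, observe $u-v_h=u_R-D_hu_R$, invoke Theorem~\ref{lem:globinterrest} macro-element by macro-element for \eqref{est:globH1}, and use Aubin--Nitsche for \eqref{est:globL2}. Your additional remarks on the admissibility of $v_h$ and on controlling the $L^2$ part of the $H^1$-norm via Poincar\'e (using $u_R-D_hu_R\in H^1_0(\Omega)$) are sound but only make explicit what the paper leaves implicit.
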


\begin{proof}
  We choose $v_h=u_I+D_hu_R$ in estimate \eqref{eq:cea} and observe
  that $u-v_h=u_R-D_hu_R$. With Lemma \ref{lem:globinterrest} we
  obtain the estimate \eqref{est:globH1}. The $L^2$-error estimate can
  be derived by the standard Aubin--Nitsche method.
\end{proof}

\begin{remark}
  A trivial conclusion from \eqref{est:globH1} is the stability estimate
  \begin{align}
    \label{stab:H1} \|u_h\|_{H^1(\Omega)} &\le C \|f\|_{L^2(\Omega)}
  \end{align}
  which we will need in Section \ref{sec:application1}.
\end{remark}

\begin{remark}\label{rem:type5}
  In macroelements of type 4 with $\mu_\ell=\nu_\ell<1$, Apel and
  Nicaise suggested in \cite{apel:97c} the use of a more elegant
  refinement strategy as depicted in Figure \ref{fig:macro5}. 
  \begin{figure}
    \begin{center}
      \includegraphics[width=40mm]{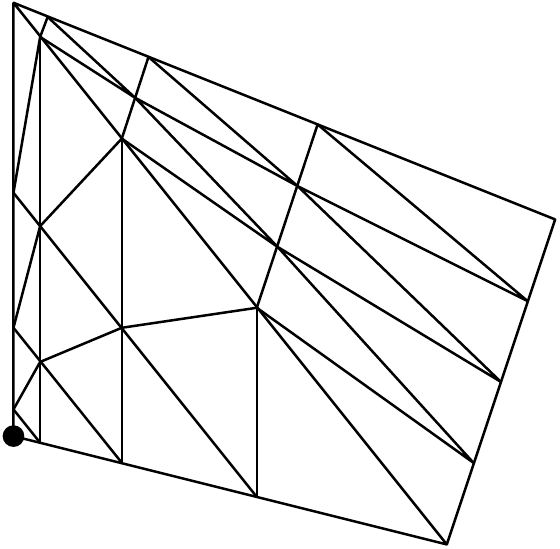}
    \end{center}
    \caption{\label{fig:macro5} Modification of macroelement of type
      4}
  \end{figure}
  Our proof cannot be transfered to this kind of mesh immediately
  since there may be elements $T$ where $S_T$ is not prismatic as it
  was exploited in the proof of Lemmas \ref{lem:stability_st1} and
  \ref{lem:stability_st2}. We conjecture that the assertion still
  holds but do not pursue this further in this paper.
\end{remark}

\setcounter{equation}{0}
\section{\label{sec:tests}Numerical test}

As in \cite{apel:97c} we consider the Poisson problem
\eqref{eq:poissonproblem} in the ``Fichera domain'' $\Omega:=(-1,1)^3
\setminus [0,1]^3$ and choose the right-hand side
$f=1+R^{-3/2}\ln^{-1}(R/4)$ which is in $L^2(\Omega)$ but not in
$L^p(\Omega)$ for $p>2$. For this problem we have
$\lambda_v\approx0.45$ for the concave vertex \cite{schmitz:93} and
$\lambda_e=\frac{\pi}{\omega_0}=\frac{2}{3}$ for the three concave
edges. All other edges and vertices are non-singular.

This boundary value problem was solved on quasi-uniform  and on
graded meshes with our refinement strategy using
$\mu=\nu=0.5<\min\{\lambda_e,\lambda_v+\frac12\}$, where types 1, 2
and 4 occur. Additionally we include the strategy where the macros of
type 4 are replaced by type 5, compare Remark \ref{rem:type5}.
Pictures of such meshes can be found in \cite{apel:97c}. The
refinement strategies and an a posteriori error estimator of residual
type \cite{siebert:93} were implemented into the finite element
package MooNMD \cite{john:04}. The estimated error norms are plotted
against the number of unknowns in Figure \ref{fig:diagram}.
\begin{figure}[tb]
  \begin{center}
    \includegraphics[width=0.85\textwidth]{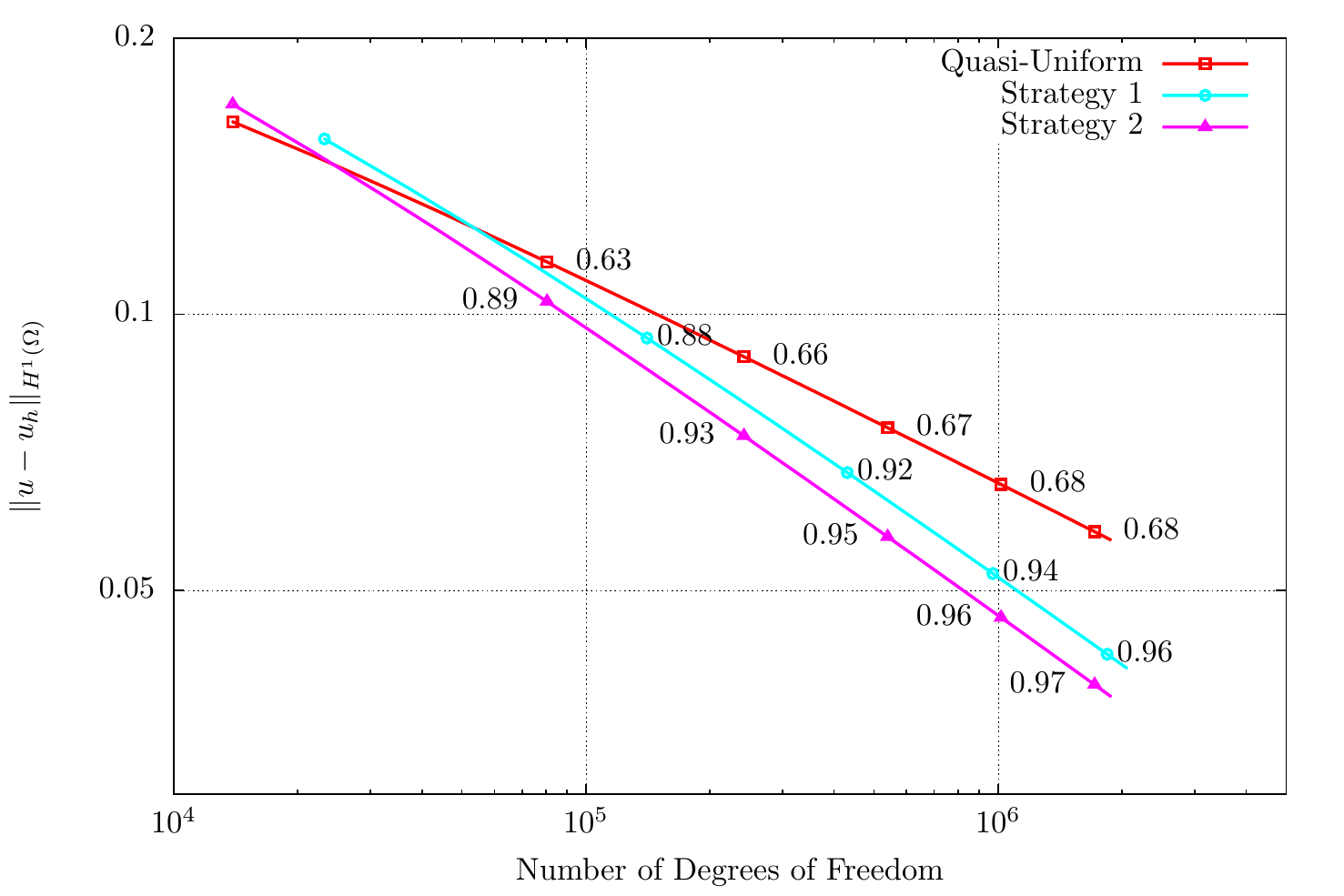}
  \end{center}
  \caption{\label{fig:diagram}Plot of the estimated error against the
    number of unknowns. The labels at the curve denote the estimated
    convergence order in terms of $h\sim N^{-1/3}$.}
\end{figure}
We see that the theoretical approximation order $h^1\sim N^{-1/3}$
from Corollary \ref{cor:fe_errest} can be verified in the practical
calculation for both refinement strategies.  The error with the second
strategy is slightly smaller. We denoted by $N$ the
number of nodes.

\pagebreak[4]

\setcounter{equation}{0}
\section{\label{sec:application1}Discretization error estimates for a
  distributed optimal control problem}

\newcommand{\Uad}{U^{\mathrm{ad}}}

Hinze introduced the variational discretization concept for
linear-quadratic control constrained optimal control problems in
\cite{hinze:05a}. We follow here this concept in a special case.
Consider the the optimal control problem
\begin{align*}
  \min_{(y,u)\in H^1_0(\Omega)\times\Uad} J(y,u) :=
  \frac{1}{2}\|y-y_d\|^2_{L^2(\Omega)} + \frac{\alpha}{2}\|u\|_{L^2(\Omega)}^2,
\end{align*}
where the state $y\in H^1_0(\Omega)$ is the weak solution of the
Poisson problem
\begin{equation}\label{eq:poissonproblem_ocp}
  -\Delta y=u \quad\mbox{in }\Omega, \qquad
  y=0 \quad\mbox{on }\partial\Omega,
\end{equation}
and the control $u$ is constrained by constant bounds $a_a,u_b\in\R$,
this means that the set of admissible controls is defined by
\[
\Uad:=\{u\in L^2(\Omega): u_a\le u\le u_b \text{ a.e. }\Omega\}.
\]
The regularization parameter $\alpha$ is a fixed positive number and
$y_d\in L^2(\Omega)$ is the desired state. It is well known that this
problem has a unique optimal solution $(\bar y, \bar u)$. There is an
optimal adjoint state $\bar p\in H^1_0(\Omega)$, and the triplet
$(\bar y, \bar u,\bar p)$ satisfies the first order optimality
conditions
\begin{alignat*}{2}
  (\nabla\bar y,\nabla v)_{L^2(\Omega)} &= (\bar u,v)_{L^2(\Omega)}
  \quad&&\forall v\in H^1_0(\Omega), \\
  (\nabla\bar p,\nabla v)_{L^2(\Omega)} &= (\bar y-y_d,v)_{L^2(\Omega)}
  \quad&&\forall v\in H^1_0(\Omega), \\
  (\alpha\bar u+\bar p,u-\bar u)_{L^2(\Omega)} &\ge 0
  \quad&&\forall u\in\Uad.
\end{alignat*}
With the variational discretization concept the approximate solution is
obtained by replacing $H^1_0(\Omega)$ by a finite element space
$V_h\subset H^1_0(\Omega)$ and searching $(\bar y_h, \bar u_h,\bar
p_h)\in V_h\times \Uad\times V_h$ such that
\begin{alignat*}{2}
  (\nabla\bar y_h,\nabla v_h)_{L^2(\Omega)} &= (\bar u_h,v_h)_{L^2(\Omega)}
  \quad&&\forall v_h\in V_h, \\
  (\nabla\bar p_h,\nabla v_h)_{L^2(\Omega)} &= (\bar y_h-y_d,v_h)_{L^2(\Omega)}
  \quad&&\forall v_h\in V_h, \\
  (\alpha\bar u_h+\bar p_h,u-\bar u_h)_{L^2(\Omega)} &\ge 0
  \quad&&\forall u\in\Uad.
\end{alignat*}
Note that the control space is not discretized; nevertheless $\bar
u_h$ can be obtained by the projection of $-\bar p_h/\alpha$ onto $\Uad$,
see \cite{hinze:05a}. The discretization error estimate
\begin{align*}
  \|\bar u - \bar u_h \|_{L^2(\Omega)} \!+\! 
  \|\bar y - \bar y_h \|_{L^2(\Omega)} \!+\! 
  \|\bar p - \bar p_h \|_{L^2(\Omega)} &\leq Ch^2
  \left(\|\bar u\|_{L^2(\Omega)} \!+\! \|y_d\|_{L^2(\Omega)} \right)
\end{align*}
can be concluded from \eqref{est:globL2} and \eqref{stab:H1}, see
\cite{hinze:05a,apel:10a}. With the proof of Corollary
\ref{cor:fe_errest} we have established this result for anisotropic
discretizations of the state equation \eqref{eq:poissonproblem_ocp} in
the case of three-dimensional polyhedral domains.

\section{\label{sec:application2}Discrete compactness property for
  edge elements}

The Discrete Compactness Property  is a
useful tool to study the convergence of finite element
discretizations of the Maxwell equations, both for eigenvalue and
source problems. It was first introduced by Kikuchi
\cite{Kikuchi:89} and proved for N\'ed\'elec edge elements of
lowest order on tetrahedral shape regular meshes. We refer to the
monograph by Monk \cite{Monk:03} and the references therein for
further analysis on isotropic meshes. The property was also
analyzed on anisotropically refined tetrahedral meshes on
polyhedra for edge elements of lowest order by Nicaise
\cite{Nicaise:01} (excluding corner singularities) and by Buffa,
Costabel, and Dauge \cite{Buffa_Costabel_Dauge:05}.

Lombardi \cite{Lombardi:13} extended this result to edge elements
of arbitrary order, also including corners and edge singularities.
The proof is based on two tools: 1) interpolation error estimates
for edge elements on meshes satisfying the maximum angle
condition, and 2) interpolation error estimates for a piecewise
linear interpolation operator defined on $W^{2,p}(\Omega)\cap
H^1_0(\Omega)$, $p\ge2$, preserving boundary conditions. For the
latter, the Lagrange interpolation was used (implying $p>2$)
together the results of Apel and Nicaise \cite{apel:97c}, giving
some artificial restrictions on the grading parameters defining
the allowed anisotropically graded meshes. Using now estimate
\eqref{est:globH1} of Corollary \ref{cor:fe_errest} we can extend
the result of \cite{Lombardi:13} allowing little more general
meshes.

In what follows we define a family of edge element spaces and
introduce the DCP for this family. We refer to \cite{Lombardi:13}
for further definitions and notation.
First we introduce the divergence-free space
\[
X=\left\{{\bf v}\in H_0({\bf curl},\Omega): \mbox{div\,} {\bf v}=0\mbox{ on }
\Omega\right\}.
\]
Then we introduce discretizations of this space where the
divergence-free condition is weakly imposed. Let $\mathrm I$ be a
denumerable set of positive real numbers having $0$ as the only limit
point. From now till the end of this section, we assume that $h\in
\mathrm I$. For each $h$, let $\mathcal{T}_h$ be the mesh on the
polyhedron $\Omega$ constructed in Section \ref{sec:regularity}.
Given an integer $k\ge1$, let $X_h$ be the space defined as
\[
X_h=\left\{{\bf v}_h\in H_0({\bf curl},\Omega): {\bf v}_h|_T\in
\mathcal N_k(T)\,\forall T\in \mathcal T_h, (\nabla p_h,{\bf
v}_h)=0\,\forall p_h\in S_h\right\}
\]
where $\mathcal N_k(T)$ is the space of edge elements of order $k$
on $T$, and
\[
S_h=\left\{p_h\in H^1_0(\Omega): p_h|_T\in \mathcal P_k(T)\,
\forall T\in\mathcal T_h\right\}.
\]
We say that the family of spaces $\{X_h\}_{h\in\mathrm I}$
satisfies the discrete compactness property if for each sequence
$\{{\bf v}_h\}_{h\in\mathrm J}$, $\mathrm J\subseteq \mathrm I$,
verifying for a constant $C$
\begin{align*}
& {\bf v}_h\in X_h,\qquad \forall h\in\mathrm J,\\
& \|{\bf v}_{h}\|_{H_0({\bf curl},\Omega)}\le C, \quad \forall h\in\mathrm
J,
\end{align*}
there exists a function ${\bf v}\in X$ and a subsequence
$\{{\bf v}_{h_n}\}_{n\in\mathbb N}$ such that (for $n\to\infty$)
\begin{align*}
{\bf v}_{h_{n}}&\to {\bf v}\quad\mbox{in }L^2(\Omega)
\\ {\bf v}_{h_n}&\rightharpoonup {\bf v} \quad \mbox{weakly in }H_0({\bf curl},\Omega).
\end{align*}

\begin{theorem}
If the grading parameters defining the meshes $\mathcal T_h$
satisfy the conditions \eqref{cond:mu}--\eqref{cond:munu}, then
the family of spaces $\{X_h\}_{h>0}$ verifies the discrete
compactness property.
\end{theorem}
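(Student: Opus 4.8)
The plan is to follow the structure of Lombardi's proof in \cite{Lombardi:13}, replacing the one ingredient there that forced the restrictive hypothesis $f\in L^p(\Omega)$, $p>2$, and hence the artificial restrictions on $\mu_\ell,\nu_\ell$. Recall that in \cite{Lombardi:13} the discrete compactness property for $\{X_h\}$ is reduced to two approximation statements, both independent of the grading: (i) interpolation error estimates for the N\'ed\'elec edge element interpolant, valid on any tetrahedral family satisfying the maximum angle condition, hence on the meshes of Section \ref{sec:regularity} for every choice of $\mu_\ell,\nu_\ell$; and (ii) a first-order estimate, in the $H^1(\Omega)$-seminorm, for the approximation by functions of $S_h$ of a scalar function $w\in H^1_0(\Omega)$ whose Laplacian lies in $L^2(\Omega)$. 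Since $V_h\subset S_h$ and $u_I+D_hu_R$ preserves the homogeneous boundary condition, ingredient (ii) is exactly Corollary \ref{cor:fe_errest} (through Theorem \ref{lem:globinterrest}): if $w\in H^1_0(\Omega)$ solves $-\Delta w=g\in L^2(\Omega)$, then $\inf_{p_h\in S_h}\|w-p_h\|_{H^1(\Omega)}\le\|w-(w_I+D_hw_R)\|_{H^1(\Omega)}\le Ch\|g\|_{L^2(\Omega)}$, and this is available precisely under \eqref{cond:mu}--\eqref{cond:munu}. In \cite{Lombardi:13} the Lagrange interpolant was used for (ii), which by \cite{apel:92} needs $W^{2,p}$-regularity with $p>2$; using $D_h$ instead removes this defect.

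With these two ingredients the argument proceeds in the standard way. Let $\{{\bf v}_h\}_{h\in\mathrm J}$ satisfy ${\bf v}_h\in X_h$ and $\|{\bf v}_h\|_{H_0({\bf curl},\Omega)}\le C$. By weak compactness we extract a subsequence with ${\bf v}_{h_n}\rightharpoonup{\bf v}$ in $H_0({\bf curl},\Omega)$; testing the constraint $({\bf v}_{h_n},\nabla p_{h_n})=0$ with $p_{h_n}$ a suitable $S_{h_n}$-approximation of a smooth function $\varphi$, for which $\nabla p_{h_n}\to\nabla\varphi$ in $L^2(\Omega)$, and passing to the limit gives $({\bf v},\nabla\varphi)=0$ for all smooth $\varphi$, so $\mathrm{div}\,{\bf v}=0$ and ${\bf v}\in X$. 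To upgrade to strong convergence in $L^2(\Omega)$ one uses a regular/gradient splitting of ${\bf v}_{h_n}$ coming from the regularity theory for $H_0({\bf curl},\Omega)$ on the polyhedron: a divergence-free part bounded in a space compactly embedded in $L^2(\Omega)$, hence precompact there, plus gradients of scalar potentials in $H^1_0(\Omega)$. Ingredient (i) interpolates the regular part into the edge element space with $L^2$-error tending to zero, and ingredient (ii) approximates the scalar potentials in $S_{h_n}$; combined with the discrete de Rham relation that $\nabla S_{h_n}$ is contained in the edge element space, this shows that the gradient contribution to ${\bf v}_{h_n}$ tends to $0$ in $L^2(\Omega)$. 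Hence ${\bf v}_{h_n}\to{\bf v}$ in $L^2(\Omega)$ — the limit being identified with ${\bf v}$ by uniqueness of the continuous Helmholtz decomposition of the divergence-free field ${\bf v}$ — together with ${\bf v}_{h_n}\rightharpoonup{\bf v}$ weakly in $H_0({\bf curl},\Omega)$, which is the assertion.

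Estimate \eqref{est:globH1} enters only through ingredient (ii), so no genuinely new obstacle arises: once Corollary \ref{cor:fe_errest} is in hand, the theorem follows by substituting it for the Lagrange-interpolation estimate in the proof of \cite{Lombardi:13}. The real work was the construction of $D_h$ and the proof of that corollary; what remains is bookkeeping — checking that $V_h\subset S_h$, that \eqref{cond:mu}--\eqref{cond:munu} are precisely the conditions under which the $L^2$-data estimate holds, and that all other steps of \cite{Lombardi:13} are insensitive to $\mu_\ell,\nu_\ell$, which they are since they depend only on ingredient (i) and on domain regularity. As in Remark \ref{rem:type5}, the meshes considered are those of Section \ref{sec:regularity}; the more elegant modified type~4 refinement is not covered because there the auxiliary sets $S_T$ need not be prismatic.
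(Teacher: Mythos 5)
Your proposal is correct and follows essentially the same route as the paper: the paper's proof simply invokes the argument of Theorem 5.2 in \cite{Lombardi:13}, observing that the scalar approximation inequality (4.21) there — your ingredient (ii) — is now supplied by estimate \eqref{est:globH1} under the conditions \eqref{cond:mu}--\eqref{cond:munu}, while the edge-element interpolation estimates are unaffected by the grading. Your write-up merely makes explicit the structure of Lombardi's argument that the paper leaves implicit.
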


\begin{proof}
   Follow exactly the arguments used to prove Theorem 5.2
   of~\cite{Lombardi:13} taking into account that the inequality (4.21)
   of that paper is now a consequence of  estimate
   \eqref{est:globH1}.
\end{proof}

\begin{appendix}
\setcounter{equation}{0}
\section{Proof of trace inequalities}

\begin{lemma} \label{lem:trace:prism} Let $P$ be a triangular prism
  with vertices $v_i$, $i=1,\ldots,6$, where the face $v_1v_2v_3$ is
  opposite to the face $v_4v_5v_6$, and where the edges $v_1v_4$,
  $v_2v_5$, and $v_3v_6$ are parallel to the $x_3$-axis,
  see Figure \ref{fig:prism}. 
  \begin{figure}
    \begin{center}
      \includegraphics[height=4.5cm]{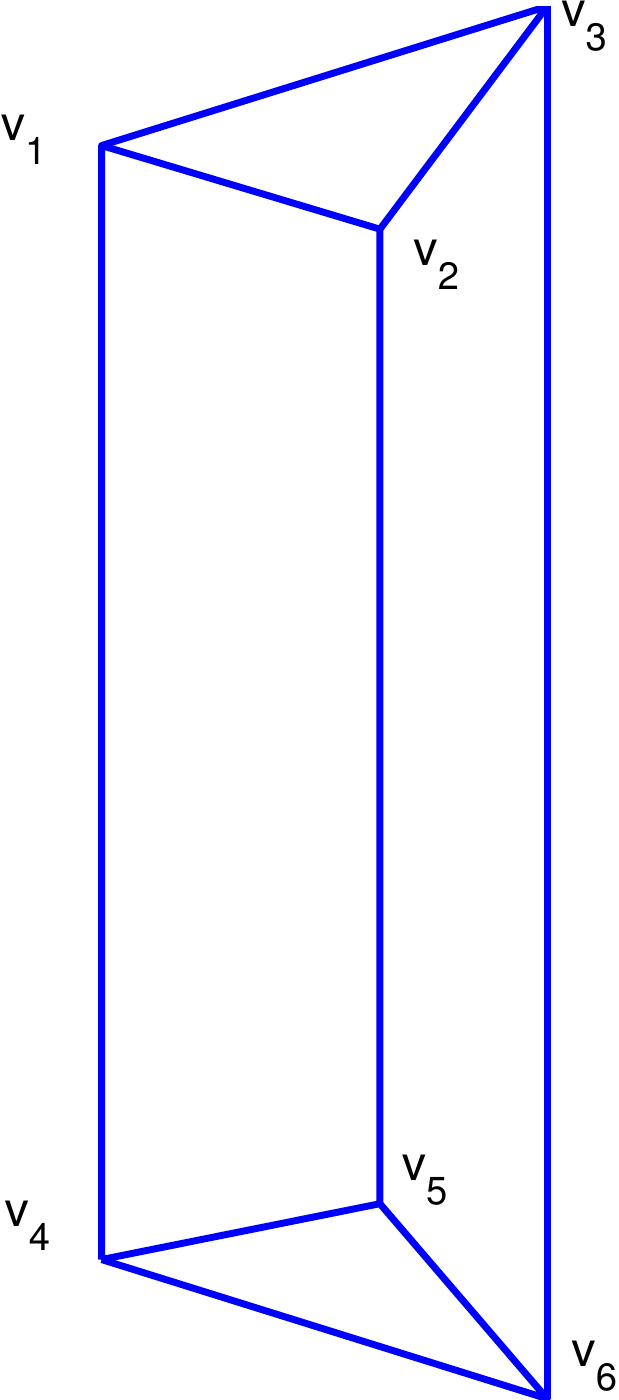}
    \end{center}
    \caption{\label{fig:prism}Illustration of the prism}
  \end{figure}
  Denote by $F$ the face $v_1v_2v_3$.
  Then for all $v\in W^{1,p}(P)$, $p\in[1,\infty)$, we have
  \[
    \|v\|_{L^p(F)}^p\le \frac{C_{reg}}{\cos\gamma} \cdot h_3^{-1}
    \left(\|v\|_{L^p(P)}^p + h_3^p\|\partial_3v\|_{L^p(P)}^p\right),
  \]
  where $h_3$ is length of the shortest vertical edge, and $\gamma$ is
  the angle between the $x_1x_2$-plane and the plane containing the
  face $F$. The constant $C_{reg}$ depends only on the minimum angle
  of the face $F$.
\end{lemma}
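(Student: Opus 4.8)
The plan is to reduce the estimate on the slanted face $F$ to a standard one-dimensional trace estimate along the vertical direction, then integrate over the projection of $F$ onto the $x_1x_2$-plane. First I would introduce the parametrization of $P$ by its projection $\widehat F$ (the triangle $v_1v_2v_3$ projected onto the $x_1x_2$-plane, which equals the projection of $v_4v_5v_6$ by the vertical-edge assumption): for each $\widehat x\in\widehat F$ the fiber $P\cap(\{\widehat x\}\times\R)$ is a vertical segment $[a(\widehat x),b(\widehat x)]$ of length $\ell(\widehat x)\ge h_3$, and the face $F$ meets this fiber at the single point $(\widehat x, g(\widehat x))$ where $g$ is affine (since $F$ is planar). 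The surface measure on $F$ satisfies $\mathrm dS = (\cos\gamma)^{-1}\,\mathrm d\widehat x$ because $\gamma$ is exactly the angle between $F$ and the $x_1x_2$-plane.

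The key step is the one-dimensional trace inequality: for $w\in W^{1,p}(a,b)$ and any $t_0\in[a,b]$, writing $|w(t_0)|^p \le \frac{2^{p-1}}{b-a}\int_a^b|w(s)|^p\,\mathrm ds + 2^{p-1}(b-a)^{p-1}\int_a^b|w'(s)|^p\,\mathrm ds$, which follows from $w(t_0)^p = w(s)^p + \int_s^{t_0}(w^p)'$, the elementary inequality $|x+y|^p\le 2^{p-1}(|x|^p+|y|^p)$, Hölder, and averaging over $s\in[a,b]$. Applying this fiberwise to $w(\cdot)=v(\widehat x,\cdot)$ with $t_0=g(\widehat x)$ and $b-a=\ell(\widehat x)$, then multiplying by $(\cos\gamma)^{-1}$ and integrating over $\widehat F$, gives
\[
  \|v\|_{L^p(F)}^p \le \frac{2^{p-1}}{\cos\gamma}\left(\frac{1}{h_3}\|v\|_{L^p(P)}^p + (\operatorname{diam} P)^{p-1}\,\|\partial_3 v\|_{L^p(P)}^p\right),
\]
where I have bounded $1/\ell(\widehat x)\le 1/h_3$ and $\ell(\widehat x)^{p-1}$ by its maximum over $\widehat F$. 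Finally I would absorb $(\operatorname{diam} P)^{p-1}$ into the claimed form $h_3^{-1}h_3^p = h_3^{p-1}$: since $P$ is a prism whose triangular cross-section $F$ has its minimum angle bounded below, the in-plane extent of $F$ is controlled by its longest edge, and the regularity of $F$ together with the length $h_3$ of the shortest vertical edge yields $\operatorname{diam} P \le C_{reg}\, h_3$ — this is where the shape-regularity constant $C_{reg}$ depending only on the minimum angle of $F$ enters. Collecting the constants gives the asserted bound.

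The routine parts are the 1D trace lemma and the change-of-variables bookkeeping; the one point that requires care is the absorption step, i.e. justifying $\operatorname{diam} P \le C_{reg}\,h_3$ from the hypotheses. Here one uses that $F$ has bounded minimum angle so all three edges of $F$ are comparable, that $F$ is "opposite" to $v_4v_5v_6$ with the three connecting edges vertical so $P$ fits in a vertical box of height comparable to $h_3$ over $F$, and hence $\operatorname{diam} P$ is comparable to $\operatorname{diam} F$ times a constant depending on $\gamma$ and the minimum angle — which is exactly the content of $C_{reg}/\cos\gamma$ in the statement. Once this geometric estimate is in hand, the inequality follows immediately by combining it with the integrated 1D trace bound above.
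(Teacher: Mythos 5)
Your fiberwise reduction and the surface-measure factor $(\cos\gamma)^{-1}$ are fine, but the absorption step at the end is a genuine gap: the claim $\operatorname{diam} P\le C_{reg}\,h_3$ with $C_{reg}$ depending only on the minimum angle of $F$ is false under the hypotheses of the lemma. Take $F$ an equilateral triangle of side $1$ in the $x_1x_2$-plane and $v_{i+3}=v_i+(0,0,\varepsilon)$: then the minimum angle of $F$ is $\pi/3$, $\cos\gamma=1$, $h_3=\varepsilon$, yet $\operatorname{diam}P\ge1$. Nothing in the statement ties the in-plane size of $F$ (or the longest lateral edge) to the \emph{shortest} lateral edge $h_3$, so your bound $\ell(\widehat x)^{p-1}\le(\operatorname{diam}P)^{p-1}\le (C_{reg}h_3)^{p-1}$ does not hold, and what you actually prove is the weaker estimate with $(\operatorname{diam}P)^{p-1}$ in front of the gradient term. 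This matters: the lemma is invoked for prismatic patches $S_T$ whose cross-section diameter $h_{1,T}$ and axial length $h_{3,T}$ are \emph{not} assumed comparable, so the sharp $h_3^{p-1}$ cannot be waved away.

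The fix is small and turns your argument into essentially the paper's proof. The trace point $t_0=g(\widehat x)$ is the \emph{endpoint} of each fiber, and every fiber has length $\ell(\widehat x)=\sum_i\lambda_i\,|v_iv_{i+3}|\ge h_3$ (an affine combination of the lateral edge lengths with nonnegative barycentric weights). Hence you may apply the one-dimensional trace inequality on the subinterval $[g(\widehat x),\,g(\widehat x)+h_3]$ only, which yields $h_3^{-1}$ and $h_3^{p-1}$ directly, with no geometric comparability needed; the union of these subintervals is a sheared right prism of height $h_3$ contained in $P$. The paper does exactly this, phrased via a unit-determinant shear $B$ that flattens $F$ into the $x_1x_2$-plane followed by the standard trace inequality on the right sub-prism $\widetilde Q=\widetilde F\times[0,h_3]\subseteq\widetilde P$; your direct fiber integration is an equivalent, coordinate-free version of the same computation once the averaging window is restricted to length $h_3$.
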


\begin{proof}
  We can assume $v_1=(0,0,0)$ and $v_4=(0,0,h_3)$. Suppose
  $v_2=(a_2,b_2,c_2)$, $v_3=(a_3,b_3,c_3)$. Let $s,t$ such that
  \begin{eqnarray*}
    a_2s+b_2t&=&c_2\\
    a_3s+b_3t&=&c_3.
  \end{eqnarray*}
  It is clear that there exist such $s$ and $t$ since $v_1$, $v_2$,
  and $v_3$ do not lay on one line. Then the map $f(\tilde x)=B\tilde
  x$ with
  \[
  B=\left(\begin{array}{ccc}1&0&0\\0&1&0\\s&t&1\end{array}\right)
  \]
  sends $\widetilde P$ to $P$ where $\widetilde P$ is a prism with
  three vertical edges and some of its vertices are $\tilde
  v_1=(0,0,0), \tilde v_2=(a_2,b_2,0),\tilde v_3=(a_3,b_3,0)$ and
  $\tilde v_4=(0,0,h_3)$. Let $\widetilde F$ be the face $\tilde
  v_1\tilde v_2\tilde v_3$ of $\widetilde P$.

  Let $\tilde v$ be defined by $\tilde v(\tilde x)=v(x)$ if $x=B\tilde
  x$. Then we have
  \[
    \|v\|_{L^p(F)}^p=\frac1{\cos\gamma}
    \|\tilde v\|_{L^p(\widetilde F)}^p.
  \]
  Now, if $\widetilde Q$ is the right prism with vertices $\tilde
  v_1,\ldots,\tilde v_4$, $(a_2,b_2,h_3)$ and $(a_3,v_3,h_3)$, then we
  have using a trace inequality on $\widetilde Q$ and noting that
  $\widetilde Q\subseteq\widetilde P$ that
  \begin{align*}
    \|\tilde v\|_{L^p(\widetilde F)}^p &\le C_{p}h_3^{-1} \left(
      \|\tilde v\|_{L^p(\widetilde Q)}^p + h_3^p
      \|\tilde\partial_3\tilde v\|_{L^p(\widetilde Q)}^p \right)\\
    &\le C_{p}h_3^{-1} \left( \|\tilde v\|_{L^p(\widetilde P)}^p +
      h_3^p \|\tilde\partial_3\tilde v\|_{L^p(\widetilde P)}^p\right)
  \end{align*}
  with $C_{p}$ depending only on $p$.
  Therefore, we have
  \begin{align*}
    \|v\|_{L^p(F)}^p &= \frac{C_{p}}{\cos\gamma} h_3^{-1} \left(
      \|\tilde v\|_{L^p(\widetilde P)}^p + h_3^p
      \|\tilde\partial_3\tilde v\|_{L^p(\widetilde P)}^p \right) \\ &=
    \frac{C_{reg}}{\cos\gamma} h_3^{-1} |B| \left( \|v\|_{L^p(P)}^p +
      h_3^p \|\partial_3v\|_{L^p(P)}^p \right)
  \end{align*}
  where we used that $\tilde\partial_3 \tilde v(\tilde x) = \partial_3
  v(x)$. Since $|B|=1$ we obtain the desired result.
\end{proof}

\begin{lemma}\label{lem:trace_face}
  Let $T$ be an anisotropic element with the node $n$ on the singular
  edge and let $\sigma_{n}$ be a short edge. Let $P_{n}\subset
  \overline{S_T}$ be a parallelogram of maximal area having
  $\sigma_{n}$ as an edge and another edge on the singular edge, see
  Figure \ref{fig:FnPn}. And let $F_{n}$ the face of $S_T$ containing
  $P_{n}$. Then $|P_{n}|\ge C|F_{n}|$, and for all $v\in W^{1,1}(S_T)$
  we have
\begin{align*}
\|v\|_{L^1(P_{n})}  
& \le C |F_{n}||S_T|^{-1}
\left(\|v\|_{L^1(S_T)} +
|s_{1,T}|\|\partial_{s_{1,T}}v\|_{L^1(S_T)} +
|s_{2,T}|\|\partial_{s_{2,T}}v\|_{L^1(S_T)}\right).
\end{align*}
where $s_{1,T}$ and $s_{2,T}$ are two short edges of $T$.
\end{lemma}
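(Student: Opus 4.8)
The plan is to reduce both claims to geometric facts about $S_T$ together with an elementary one-dimensional trace argument. Since $T$ is a needle element touching the singular edge, it lies in a macroelement of type~$3$ or~$4$, so $S_T$ is a triangular prism whose generators are parallel to the $x_3$-axis (the direction of the singular edge), with two lateral faces parallel to that axis — one of them being $F_n$ — and with the remaining two faces lying in two consecutive planes of $\mathcal P_\ell$. Because consecutive strips of the refinement have comparable thickness, all edges of $S_T$ parallel to the $x_3$-axis have length comparable to $h_{3,T}$; and since $S_T$ is a bounded union of shape-regular cross-sectional triangles of diameter $\sim h_{1,T}=h_{2,T}$ and no element crosses a plane of $\mathcal P_\ell$, rescaling the coordinates by $\mathrm{diag}(h_{1,T},h_{1,T},h_{3,T})^{-1}$ turns $S_T$, the face $F_n$, the short edges $s_{1,T},s_{2,T}$ and the parallelogram $P_n$ into objects of uniformly bounded shape and size. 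In particular $|s_{1,T}|\sim|s_{2,T}|\sim h_{1,T}$, $|F_n|\sim h_{1,T}h_{3,T}$, $|S_T|\sim h_{1,T}^2h_{3,T}$, and hence $|F_n|\,|S_T|^{-1}\sim h_{1,T}^{-1}$.

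For the claim $|P_n|\ge C|F_n|$ I would argue directly. The face $F_n$ is a trapezoid whose two parallel sides are parallel to the $x_3$-axis and have length comparable to $h_{3,T}$ (one is a segment of the singular edge, the other the opposite edge of $S_T$ parallel to that axis, of comparable length by the bounded grading), while its two legs — one of which is $\sigma_n$ — have length comparable to $h_{1,T}$; therefore $|F_n|\sim h_{1,T}h_{3,T}$. The parallelogram $P_n$ has $\sigma_n$ and a segment of the singular edge as two adjacent edges emanating from the common vertex $n$, the latter being as long as the opposite edge of $S_T$ allows, hence of length comparable to $h_{3,T}$; since the angle between $\sigma_n$ and the singular edge is bounded away from $0$ and $\pi$ by the maximum angle condition, we get $|P_n|\sim h_{1,T}h_{3,T}\sim|F_n|$, which gives both $|P_n|\ge C|F_n|$ and $|P_n|\le|F_n|$.

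For the trace inequality the decisive point is that $P_n$ is parallel to the $x_3$-axis, so its supporting plane contains the $x_3$-direction. By the maximum angle condition — the same one used to choose $s_{1,T},s_{2,T}$ in Lemma~\ref{lem:stability_st1} — the $x_3$-direction is not contained in $\mathrm{span}(s_{1,T},s_{2,T})$, so this $2$-plane and the supporting plane of $P_n$ meet transversally with an angle bounded away from $0$. One can therefore fix a unit vector $e=\alpha_1\,s_{1,T}/|s_{1,T}|+\alpha_2\,s_{2,T}/|s_{2,T}|$, with $|\alpha_1|,|\alpha_2|\le C$, that is transverse to $P_n$ and along which $S_T$ contains, from almost every point $x\in P_n$ (if necessary after splitting $P_n$ into a bounded number of sub-parallelograms to accommodate non-convex corners of the cross-section), a segment of length $\tau(x)\ge c\,h_{1,T}$ with $c>0$ fixed. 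Writing $v(x)=v(x+te)-\int_0^t\partial_e v(x+se)\,ds$ for $0\le t\le\tau(x)$, taking absolute values, averaging in $t\in[0,\tau(x)]$, integrating over $P_n$ and using Fubini (the Jacobian of $(x,t)\mapsto x+te$ being bounded above and below), one obtains
\[
  \|v\|_{L^1(P_n)}\le C\,h_{1,T}^{-1}\,\|v\|_{L^1(S_T)}+C\,\|\partial_e v\|_{L^1(S_T)}.
\]
Since $\partial_e v=\alpha_1\,\partial_{s_{1,T}}v+\alpha_2\,\partial_{s_{2,T}}v$ in the sense of unit-vector directional derivatives, and since $|s_{1,T}|\sim|s_{2,T}|\sim h_{1,T}$ and $|F_n|\,|S_T|^{-1}\sim h_{1,T}^{-1}$, this is exactly the asserted estimate; the absence of a $\partial_3$-term is precisely due to $e\in\mathrm{span}(s_{1,T},s_{2,T})$.

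I expect the main obstacle to be the first paragraph, namely verifying rigorously — from the mesh construction of \cite{apel:97c}, in particular from the facts that no element crosses a plane of $\mathcal P_\ell$ and that consecutive strips have comparable thickness — that after the anisotropic rescaling $S_T$ has uniformly bounded shape, so that all constants below are independent of $T$ and of $h$. A second, lighter point is the geometric bookkeeping in the trace step: one must ensure that the transverse direction $e$ can be chosen in $\mathrm{span}(s_{1,T},s_{2,T})$ and at the same time so that the exit length from $S_T$ is $\ge c\,h_{1,T}$ uniformly over $P_n$ — this is where the maximum angle condition and the shape regularity of the cross-sectional polygon are used, and where the bounded subdivision of $P_n$ may be needed.
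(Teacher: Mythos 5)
Your reading of the geometry is right, and the two easy parts of your argument match the paper's: $|P_n|\ge C|F_n|$ comes from the comparable lengths of the opposite sides of the trapezoid $F_n$, and the absence of a $\partial_3$-term comes from the fact that $P_n$ spans the full extent of $S_T$ in the $x_3$-direction, so the trace only has to be taken in the cross-sectional ("short") directions. The gap is in your execution of the trace step, specifically in the claim that a fixed unit vector $e\in\mathrm{span}(s_{1,T},s_{2,T})$ (or finitely many of them) gives an exit length $\tau(x)\ge c\,h_{1,T}$ from almost every $x\in P_n$. The obstruction is not at corners of the cross-sectional polygon, as you suggest, but along two entire edges of $P_n$: the edge $\sigma_n$ lies \emph{in} the bottom face of the prism $S_T$, and the opposite edge of $P_n$ touches the top face, and these two faces are contained in two \emph{non-parallel} planes of $\mathcal P_\ell$. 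From a point of $P_n$ at distance $\varepsilon$ from, say, the bottom face, a ray in direction $e$ leaves $S_T$ through that face after length $\sim\varepsilon/\langle e,\nu\rangle$ whenever $\langle e,\nu\rangle<0$ ($\nu$ the inward normal); since the two end faces are not parallel, no single $e$ can have nonnegative inward component with respect to both, and splitting $P_n$ into finitely many sub-parallelograms does not obviously help because a vector in $\mathrm{span}(s_{1,T},s_{2,T})$ is nearly tangent to the planes of $\mathcal P_\ell$ (its transverse component is only $O(h_{3,T})$), so one cannot "tilt into" $S_T$ fast enough to clear the end face within distance $h_{1,T}$ of it. Handling the thin bad strips separately is not free either, since $v$ may concentrate its $L^1(P_n)$-mass there.

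The paper's proof avoids choosing a direction altogether: it slices $P_n$ along the $x_3$-axis into segments $\sigma(z)$ parallel to $\sigma_n$, applies for each $z$ a two-dimensional edge-to-triangle trace inequality on the triangle $\xi(z)\subset S_T$ that has $\sigma(z)$ as an edge and is \emph{parallel to the bottom face}, and then stacks these estimates over $z\in[0,h_{3,P_n}]$ into the prism $Q_n=\bigcup_z\xi(z)\subset S_T$, followed by a density argument for $v\in W^{1,1}(S_T)$. Within each leaf $\xi(z)$ the end faces of $S_T$ play no role, and the edge-to-triangle trace inequality requires only shape regularity of $\xi(z)$, not a choice of transverse direction with uniform exit length. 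If you want to keep your one-dimensional ray argument, the clean fix is to perform it leaf by leaf, i.e.\ inside each $\xi(z)$ (which is exactly how the 2D trace inequality the paper invokes is proved), rather than in the three-dimensional body $S_T$; otherwise you must supply a genuinely new argument for the uniform exit length near the two end faces, which your proposal currently does not contain.
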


\begin{figure}
  \begin{center}
    \includegraphics[height=6cm]{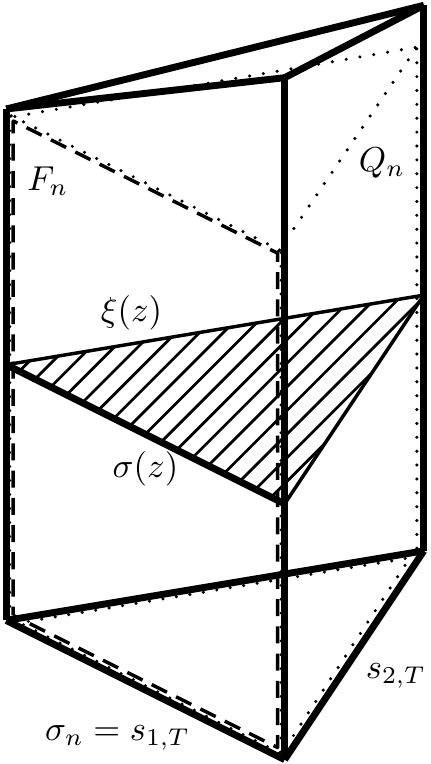}
  \end{center}
  \caption{\label{fig:FnPn}Illustration of the notation used in Lemma
    \ref{lem:trace_face}. The dotted lines indicate the prism $Q_n$,
    dashed lines the parallelogram $P_n$ while the triangle $\xi(z)$
    is hatched. Note that $\sigma(z)=\overline{\xi(z)}\cap F_n$.}
\end{figure}

\begin{proof}
  The inequality $|P_{n}|\ge C|F_{n}|$ follows from our
  assumptions on the mesh, in particular from the comparable length of
  opposite edges of $F_{n}$. For proving the estimate choose the
  coordinate system such that $n=(0,0,0)$.

  Assume first $v$ is regular. We have
  \begin{eqnarray*}
    \|v\|_{L^1(P_{n})} &\le& C
    \int_0^{h_{3,P_n}}\int_0^{|\sigma_{n}|}
    \left|v((0,0,z)+t\sigma_{n})\right|\,dt\,dz\\ &=&
    \int_0^{h_{3,P_n}} \int_{\sigma(z)}|v|\,ds\,dz
  \end{eqnarray*}
  where $\sigma(z)$ is the segment parallel to $\sigma_{n}$ and
  with the same length and passing through $(0,0,z)$. If $\xi(z)$ is 
  the triangle contained in $S_T$ having $\sigma(z)$ as an edge and
  being parallel to the bottom face of $S_T$, then since we can assume
  $v|_{\xi(z)}$ is regular (because $v$ is it), by a trace
  inequality we have
  \[
  \int_{\sigma(z)}|v| \le C \frac{|\sigma_{n}|}{|\xi|}\int_{\xi(z)}
  (|v|+|s_{1,T}||\partial_{s_{1,T}}v|+|s_{2,T}||\partial_{s_{2,T}}v|)
  \]
  where $|s_{1,T}|$ and $|s_{2,T}|$ are the lengths of two small edges
  of $T$ and $|\xi|=|\xi(0)|$. So we have
  \begin{eqnarray*}
    \|v\|_{L^1(P_{n})} &\le& C \frac{|\sigma_{n}|}{|\xi|}
    \int_0^{h_{3,P_n}} \int_{\xi(z)}
    (|v|+|s_{1,T}||\partial_{s_{1,T}}v|+|s_{2,T}||\partial_{s_{2,T}}v|)\\ &\le& C
    \frac{|F_{n}|}{|S_T|} \int_0^{h_{3,P_n}} \int_{\xi(z)}
    (|v|+|s_{1,T}||\partial_{s_{1,T}}v|+|s_{2,T}||\partial_{s_{2,T}}v|)\\ &\le & C
    \frac{|F_{n}|}{|S_T|} \int_{Q_{n}}
    (|v|+|s_{1,T}||\partial_{s_{1,T}}v|+|s_{2,T}||\partial_{s_{2,T}}v|) \\ &\le & C
    \frac{|F_{n}|}{|S_T|} \int_{S_T}
    (|v|+|s_{1,T}||\partial_{s_{1,T}}v|+|s_{2,T}||\partial_{s_{2,T}}v|)
  \end{eqnarray*}
  where $Q_{n}$ is the prism formed by the union of
  $\xi(z)$ with $z\in [0,h_{3,P_n}]$ that is contained in $S_T$.

  If $v\in W^{1,1}(S_T)$, let $\{v_k\}_k$ be a sequence of $C^\infty$
  functions converging to $v$ in $W^{1,1}(S_T)$. For each $k$ we have
  \begin{align*}
    \|v_k\|_{L^1(P_{n})} & \le C |F_{n}||S_T|^{-1} \left(\|v_k\|_{L^1(S_T)} +
      |s_{1,T}|\|\partial_{s_{1,T}}v_k\|_{L^1(S_T)} +
      |s_{2,T}|\|\partial_{s_{2,T}}v_k\|_{L^1(S_T)}\right).
  \end{align*}
  Now, the proof concludes by taking limit $k\to\infty$.
\end{proof}
\end{appendix}

\paragraph*{Acknowledgement.}
The work of all authors was supported by DFG (German Research
Foundation), IGDK 1754. The work of the second author is also
supported by ANPCyT (grant PICT 2010-1675 and PICTO 2008-00089) and by
CONICET (grant PIP 11220090100625). This support is gratefuly
acknowledged.

\bibliography{al}\bibliographystyle{plain}

\end{document}